\tikzset{vertex/.style={circle,fill=black,inner sep=1pt,outer sep=2pt},
         nvertex/.style={circle,fill=black,inner sep=0.7pt,outer sep=2pt},
         yvertex/.style={font=\small,minimum size=6pt},         
         kvertex/.style={font=\scriptsize,minimum size=6pt},
         xvertex/.style={font=\small,minimum size=8pt},
         bvertex/.style={font=\small, inner sep=1pt},
         mvertex/.style={rectangle,draw=black,thick,inner sep=2pt,outer sep=2pt},
         tvertex/.style={inner sep=1pt, font=\small},
         unvertex/.style={circle,fill=white,draw=white,inner sep=1pt},
         fill1/.style={fill=black!20,draw=black!20},
         fill2/.style={fill=black!40,draw=black!40},
         fill12/.style={fill=black!60,draw=black!60},
         >=stealth',
         leadsto/.style={-angle 90,decorate,decoration=snake,very thick},
         cut/.style={decorate,decoration=saw,very thick}}
\newcommand{\replacevertex}[3][fill=white,draw=white]
 {  \node at #2 [#1,circle,inner sep=1pt] {};
  \node #2 at #2 #3;
 }
\newtheorem*{theorem*}{Theorem}
\newtheorem{theorem}{Theorem}
\theoremstyle{plain}
\newtheorem{corollary}[theorem]{Corollary}
\newtheorem{definition}[theorem]{Definition}
\newtheorem{lemma}[theorem]{Lemma}
\newtheorem{proposition}[theorem]{Proposition}
\theoremstyle{definition}
\newtheorem{exmp}{Example}[section]
\renewcommand{\mod}{\operatorname{mod}\nolimits}
\newcommand{\ant}{\ensuremath{\widetilde{A}_n}}
\newcommand{\smod}{\ensuremath{\underline{\mathrm{mod}}~\hat{\Lambda}}}
\newcommand{\kbp}[1][\Lambda]{\ensuremath{\mathcal{K}^b(\operatorname{proj} #1)}}
\newcommand{\zainfinf}{\ensuremath{\mathbb{Z}A_{\infty}^{\infty}}}
\newcommand{\zainf}{\ensuremath{\mathbb{Z}A_{\infty}}}
\begin{document}
\title[AR-comp. of $\kbp$ for a cl.t. alg of type $\widetilde{A}$]{The
  Auslander-Reiten components of $\kbp$ for a
  cluster-tilted algebra of type $\widetilde{A}$}
\author{Kristin Krogh Arnesen}
\author{Yvonne Grimeland}

\address{Institutt for matematiske fag\\ NTNU\\ 7491 Trondheim\\ Norway}
\email{Kristin.Arnesen@math.ntnu.no}
\email{yvonne\_grimeland@hotmail.com}

\begin{abstract}
We classify the Auslander-Reiten components of $\kbp$, where $\Lambda$ is
a cluster tilted algebra of type $\widetilde{A}$. The main tool is the
combinatoric description of the indecomposable complexes in $\kbp$ via
homotopy strings and homotopy bands.
\end{abstract}

\maketitle

\section{Introduction}
The derived category of an abelian category was introduced by
Grothen\-dieck and Verdier in the early 1960s, and in the 1980s,
Happel started studying the derived category of a finite dimensional
algebra \cite{happel}.  The module category is embedded into the
derived category of the algebra, and this expansion to larger
categories provided a new tool for comparing and distinguishing the
module categories of two algebras.

Let $\Lambda$ be a finite dimensional $k$-algebra, where $k$ is an
algebraically closed field, and let $\mathrm{mod}~ \Lambda$ denote the
category of finitely generated left $\Lambda$-modules. The derived
category of $\Lambda$ is denoted by $D(\mod \Lambda)$, with suspension
functor called shift and denoted by $[1]$. Two important subcategories
is the bounded derived category, denoted by $D^b(\mod \Lambda)$, and
its subcategory $\kbp$, the bounded homotopy category of finitely
generated projective $\Lambda$-modules.  One way of describing $D(\mod
\Lambda)$ is to describe its Auslander-Reiten (hereafter abbreviated
AR) structure. In general, the subcategory $\kbp$ has AR-triangles
whenever $\Lambda$ has finite Gorenstein dimension.  In particular,
there is an explicit description of the AR-structure when $\Lambda$ is
gentle \cite{bob}.

The gentle algebras form a subclass of the special biserial
algebras, introduced by Skowro\'nski and Waschb\"{u}sch in 1983
\cite{specialbiserial}. The AR-structure of the module category of a
gentle algebra $\Lambda\cong kQ/I$, where $I$ is an admissible ideal,
can be combinatorially described in terms of $Q$ and $I$
\cite{tamebiserial}. More recently, the AR-structure of $\kbp$ has
also been given combinatorially for gentle algebras.  This was done in
2011, when Bobi\'nski gave a combinatorial algorithm for computing the
AR-triangle starting in any given indecomposable object of $\kbp$,
where $\Lambda$ is gentle \cite{bob}.

The foundation for Bobi\'nski's algorithm is diverse.  In 2003,
Bekkert and Merklen showed that the indecomposable objects of $\kbp$
are the complexes arising from so called homotopy strings and
homotopy bands (in Bobi\'nski's renaming) \cite{bekkert}.  Moreover,
Bobi\'nski takes advantage of the Happel functor, introduced by
Happel in \cite{happel}.  The Happel functor is an exact functor of
triangulated categories $\Psi \! : D^b(\mod \Lambda) \rightarrow
\underline{\mathrm{mod}}~\hat{\Lambda}$, where the latter category is
the stable module category of the repetitive algebra of
$\Lambda$.  When $\Lambda$ is gentle, the repetitive algebra
$\hat{\Lambda}$ is combinatorially described in terms of strings, see
Ringel \cite{ringel}.  Since the repetitive algebra of a gentle
algebra is special biserial, it is even possible to describe the
AR-sequences of $\smod$, using the methods of Wald and Waschb\"usch
\cite{tamebiserial}. 

In its original appearance, the Happel functor is rather abstract and
no explicit construction is given.  Nevertheless, some of its
properties are quite remarkable: It is always full and faithful, and
if $\Lambda$ is of finite global dimension, it is a
triangle-equivalence.  It also extends the inclusion functor embedding
$\mathrm{mod}~\Lambda$ into $\smod$.  By using all the known structure
of $\Lambda$, $\mathrm{mod}~\Lambda$, $\smod$ and $\kbp$ when
$\Lambda$ is gentle, Bobi\'nski constructed a formula for the Happel
functor in the gentle case.

Special classes of gentle algebras include some \emph{cluster-tilted
  algebras}.  The cluster-tilted algebras were introduced in 2007 by
Buan, Marsh and Reiten \cite{BMR}.  In 2010, Assem, Br\"ustle,
Charbonneau-Jodoin and Plamondon showed that cluster-tilted algebras
of type $A$ and $\widetilde{A}$ are gentle \cite{abcp}.  The mutation
classes of type $\widetilde{A}$ and the derived equivalences between
cluster-tilted algebras of type $\widetilde{A}$ are described by
Bastian \cite{bastian}.  

Also worth noting is the derived invariant described by
Avella-Alaminos and Geiss \cite{diana}. Using this invariant one can
find an upper bound for the number of AR-components containing
sequences with only one middle term.

In this paper, we classify the AR-components of $\kbp$, where
$\Lambda$ is a cluster-tilted algebra of type $\widetilde{A}$. Our
main tool is Bobi\'nski's algorithm for computing AR-triangles in
$\kbp$.  

Parallel to this paper, a paper in progress by Fedra Babaei describes
the AR-structure of $\kbp$ where $\Lambda$ is a
cluster-tilted algebra of type $A$.

The paper is organized as follows: In Section 2, we state the main
result of the paper. Section 3 is an overview of the background theory
needed.  In Section 4, we give the details needed from Bastian's
description of the mutation classes of type $\widetilde{A}$. In
Section 5, we introduce the combinatorial concepts of walks and
reductions, which we use to restate Bobi\'nski's algorithm for our
class of algebras in Section 6.  The main result is proved in Section
7.  Finally, in Section 8, we give an example.  Some technical
results are proved in the appendices.

We would like to thank the referee for very helpful comments.
\section{Main results} \label{sec:main} 

In this section, we state the main result. By a quiver, we mean a pair
$Q = (Q_0, Q_1)$ where $Q_0$ is a set of vertices and $Q_1$ is a set
of arrows, together with functions $s,t: Q_1 \rightarrow Q_0$
returning the starting and ending vertex of an arrow, respectively.

Let $Q$ be the fixed quiver given by the parameters $x$, $y$, $x'$ and
$y'$, as shown in Figure \ref{firstquiver}.  We require that at least
one of $x, y$ and at least one of $x',y'$ are non-zero.

We then define the double quiver $Q'$ to be the quiver with vertices
$Q'_0 = Q_0$ and arrows $Q'_1 = Q_1 \cup Q_1^{-1}$. Let a
\emph{homotopy string} denote a path in $Q'$ with no subpath of the
form $\alpha\alpha^{-1}$ or $\alpha^{-1}\alpha$ for any $\alpha \in
Q_1$.  By a \emph{central homotopy string}, we mean a homotopy string
both starting and ending in a vertex marked with $\vartriangle$ or
$\blacktriangle$, excluding the homotopy strings starting with the
arrows $\alpha_1$ or $\beta_1$ and the homotopy strings ending with
the arrows $\alpha_1^{-1}$ or $\beta_1^{-1}$, and the trivial homotopy
strings for the vertices marked with $\blacktriangle$. (Note that in
case one or more of the parameters are zero, then any vertex which is
adjacent to both an oriented cycle of length $3$ and an arrow
$\alpha_i$ or $\beta_j$ should be marked with $\blacktriangle$, and
that any vertex which is adjacent to two oriented cycles of length $3$
should be marked with $\vartriangle$.) A \emph{homotopy band} is a
central homotopy string starting and ending in the same vertex, with
some additional constraints (see Section \ref{subhomstring}).

\begin{figure}[h!]
\begin{minipage}{\textwidth}
\[ \scalebox{.75}{\begin{tikzpicture}
  \node (1) at (30:3cm) [yvertex] {$\vartriangle$};
  \node (2) at (60:3cm) [yvertex] {$\vartriangle$};
  \node (3) at (90:3cm) [yvertex] {$\vartriangle$};
  \node (4) at (120:3cm) [yvertex] {$\vartriangle$};
  \node (5) at (150:3cm) [yvertex] {$\vartriangle$};
  \node (6) at (180:3cm) [yvertex] {$\blacktriangle$};
  \node (7) at (210:3cm) [yvertex] {$\Box$};
  \node (8) at (240:3cm) [yvertex] {$\Box$};
  \node (9) at (270:3cm) [yvertex] {$\Box$};
  \node (10) at (300:3cm) [yvertex] {$\Box$};
  \node (11) at (330:3cm) [yvertex] {$\Box$};
  \node (12) at (360:3cm) [yvertex] {$\blacktriangle$};
  \node (t1) at (105:4.5cm) [yvertex] {$A_1$};
  \node (t2) at (165:4.5cm) [yvertex] {$A_{x}$};
  \node (t3) at (75:4.5cm) [yvertex] {$A'_1$};
  \node (t4) at (15:4.5cm) [yvertex] {$A'_{x'}$};
  \draw [->] (3)--(2);
  \draw [->] (3)--(4);
  \draw [->] (5)--(6);
  \draw [->] (6)--(7);
  \draw [->] (8)--(9);
  \draw [->] (10)--(9);
  \draw [->] (12)--(11);
  \draw [->] (1)--(12);
  \draw [dotted] (4) .. controls (135:3cm) .. (5);
  \draw [dotted] (7) .. controls (225:3cm) .. (8);
  \draw [dotted] (11) .. controls (315:3cm) .. (10);
  \draw [dotted] (2) .. controls (45:3cm) .. (1);
  \draw [->] (t1)--(3);
  \draw [->] (4)--(t1);
  \draw [->] (t2)--(5);
  \draw [->] (6)--(t2);
  \draw [->] (t3)--(3);
  \draw [->] (2)--(t3);
  \draw [->] (t4)--(1);
  \draw [->] (12)--(t4);
  \node [right] at (195:2.9cm) {$\alpha_{1}$};
  \node [above] at (260:2.95cm) { $\alpha_{y}$};
  \node [above] at (280:2.95cm) { $\beta_{y'}$};
  \node [left] at (345:3cm) { $\beta_{1}$};
\end{tikzpicture} }
\]
\end{minipage}
\caption{The quiver $Q$ given by $x$, $y$, $x'$ and $y'$.}\label{firstquiver}
\end{figure}

Let $\Lambda$ be the algebra $kQ/I$, where $I$ is the ideal generated
by all compositions of two arrows in each directed cycle of length $3$
in $Q$, and $kQ$ is the path algebra of $Q$.  This is in fact a
cluster-tilted algebra of type $\ant$, and all cluster-tilted algebras
of type $\ant$ are of this form, up to derived equivalence
\cite{bastian}. We will now describe the AR-components of $\kbp$.
First we state their types and numbers in the following theorem.  Let
$\tau$ denote the AR-translate in $\kbp$.

\begin{theorem}\label{mainresult1}
  Let $Q$ be a quiver as in Figure \ref{firstquiver}, and let $\Lambda
  = kQ/I$ where $I$ is as described above. Then the AR-quiver of
  $\kbp$ consists of:
\begin{enumerate}[(i)]
\item A class of tubes of rank one (homogeneous tubes), where up to
  shift, the tubes are parametrized by the set of pairs consisting of one
  homotopy band and one element of $k$.
\item A class of components given by the parameters $x$ and $y$.  If
  $x = 0$, we get up to shift a tube of rank $y$.  If $x >
  0$, we get $x$ components of type $\zainf$ with $\tau^{x+y} = [x]$.
\item A class of components given by the parameters $x'$ and $y'$.
  If $x' = 0$, we get up to shift a tube of rank $y'$.  If $x' > 0$,
  we get $x'$ components of type $\zainf$ with $\tau^{x'+y'} = [x']$.
  \item Up to shift, one $\zainfinf$-component containing all the stalk
    complexes corresponding to vertices marked by $\Box$ and
    $\blacktriangle$. 
  \item Up to shift, a class of $\zainfinf$-components, parametrized by the
    central homotopy strings.
\end{enumerate}
\end{theorem}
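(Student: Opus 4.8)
The plan is to combine the Bekkert--Merklen classification of the indecomposable objects of $\kbp$ with Bobiński's algorithm for AR-triangles, treating band complexes and string complexes separately. Recall that each indecomposable object is either a band complex, attached to a homotopy band together with a scalar $\lambda\in k$, or a string complex, attached to a homotopy string. For the band complexes one runs a standard argument: Bobiński's algorithm shows that the AR-triangle starting at a band complex has a single indecomposable in its middle and that $\tau$ acts on these complexes with period one up to the appropriate shift, so they assemble into homogeneous tubes; collecting them over all homotopy bands and all $\lambda$ gives part (i).

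All the remaining parts concern string complexes. Using the reformulation of Bobiński's algorithm from Section 6 and the walk--reduction combinatorics of Section 5, I would compute, for each indecomposable string complex, its AR-translate and the number of indecomposable summands in the middle of its AR-triangle. The shape of $Q$ in Figure \ref{firstquiver} --- two branches built from oriented $3$-cycles together with the arrows $\alpha_i$ and $\beta_j$, joined through the segment of $\Box$-vertices --- forces the homotopy strings into a short list of families, distinguished by their endpoints and by which of the arrows $\alpha_1^{\pm1}$, $\beta_1^{\pm1}$ they meet. For the strings supported on the $\alpha$-branch, the algorithm shows that $\tau$ permutes them and that all AR-triangles have two middle terms except along a single boundary ray; when $x>0$ this produces exactly $x$ components of type $\zainf$, and tracking the composite of $\tau$ with the reductions through the whole branch yields the relation $\tau^{x+y}=[x]$, while for $x=0$ the branch closes up and the same computation gives periodicity, hence a tube of rank $y$ up to shift. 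This is part (ii); part (iii) is obtained verbatim with $\alpha,x,y$ replaced by $\beta,x',y'$. For the stalk complexes at the $\Box$- and $\blacktriangle$-vertices the algorithm gives AR-triangles with two middle terms throughout and $\tau$ of infinite order even modulo shift, placing them all in a single $\zainfinf$-component up to shift, which is part (iv); and for the central homotopy strings the same analysis exhibits, for each such string, a $\zainfinf$-component, giving part (v).

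The last step is completeness: one must check that every homotopy string, up to inversion and shift, belongs to exactly one of the families in (ii)--(v), so that no further components exist. This is a case analysis on the endpoints of the string and on whether it begins or ends with $\alpha_1^{\pm1}$ or $\beta_1^{\pm1}$; the exclusions built into the definition of a central homotopy string are precisely what makes each $\zainfinf$-component of type (v) appear once, and the $\blacktriangle$- and $\Box$-conventions ensure that each stalk complex of type (iv) is counted once while the trivial strings at $\vartriangle$-vertices do not contribute a spurious family. The main obstacle, I expect, is not any single conceptual point but the bookkeeping in the $\tau$-computations: pinning down the exact number of $\zainf$-components and the clean shift relations $\tau^{x+y}=[x]$ and $\tau^{x'+y'}=[x']$ requires following Bobiński's algorithm and the reductions of Section 5 across an entire branch of $Q$, and separating the $\zainf$-components from the $\zainfinf$-components relies on a careful count of middle terms in the AR-triangles. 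The more technical of these verifications I would defer to the appendices.
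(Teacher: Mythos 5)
Your overall strategy is the one the paper follows: Bekkert--Merklen to reduce to string and band complexes, Bobi\'nski's algorithm recast through the walk/reduction combinatorics to compute AR-triangles, the edge components for the two branches, separate $\zainfinf$-components for the central homotopy strings and for the stalk complexes at $\Box$/$\blacktriangle$, and completeness by reducing an arbitrary homotopy string to a minimal one (this is exactly Lemma \ref{reduksjonstyper} together with Lemma \ref{admredlemma}, which guarantees each admissible reduction is $\omega^+$ or $\omega_-$ and hence stays inside the AR-component).

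Two points in your write-up are not quite right as stated, though neither derails the strategy. First, for band complexes the AR-triangle starting in $Y_{m,\omega,V_n(\lambda)}$ has middle term $Y_{m,\omega,V_{n-1}(\lambda)}\oplus Y_{m,\omega,V_{n+1}(\lambda)}$, so it has \emph{two} indecomposable middle terms as soon as $n\geq 2$; the single-middle-term claim only holds at the mouth. What actually yields homogeneous tubes is that the triangle of Theorem \ref{bobinskimain}(i) begins and ends in the same object (so $\tau$ fixes each band complex, with no shift), and the middle terms reproduce Erdmann's AR-sequences in the category of indecomposable automorphisms, whose AR-quiver consists of rank-one tubes. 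Second, for part (iv) the facts ``two middle terms throughout and $\tau$ of infinite order modulo shift'' do not by themselves place all the $\Box$- and $\blacktriangle$-stalk complexes in a \emph{single} component; you must actually exhibit irreducible maps linking them, which the paper does via the chains $1_C^{-1}[i]\rightarrow 1_{D_{r_1-1}}^{-1}[i]\rightarrow\cdots\rightarrow 1_{D_0}^{-1}[i]$ and the analogous chain through the $D'$-vertices read off from the tables (Lemma \ref{lem:uglycomp}); and to know the component is genuinely $\zainfinf$ rather than a quotient $\zainfinf/G$ one also needs the uniqueness argument (four distinct $Q$-walk types on the four diagonals, as in Lemma \ref{lem:centralhomstr}), not just infinite $\tau$-order. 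Both items are recoverable from the computations you defer, but as written they are asserted rather than proved.
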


For any quiver as in Figure \ref{firstquiver} the edge of the
components in (ii) can be described easily in terms of the
quiver. Figure \ref{edge} shows the edge of a $\zainf$-component
where $\tau^{x+y} = [x]$.  The edge of a $\zainf$-component
where $\tau^{x'+y'} = [x']$ can be found symmetrically.

\begin{figure}[h]
\[
\resizebox{\linewidth}{!}{
\begin{tikzpicture}[scale=0.9]
  \node (0) at (-0.2,0.5) [kvertex] {$\cdots$};
  \node (1) at (0,0) [kvertex] {$P_{A_x}$};
  \node (s1) at (0,-0.4) [kvertex] {$[i]$};
  \node (2) at (2,0) [kvertex] {$P_{A_{x-1}}$};
  \node (s2) at (2,-0.4) [kvertex] {$[i-1]$};
  \node (3) at (3.35,0) [kvertex] {$\cdots$};
  \node (4) at (4.7,0) [kvertex] {$P_{A_1}$};
  \node (s4) at (4.7,-0.4) [kvertex] {$[i-x+1]$};
  \node (5) at (6.8,0) [kvertex] {$(P_{t\alpha_y} \rightarrow
    P_{s\alpha_y})$};
  \node (s5) at (6.8,-0.4) [kvertex] {$[i-x+1]$};
  \node (7) at (8.25,0) [kvertex] {$\cdots$};
  \node (6) at (9.7,0) [kvertex] {$(P_{t\alpha_1} \rightarrow
    P_{s\alpha_1})$};
  \node (s6) at (9.7,-0.4) [kvertex] {$[i-x+1]$};
  \node (8) at (11.7,0) [kvertex] {$P_{A_x}$};
  \node (s8) at (11.7,-0.4) [kvertex] {$[i-x]$};
  \node (9) at (3.35,1) [kvertex] {$\cdots$};
  \node (10) at (8.25,1) [kvertex] {$\cdots$};
  \node (11) at (11.9, 0.5) [kvertex] {$\cdots$};
  \draw [->] (1)--(0.9,1);
  \draw [->] (1.1,1)--(2);
  \draw [->] (2) -- (2.9,1);
  \draw [->] (3.8,1)--(4);
  \draw [->] (4)--(5.6,1);
  \draw [->] (5.8,1)--(5);
  \draw [->] (5)--(7.7,1);
  \draw [->] (8.8,1)--(6);
  \draw [->] (6)--(10.6,1);
  \draw [->] (10.8,1)--(8);
\end{tikzpicture}}
\]
\caption{The edge of an AR-component of type $\zainf$, with the degree shown
  below each complex.}\label{edge}
\end{figure}

\begin{exmp}\label{ex:hovedeks}
  We now consider the quiver $Q$ given in Figure \ref{firstex}, and
  the path algebra $\Lambda = kQ/I$ where $I =\left\langle ih, gi, hg,
    ed, fe, df, ba, cb, ac, ts, ut, su, qp, rq, pr \right\rangle$.
  Figures \ref{edge1} and \ref{edge2} show the edges of two
  AR-components of type $\zainf$, one given by $x$ and $y$, and one
  given by $x'$ and $y'$.  The first component has the property
  $\tau^5 = [3]$ and the second component has the property $\tau^6 =
  [2]$.

\begin{figure}[p]
\begin{minipage}{\textwidth}
\[\scalebox{.75}{\begin{tikzpicture}
  \node (1) at (90:3cm) [yvertex] {$1$};
  \node (2) at (122.7:3cm) [yvertex] {$2$};
  \node (4) at (155.4:3cm) [yvertex] {$4$};
  \node (6) at (188.1:3cm) [yvertex] {$6$};
  \node (8) at (220.8:3cm) [yvertex] {$8$};
  \node (9) at (253.5:3cm) [yvertex] {$9$};
  \node (10) at (286.2:3cm) [yvertex] {$10$};
  \node (11) at (318.9:3cm) [yvertex] {$11$};
  \node (12) at (351.6:3cm) [yvertex] {$12$};
  \node (13) at (24.3:3cm) [yvertex] {$13$};
   \node (14) at (57.3:3cm) [yvertex] {$14$};
  
  \node (3) at (106.3:4.5cm) [yvertex] {$3$};
  \node (5) at (138:4.5cm) [yvertex] {$5$};
  \node (7) at (171.7:4.5cm) [yvertex] {$7$};
  \node (16) at (73.6:4.5cm) [yvertex] {$16$};
  \node (15) at (41:4.5cm) [yvertex] {$15$};
  \draw [->] (1)--(2);
  \draw [->] (2)--(3);
  \draw [->] (3)--(1);
  \draw [->] (2)--(4);
  \draw [->] (4)--(5);
  \draw [->] (5)--(2);
  \draw [->] (4)--(6);
  \draw [->] (6)--(7);
  \draw [->] (7)--(4);
  \draw [->] (6)--(8);
  \draw [->] (8)--(9);
  \draw [->] (10)--(9);
  \draw [->] (11)--(10);
  \draw [->] (12)--(11);          
  \draw [->] (13)--(12);
  \draw [->] (13)--(15);
  \draw [->] (15)--(14);
  \draw [->] (14)--(13);
  \draw [->] (14)--(16);
  \draw [->] (16)--(1);
  \draw [->] (1)--(14);
  \node [below] at (104.3:2.9cm) {$a$};
  \node [below] at (135:2.9cm) {$d$};
  \node [right] at (171.7:2.9cm) {$g$};
  \node [right] at (201.4:2.8cm) {$j$};
  \node [above] at (238:2.8cm) {$k$};
  \node [above] at (269:2.9cm) {$l$};
  \node [above] at (302:2.8cm) {$m$};
  \node [left] at (338:2.9cm) {$n$};
  \node [left] at (8:2.9cm) {$o$};
  \node [below] at (44:2.8cm) {$p$};
  \node [below] at (75:2.9cm) {$s$};
  \node [left] at (95:3.8cm) {$c$};
  \node [left] at (112:3.8cm) {$b$};
  \node [above] at (133:3.8cm) {$f$};
  \node [left] at (145:3.6cm) {$e$};
  \node [above] at (167:3.8cm) {$i$};
  \node [below] at (177:3.8cm) {$h$};
  \node [right] at (34:3.6cm) {$q$};
  \node [above] at (46:3.8cm) {$r$};
  \node [right] at (68:3.8cm) {$t$};
  \node [left] at (80:3.8cm) {$u$};
\end{tikzpicture}}
\]
\end{minipage}
\caption{The quiver from Example \ref{ex:hovedeks}.}\label{firstex}
\end{figure}

\begin{figure}[p]
\[
\begin{tikzpicture}
  \node (0) at (-0.5,0.5) [kvertex] {$\cdots$};
  \node (1) at (0,0) [kvertex] {$P_{7}$};
  \node (s1) at (0,-0.4) [kvertex] {$[0]$};
  \node (2) at (2,0) [kvertex] {$P_{5}$};
  \node (s2) at (2,-0.4) [kvertex] {$[-1]$};
  \node (4) at (4,0) [kvertex] {$P_{3}$};
  \node (s4) at (4,-0.4) [kvertex] {$[-2]$};
  \node (5) at (6,0) [kvertex] {$(P_{9} \rightarrow
    P_{8})$};
  \node (s5) at (6,-0.4) [kvertex] {$[-2]$};
  \node (6) at (8,0) [kvertex] {$(P_{8} \rightarrow
    P_{6})$};
  \node (s6) at (8,-0.4) [kvertex] {$[-2]$};
  \node (8) at (10,0) [kvertex] {$P_{7}$};
  \node (s8) at (10,-0.4) [kvertex] {$[-3]$};
  \node (11) at (10.5, 0.5) [kvertex] {$\cdots$};
  \draw [->] (1)--(0.9,1);
  \draw [->] (1.1,1)--(2);
  \draw [->] (2) -- (2.9,1);
  \draw [->] (3.1,1)--(4);
  \draw [->] (4)--(4.9,1);
  \draw [->] (5.1,1)--(5);
  \draw [->] (5)--(6.9,1);
  \draw [->] (7.1,1)--(6);
  \draw [->] (6)--(8.9,1);
  \draw [->] (9.1,1)--(8);
\end{tikzpicture}
\]
\caption{The edge of a $\zainf$-component where $\tau^5 = [3]$.}\label{edge1}
\end{figure}

\begin{figure}[p]
\[
\resizebox{\linewidth}{!}{
\begin{tikzpicture}[scale=0.95]
  \node (0) at (-0.5,0.5) [kvertex] {$\cdots$};
  \node (1) at (0,0) [kvertex] {$P_{15}$};
  \node (s1) at (0,-0.4) [kvertex] {$[0]$};
  \node (2) at (2,0) [kvertex] {$P_{16}$};
  \node (s2) at (2,-0.4) [kvertex] {$[-1]$};
  \node (4) at (4,0) [kvertex] {$(P_{9} \rightarrow
    P_{10})$};
  \node (s4) at (4,-0.4) [kvertex] {$[-1]$};
  \node (5) at (6,0) [kvertex] {$(P_{10} \rightarrow
    P_{11})$};
  \node (s5) at (6,-0.4) [kvertex] {$[-1]$};
  \node (6) at (8,0) [kvertex] {$(P_{11} \rightarrow
    P_{12})$};
  \node (s6) at (8,-0.4) [kvertex] {$[-1]$};
  \node (7) at (10,0) [kvertex] {$(P_{12} \rightarrow
    P_{13})$};
  \node (s7) at (10,-0.4) [kvertex] {$[-1]$};
  \node (8) at (12,0) [kvertex] {$P_{15}$};
  \node (s8) at (12,-0.4) [kvertex] {$[-2]$};
  \node (11) at (12.5, 0.5) [kvertex] {$\cdots$};
  \draw [->] (1)--(0.9,1);
  \draw [->] (1.1,1)--(2);
  \draw [->] (2) -- (2.9,1);
  \draw [->] (3.1,1)--(4);
  \draw [->] (4)--(4.9,1);
  \draw [->] (5.1,1)--(5);
  \draw [->] (5)--(6.9,1);
  \draw [->] (7.1,1)--(6);
  \draw [->] (6)--(8.9,1);
  \draw [->] (9.1,1)--(7);
  \draw [->] (7)--(10.9,1);
  \draw [->] (11.1,1)--(8);
\end{tikzpicture}}
\]
\caption{The edge of a $\zainf$-component where $\tau^6 = [2]$.}\label{edge2}
\end{figure}

\begin{figure}[p]
\[ \scalebox{0.8}{ \begin{tikzpicture}[scale=.75,yscale=-1]
 \foreach \x in {1,...,7}
  \foreach \y in {1,3,5,7}
   \node (\y-\x) at (\x*2,\y) [vertex] {};
 \foreach \x in {1,...,6}
  \foreach \y in {0,2,4,6,8}
   \node (\y-\x) at (\x*2+1,\y) [vertex] {};
 \replacevertex{(7-4)}{ [tvertex] {$P_{6}$}}
 \replacevertex{(6-3)}{[yvertex] {$P_{8}$}} 
 \replacevertex{(5-3)}{[yvertex] {$P_{9}$}} 
 \replacevertex{(4-3)}{[yvertex] {$P_{10}$}}
 \replacevertex{(3-4)}{[yvertex] {$P_{11}$}}
 \replacevertex{(2-4)}{[yvertex] {$P_{12}$}}
 \replacevertex{(1-5)}{[yvertex] {$P_{13}$}}   
 \foreach \xa/\xb in {1/2,2/3,3/4,4/5,5/6,6/7}
  \foreach \ya/\yb in {1/2,3/4,5/4,7/6}
   {
    \draw [->] (\ya-\xa) -- (\yb-\xa);
    \draw [->] (\yb-\xa) -- (\ya-\xb);
   }
 \foreach \xa/\xb in {1/2,2/3,3/4,4/5,5/6,6/7}
  \foreach \ya/\yb in {1/0,3/2,5/6,7/8}
   {
    \draw [->] (\ya-\xa) -- (\yb-\xa);
    \draw [->] (\yb-\xa) -- (\ya-\xb);
   }
\node (1) at (1,4) [yvertex]{$\cdots$};
\node (2) at (15,4) [yvertex]{$\cdots$};
\node (3) at (8,-0.5) [yvertex]{$\vdots$};
\node (4) at (8,8.5) [yvertex]{$\vdots$};
\end{tikzpicture} } \]
\caption{The special $\zainfinf$-component.}\label{uglycomp}
\end{figure}
\end{exmp}

Figures \ref{edge1} and \ref{edge2} show the edges of two
AR-components of type $\zainf$, one given by $x$ and $y$, and one
given by $x'$ and $y'$.  The first component has the property $\tau^5
= [3]$ and the second component has the property $\tau^6 = [2]$.

Figure \ref{uglycomp} shows part of the structure in the special
$\zainfinf$-component.  In particular, it shows the irreducible maps
between the stalk complexes.

The remaining $\zainfinf$-components are parametrized by the central
homotopy strings, that is, each component contains exactly one central
homotopy string. Note that also the trivial homotopy strings
corresponding to the vertices $4$, $2$, $1$ and $14$ are central
homotopy strings.

\section{Background}
In this section we give an overview of the theory needed to prove
Theorem \ref{mainresult1}. First we give the definition of a gentle
algebra.  Then we describe $\kbp$ via homotopy strings and homotopy
bands, for the gentle algebra $\Lambda$.  Finally, we state some
results about the almost split triangles and components in the
AR-quiver of $\kbp$.

\subsection{Gentle algebras}\label{gentledef}
Let $\Lambda$ be isomorphic to $kQ/I$, for some quiver $Q$ and some
admissible ideal $I$.  Then $\Lambda$ is called \textit{special
  biserial} ~\cite{specialbiserial} if the following are satisfied:
\begin{enumerate}[(a)]
  \item for each vertex $x$ of $Q$ there are at most two arrows
    starting in $x$ and at most two arrows ending in $x$, and
  \item for any arrow $\alpha$ in $Q$ there is at most one arrow
    $\beta$ such that $\alpha\beta \notin I$ and at most
    one arrow $\gamma$ such that $\gamma \alpha \notin I$.
\end{enumerate}
Furthermore, if $I$ consists of only zero-relations, then $\Lambda$ is
a \textit{string algebra}, and if in addition all the relations in $I$
have length $2$ and for any arrow $\alpha$ in $Q$ there is at most
one arrow $\beta$ such that $\alpha\beta \in I$ and at most one arrow
$\gamma$ such that $\gamma \alpha \in I$, then $\Lambda$ is a
\textit{gentle} algebra.

We now state an equivalent definition of a gentle algebra, see
~\cite{bob}. This definition will be used later in the paper.
\begin{definition}\label{gentlealternativ}
  A finite dimensional algebra $\Lambda = kQ/I$ is \emph{gentle} if
  there exist two functions $S,T: Q_1 \rightarrow \{-1,1\}$ satisfying
  the following:
  \begin{enumerate}[(a)]
    \item if $\alpha \neq \beta$ start in the same vertex, then
      $S\alpha = -S\beta$,
    \item if $\alpha \neq \beta$ end in the same vertex, then $T\alpha
      = -T\beta$,
    \item if $\alpha$ starts in the vertex where $\beta$ ends, and
      $\alpha \beta$ is not in $I$, then $S\alpha = -T\beta$,
    \item if $\alpha$ starts in the vertex where $\beta$ ends, and
      $\alpha \beta$ is in $I$, then $S\alpha = T\beta$.
  \end{enumerate}
\end{definition}

\subsection{Homotopy strings and the category
  $\kbp$}\label{subhomstring}
In this subsection we first introduce the concept of homotopy strings
for a gentle algebra $\Lambda$, and given a homotopy string explain
how one can construct an associated string complex in $\kbp$. We also
discuss some special homotopy strings called homotopy bands, which in
addition to string complexes give rise to band complexes in $\kbp$.
Finally, we state a result giving the connection between
indecomposable objects of $\kbp$ and homotopy strings and bands.

Let $\Lambda\cong kQ/I$ be a gentle algebra. We now want to explain
what the homotopy strings associated with $\Lambda$ are. First we
define the double quiver $Q'$ of $Q$: Let $Q_0' = Q_0$, and $Q_1' =
Q_1 \cup Q_1^{-1}$, where $Q_1^{-1}$ is the set of formal inverses of
the arrows of $Q$; that is, for each $\alpha: x \rightarrow y$ in $Q$,
we have $\alpha^{-1}: y \rightarrow x$ in $Q'$. We also add formal
inverses of the trivial paths of $Q$: For a trivial path $1_x$ in $Q$
we add the inverse $1_x^{-1}$ to $Q'$.  

We define $(\alpha^{-1})^{-1} = \alpha$ and $(1_x^{-1})^{-1} = 1_x$
and extend the functions $s,t:Q'_1 \rightarrow Q_0$ to include
$s(1_x^{\varepsilon}) = t(1_x^{\varepsilon}) = x$ for all vertices $x$
and $\varepsilon \in \{-1,1\}$. We define the \emph{homotopy strings
  associated with $\Lambda$} to be all paths in $Q'$ which contain no
subpath of the form $\alpha\alpha^{-1}$ or $\alpha^{-1}\alpha$ for
$\alpha\in Q_1$. Note that each vertex $x$ in $Q_0$ gives rise to two
\emph{trivial homotopy strings}, namely the paths $1_x$ and $1_x^{-1}$
in $Q'$.  We also consider the empty homotopy string, denoted by
$\emptyset$.
	
A non-trivial, non-empty homotopy string $\omega$ can be written as
$\omega = \alpha_l \alpha_{l-1} \cdots \alpha_1$ where for each $1\leq
i \leq l$ the \emph{i}'th \emph{letter} $\alpha_i(\omega)=\alpha_i$ is
one arrow or the inverse of one arrow, and $l(\omega)=l$ is the number
of letters, called the \emph{length} of $\omega$. If $\omega$ is a
trivial or empty homotopy string, then $l(\omega)=0$. A homotopy
string $\omega$ is called \emph{direct} if all of the letters in
$\omega$ are arrows, and \emph{inverse} if all of the letters are
inverse arrows.  We define $\omega^{-1} = \alpha_1^{-1} \cdots
\alpha_l^{-1}$.

We now state when \emph{composition} of non-trivial and non-empty
homotopy strings is defined; two homotopy strings
$\omega=\alpha_l\cdots\alpha_1$ and
$\omega'=\alpha'_{l'}\cdots\alpha'_1$ where $l,l' \geq 1$, can be
composed if $s(\omega) = t(\omega')$ and one of the following
statements holds:
\begin{itemize}
\item $\alpha_1$ is direct and $\alpha'_{l'}$ is inverse and $\alpha_1^{-1}
  \neq \alpha'_{l'}$,
\item $\alpha_1$ is inverse and $\alpha'_{l'}$ is direct and $\alpha_1^{-1}
  \neq \alpha'_{l'}$,
\item $\alpha_1$ and $\alpha'_{l'}$ are both direct and
  $\alpha_1\alpha'_{l'}$ is in $I$, or
\item $\alpha_1$ and $\alpha'_{l'}$ are both inverse and
  ${\alpha'}_{l'}^{-1}\alpha_1^{-1}$ is in $I$.
\end{itemize}
The composition $\omega\cdot\omega'$ is the path $\omega\omega'$ in
$kQ'$, which is also a homotopy string.

We now define composition of homotopy strings involving trivial
homotopy strings. To do this we first need to extend the functions
$S,T$ from Definition \ref{gentlealternativ} to homotopy strings; for
any arrow $\alpha$ in $Q_1$, we define $S\alpha^{-1}=T\alpha$ and
$T\alpha^{-1}=S\alpha$. Furthermore, define $S1_x^{\varepsilon} =
\varepsilon$ and $T1_x^{\varepsilon}=-\varepsilon$ for $\varepsilon\in
\{-1,1\}$ and $x \in Q_0$. Let $\omega$ be a non-trivial and non-empty
homotopy string, then the composition $\omega \cdot 1_x^{\varepsilon}$
is defined (and equals $\omega$) if $x = s\omega$, and one of the
following statements holds:
\begin{itemize}
\item $\varepsilon = S(\alpha_1(\omega))$ and $\alpha_1(\omega)$ is an
  arrow, or
\item $\varepsilon = -S(\alpha_1(\omega))$ and $\alpha_1(\omega)$ is
  an inverse arrow.
\end{itemize}
Similarly, the composition $1_x^{\varepsilon}\cdot \omega$ is defined
(and equals $\omega$) if $x=t\omega$, and one of the following
statements holds:
\begin{itemize}
\item $\varepsilon = T(\alpha_l(\omega))$ and $\alpha_l(\omega)$ is an
  arrow, or
\item $\varepsilon = -T(\alpha_l(\omega))$ and $\alpha_l(\omega)$ is
  an inverse arrow.
\end{itemize}
The composition $1_x^{\varepsilon}\cdot 1_{x'}^{\varepsilon'}$ is
defined (and equals $1_x^{\varepsilon}$) if $x=x'$ and $\varepsilon =
\varepsilon'$.  Note that for any non-empty homotopy string $\omega$,
we have $\emptyset \cdot \omega = \omega$ and $\omega \cdot \emptyset
= \omega$.

For a homotopy string $\omega$ of positive length, there is a unique
partition $\omega=\sigma_L \cdots \sigma_1$ where each $\sigma_i$ is a
homotopy string of positive length; for each $1\leq i\leq L$ the
homotopy strings $\sigma_i$ and $\sigma_{i-1}$ can be composed as
homotopy strings; and none of the homotopy strings $\sigma_i$ can be
partitioned into non-empty non-trivial homotopy strings.  We call this
the \emph{homotopy partition} of $\omega$, and the $\sigma_j$'s are
called \emph{homotopy letters}. Define $\omega^{[i]}=\sigma_L\cdots
\sigma_{L-i+1}$ for $i>0$ and $\omega^{[0]}$ to be the trivial
homotopy string $1_{t\omega}^{\varepsilon}$ such that
$1_{t\omega}^{\varepsilon}\cdot \omega$ is defined as composition of
homotopy strings.

Furthermore, we define the \emph{degree} of $\omega$, denoted by
$\deg(\omega)$, to be the number of direct homotopy letters of
$\omega$ minus the number of inverse homotopy letters of $\omega$.
The degree of a trivial homotopy string is defined to be $0$.

A non-trivial and non-empty homotopy string $\omega=\sigma_L\cdots
\sigma_1$ with $s\omega=t\omega$ is called a \emph{homotopy band} if
$\deg(\omega)=0$; either $\sigma_{L}$ and $\sigma^{-1}_1$ are both
direct homotopy letters or $\sigma_{L}^{-1}$ and $\sigma_1$ are both
direct homotopy letters; $\sigma_1\cdot\sigma_{L}$ is defined as
composition of homotopy strings and there is no homotopy string
$\widetilde{\omega}$ with $l(\widetilde{\omega})<l(\omega)$ such that
$s\widetilde{\omega}=t\widetilde{\omega}$ and
$\omega=\widetilde{\omega}^n$ for some positive integer $n$.  If
$\omega= \sigma_L\cdots \sigma_1$ is a homotopy band, then the
homotopy string $\omega' = \sigma_{j-1}\cdots \sigma_1\sigma_L \cdots
\sigma_j$ is a \emph{rotation} of $\omega$ if $\omega'$ is a homotopy
band.

We now give an explicit description of how to construct a complex
$X_{m,\omega}$ in $\kbp$ from a homotopy string $\omega$ associated to
$\Lambda$ and an integer $m$. Let $P_x$ denote the indecomposable
projective in vertex $x$.  If $\omega = \emptyset$, then the complex
$X_{m,\omega}$ is the zero complex for all integers $m$.  If $\omega$
is trivial, that is, $\omega=1_x^{\varepsilon}$ for $\varepsilon \in
\{-1,1\}$, then the complex $X_{m,\omega}$ is the stalk complex
\[
\xymatrix{
  \cdots \ar[r] &0 \ar[r] & P_x \ar[r] &0 \ar[r] &\cdots
}
\]
with $P_x$ in degree $m$.  If $l(\omega) > 0$ we have the homotopy
partition $\omega = \sigma_L \cdots \sigma_1$ with $L \geq 1$. Let
$\sigma_i^*$ be the direct homotopy string in $\{\sigma_i,
\sigma_i^{-1}\}$.  Then $\sigma_i^*$ gives rise to a map
\[P_{t\sigma_i^*} \stackrel{p_{\sigma_i^*}}{\longrightarrow}
P_{s\sigma_i^*}~,\] sending $e_{t\sigma_i^*}$ to $\sigma_i^*
e_{s\sigma_i^*} = \sigma_i^*$, where $e_x$ is the primitive idempotent
corresponding to the vertex $x$ in $Q$.

For $m'\in\mathbb Z$ define an index set $\mathcal
I_{m'}(m,\omega)$ by
\[
\mathcal I _{m'}(m,\omega) = \{ i \in [0,L] ~|~ \deg(\omega^{[i]}) + m
= m' \} ~~.
\]
The object in degree $m'$ of $X_{m,\omega}$ is the direct sum 
\[\bigoplus_{i\in \mathcal I_{m'}(m,\omega)} P_{s (\omega^{[i]})}\ .\]
The differentials are defined componentwise, if
$\delta_X^{m'}$ is the differential from degree $m'$ to degree $m'+1$,
we define
\[
(\delta_X^{m'})_{i,j} = \left\{\begin{array}{ll}
    p_{\sigma_{L-i}^{-1}} & i = j-1 \text{ and } \sigma_{L-i} \text{ is
      inverse}\\
    p_{\sigma_{L-j}} & i = j+1 \text{ and } \sigma_{L-j} \text{ is
      direct}\\
    0 & \text{otherwise}
  \end{array}\right.
\]
for $j \in \mathcal I _{m'}(m,\omega)$ and $i \in \mathcal
I_{m'+1}(m,\omega)$.  The complexes $X_{m,\omega}$ constructed in this
way are called string complexes.  Observe that $X_{m,\omega} \cong
X_{m+\deg \omega,\omega^{-1}}$.

\begin{exmp}
  Consider the algebra $\Lambda = kQ/I$ given in Example
  \ref{ex:hovedeks}.  The homotopy string $\omega = u^{-1}cbf$ associated with
  $\Lambda$ has homotopy partition $\omega = u^{-1}
  \cdot c \cdot bf$.  We compute the string complex $X_{0,\omega}$ as
  follows:

  We have $\mathcal I_{-1}(0,\omega) = \{1\}$; $\mathcal
  I_{0}(0,\omega) = \{0,2\}$ and $\mathcal I_1(0,\omega) = \{3\}$.
  For the differentials in the complex, we get $(\delta_X^{-1})_{0,1}
  = p_{u}$; $(\delta_X^{-1})_{2,1} = p_{c}$; $(\delta_X^{0})_{3,0}
  = 0$ and $(\delta_X^{0})_{3,2} = p_{bf}$. Hence, the complex
  $X_{0,\omega}$ is
  \[
  \xymatrix{
    \cdots \ar[r] &0 \ar[r] &P_1 \ar[r]^(0.35){\left(\begin{smallmatrix}p_u \\
          p_c \end{smallmatrix}\right)} &P_{16} \oplus P_3
    \ar[r]^(0.6){\left(\begin{smallmatrix}
        0 & p_{bf} \end{smallmatrix}\right)} &P_5
    \ar[r] &0 \ar[r] &\cdots
  }
  \]
  with $P_1$ in degree $-1$.
\end{exmp}

Since homotopy bands are homotopy strings they give rise to
complexes as described above, and in addition each homotopy band
$\omega$ also gives rise to a family of band complexes
$Y_{m,\omega,\mu}$ in $\kbp$, where $m\in\mathbb Z$ and $\mu$ is an
indecomposable automorphism of a finite dimensional vector space.

Consider the equivalence relation on the set of all homotopy strings
generated by inverting a homotopy string, and let $\mathfrak W$ be a
complete class of representatives under this equivalence relation.
Similarly, we consider the equivalence relation on the set of all
homotopy bands generated by inverting a homotopy band and identifying
each homotopy band with its rotations, and let $\mathfrak B$ be a
complete set of representatives under this equivalence relation.

\begin{proposition}[{\cite[Theorem 3]{bekkert}}, see also
  {\cite[Proposition 3.1]{bob}}]
  Let $\Lambda\cong kQ/I$ be a gentle algebra.  Then the
  indecomposable objects of $\kbp$ are exactly the string complexes
  $X_{m,\omega}$ for $m \in \mathbb{Z}$ and $\omega \in \mathfrak W$,
  and the band complexes $Y_{m,\omega,\mu}$ for $m \in \mathbb{Z}$,
  $\omega \in \mathfrak B$ and $\mu$ an indecomposable automorphism of a
  finite dimensional vector space.
\end{proposition}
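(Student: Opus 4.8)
The plan is to reduce the classification to a combinatorial matrix problem. First I would observe that every object of $\kbp$ has a \emph{minimal} representative: a bounded complex $(P^{\bullet},d^{\bullet})$ of finitely generated projectives in which every entry of every differential lies in $\operatorname{rad}\Lambda$. Indeed, since $\operatorname{proj}\Lambda$ is a Krull--Schmidt category in which idempotents split, any invertible component of a differential produces a contractible direct summand $\cdots\to 0\to P_x\xrightarrow{\ \mathrm{id}\ }P_x\to 0\to\cdots$ that can be cancelled up to homotopy; iterating (the complexes being bounded, this terminates) yields a minimal complex, and two minimal complexes are homotopy equivalent if and only if they are isomorphic as complexes. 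Hence classifying the indecomposables of $\kbp$ amounts to classifying the indecomposable minimal complexes of projectives up to isomorphism of complexes.

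Next I would encode a minimal complex as a representation of a matrix problem. A minimal complex is a $\mathbb Z$-graded projective module $\bigoplus_{m,x}P_x^{(n_{m,x})}$ equipped with a square-zero endomorphism $d$ of degree $+1$ whose matrix entries lie in $\operatorname{rad}\Lambda$, i.e.\ in $\Ker$ of the projection to $\Lambda/\operatorname{rad}\Lambda$ inside the morphism spaces $\Hom_{\Lambda}(P_x,P_y)$. Because $\Lambda$ is gentle, each such morphism space is spanned (when nonzero) by a single path not in $I$, and the functions $S,T$ of Definition \ref{gentlealternativ} split the "slots" at each vertex into two families and pin down exactly which composites of paths vanish. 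Using this, together with the quadratic relations coming from $d^2=0$, I would set up a matrix problem of clannish / "bundles of semichains" type whose representations are precisely these graded modules with differential and whose admissible transformations are the chain isomorphisms, quotiented by the cancellations available in the first step.

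I would then solve the matrix problem, recovering the Bekkert--Merklen classification: its indecomposables split into a string family and a band family. The string indecomposables correspond to walks in the double quiver $Q'$ containing no subpath $\alpha\alpha^{-1}$ or $\alpha^{-1}\alpha$ — this prohibition encoding exactly that no further contractible summand can be split off — with the composability conditions across homotopy letters encoding, via $I$, the constraint $d^{2}=0$; these are the homotopy-string complexes $X_{m,\omega}$, with $m$ recording the placement in cohomological degree. The band indecomposables arise from closed walks of degree $0$ that are not proper powers, decorated by an indecomposable automorphism $\mu$ of a finite-dimensional vector space, giving the band complexes $Y_{m,\omega,\mu}$. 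Finally I would account for the redundancies: the isomorphism $X_{m,\omega}\cong X_{m+\deg\omega,\omega^{-1}}$ forces the restriction $\omega\in\mathfrak W$; for bands, inversion and rotation yield isomorphic complexes, forcing $\omega\in\mathfrak B$, while inequivalent $\mu$ are already separated.

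The hard part is the third step: actually carrying out the matrix-problem reduction and proving that its solution is \emph{complete}, i.e.\ that every indecomposable minimal complex of projectives is a string or band complex, with no sporadic exceptions — this is the content of \cite[Theorem 3]{bekkert}. A slicker alternative route, indicated in the introduction, is to push $\kbp\hookrightarrow D^b(\mod\Lambda)$ forward along the Happel functor into $\smod$, use that $\hat\Lambda$ is special biserial so that $\smod$ is classified by strings and bands in the sense of Butler and Ringel, and then identify the full subcategory cut out by $\kbp$ with the homotopy-string and homotopy-band complexes by matching the combinatorics of $\hat\Lambda$ with the double quiver $Q'$; there the principal obstacle shifts to controlling the essential image of the Happel functor restricted to $\kbp$.
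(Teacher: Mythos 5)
The statement you are asked to prove is not proved in the paper at all: it is quoted verbatim from Bekkert--Merklen (Theorem 3 of \cite{bekkert}, see also Proposition 3.1 of \cite{bob}), so there is no internal argument to compare with. Judged as a self-contained proof, your proposal has a genuine gap, and you name it yourself. The preliminary reductions are fine: passing to minimal complexes (cancelling contractible summands $0\to P_x\xrightarrow{\mathrm{id}}P_x\to 0$, using that $\operatorname{proj}\Lambda$ is Krull--Schmidt and that minimal complexes are homotopy equivalent iff isomorphic) is correct and standard, and so is the final bookkeeping of redundancies ($X_{m,\omega}\cong X_{m+\deg\omega,\omega^{-1}}$ forcing $\omega\in\mathfrak W$, rotation and inversion for bands forcing $\omega\in\mathfrak B$). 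But the heart of the statement --- that \emph{every} indecomposable minimal complex over a gentle algebra is a string or band complex, with no other families --- is exactly the step you do not carry out. You gesture at a matrix problem of ``clannish / bundles of semichains'' type, but you neither set it up precisely (which composites of the $S,T$-data and which admissible row/column transformations survive the quotient by homotopy is the delicate part) nor solve it; instead you write that this ``is the content of \cite[Theorem 3]{bekkert},'' i.e.\ you defer the crux to the very result being proved, which makes the argument circular as it stands.

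The alternative route you sketch through the Happel functor has the same status: identifying the essential image of $\kbp$ inside $\smod$ and matching the string/band combinatorics of the repetitive algebra $\hat\Lambda$ with homotopy strings and bands for $\Lambda$ is again the substantive work, and you flag it as the ``principal obstacle'' without addressing it. So your proposal is a reasonable roadmap that correctly locates where the difficulty lies, but it does not constitute a proof; to make it one you would have to actually execute the Bekkert--Merklen reduction (or the Happel-functor comparison) in full, which is precisely why the paper cites the result rather than reproving it.
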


\subsection{Almost split triangles in $\kbp$} \label{sec:ast} Before
we state Bobi\'{n}ski's main result, giving the connection between
homotopy strings and almost split sequences, we need a result about
the almost split sequences in the category $\mathcal C$ of
indecomposable automorphisms of finite-dimensional vector spaces over
$k$. Let $\mu: V\rightarrow V$ be an indecomposable object of
$\mathcal C$ where $\dim_kV = n>0$.  Since $\mu$ is indecomposable, it
is similar to a Jordan matrix $J_n(\lambda)$ consisting of one Jordan
block, where $\lambda \in k$ is the eigenvalue of $\mu$.  Hence the
object $\mu$ of $\mathcal C$ can be represented by the pair
$(\lambda,n)$ and we denote it by $V_n(\lambda)$, as in \cite{karin}.
The following lemma from \cite{karin} gives the AR-sequence starting
in $V_n(\lambda)$ in $\mathcal C$.
\begin{lemma}
  Let $\mu=V_n(\lambda)$ be an indecomposable object in the category
  $\mathcal C$. Then there is an AR-sequence
  \[ \psi: \ \ 0\rightarrow V_n(\lambda) \rightarrow
  V_{n-1}(\lambda)\oplus V_{n+1}(\lambda) \rightarrow V_n(\lambda)
  \rightarrow 0 \] in $\mathcal C$, where $V_0(\lambda) = 0$.
\end{lemma}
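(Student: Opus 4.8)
The plan is to exhibit the sequence $\psi$ explicitly and verify it is almost split by checking the two defining properties: that the left map is left almost split (and the right map right almost split) and that $\psi$ is non-split. The maps themselves are the obvious ones for nested Jordan blocks: take the left map $V_n(\lambda)\to V_{n-1}(\lambda)\oplus V_{n+1}(\lambda)$ to have components the canonical quotient $V_n(\lambda)\twoheadrightarrow V_{n-1}(\lambda)$ (killing the socle line, which is a morphism in $\mathcal C$ since it commutes with the nilpotent-plus-$\lambda$ action) and the canonical inclusion $V_n(\lambda)\hookrightarrow V_{n+1}(\lambda)$ (onto the corresponding subrepresentation); dually the right map has components the inclusion $V_{n-1}(\lambda)\hookrightarrow V_n(\lambda)$ and the quotient $V_{n+1}(\lambda)\twoheadrightarrow V_n(\lambda)$, with signs chosen so that the composite is zero. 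One checks directly that the resulting three-term sequence of vector spaces is short exact and that it does not split (e.g. by observing that a splitting would force $V_n(\lambda)$ to be a summand of $V_{n-1}(\lambda)\oplus V_{n+1}(\lambda)$, contradicting uniqueness of decomposition into Jordan blocks, or by a direct $\dim$ count on endomorphism/extension spaces).

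The substantive step is the almost split property. For this I would work in the category $\mathcal C$, which is equivalent to the category of finite-dimensional modules over $k[t]$ localized appropriately — concretely, over $k[t]_{(t-\lambda)}$, or just the category of pairs $(V,\mu)$ — where the indecomposables at eigenvalue $\lambda$ form a chain $V_1(\lambda)\subset V_2(\lambda)\subset\cdots$ under the nested inclusions. The key computation is that $\operatorname{Hom}_{\mathcal C}(V_n(\lambda),V_m(\mu))=0$ unless $\mu=\lambda$, and in that case it is isomorphic to $k^{\min(m,n)}$; every non-isomorphism out of $V_n(\lambda)$ into another indecomposable $V_m(\lambda)$ factors through either the quotient $V_n\to V_{n-1}$ (if $m<n$) or the inclusion $V_n\to V_{n+1}$ (if $m\ge n$, since then the map has nontrivial kernel, being non-injective as $n$-to-$m$ with... — more carefully, one argues via the radical of $\operatorname{End}$ and the structure of homomorphisms between truncated power-series modules). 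Granting this factorization lemma, left almost splitness of $(q,\iota)\colon V_n(\lambda)\to V_{n-1}(\lambda)\oplus V_{n+1}(\lambda)$ is immediate; the dual argument gives right almost splitness of the other map; together with non-splitness this is exactly the definition of an AR-sequence.

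Alternatively — and this is probably the cleanest route for a paper that wants to cite rather than reprove — one invokes that $\mathcal C$ is (equivalent to) a uniserial length category, indeed $\mathcal C \simeq \bigsqcup_{\lambda\in k}\operatorname{mod}\widehat{k[t]_{(t-\lambda)}}$ or the category of nilpotent endomorphisms of $k$-vector spaces after restricting to a fixed eigenvalue, where AR-sequences are classical and take precisely the stated shape (this is the $A_\infty$ tube, hence the notation $V_n$, $V_0=0$). Then the only thing to check is that the functor "forget nothing, just reindex by $(\lambda,n)$" carries the known AR-sequence to the displayed one, which is the bookkeeping that $V_n(\lambda)$ has AR-translate $V_n(\lambda)$ itself and middle term $V_{n-1}(\lambda)\oplus V_{n+1}(\lambda)$. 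The main obstacle, such as it is, is purely organizational: deciding how much of the homomorphism-space computation in $\mathcal C$ to spell out versus citing \cite{karin} wholesale; there is no genuine mathematical difficulty, since the statement is a textbook fact about tubes of rank one, and the lemma is being recorded only to fix notation for the band-complex analysis that follows.
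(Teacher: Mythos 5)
Your argument is correct, but it does something the paper deliberately does not: the paper offers no proof of this lemma at all, recording it purely as a known fact with a citation to Erdmann \cite{karin}, since $\mathcal C$ (automorphisms of finite-dimensional vector spaces, equivalently finite-length $k[t,t^{-1}]$-modules) splits into blocks indexed by the eigenvalue $\lambda\neq 0$, each a uniserial category whose AR-quiver is a homogeneous tube. Your first route — writing down the maps built from the canonical quotient $V_n(\lambda)\to V_{n-1}(\lambda)$ and inclusion $V_n(\lambda)\to V_{n+1}(\lambda)$, checking non-splitness by Krull--Schmidt, and verifying the almost split property via the computation of $\Hom(V_n(\lambda),V_m(\mu))$ and the factorization of every non-isomorphism between Jordan blocks through the quotient (if the target is shorter) or the inclusion (if the target is at least as long) — is the standard self-contained proof and is sound; your second route is precisely what the authors do. Two small points if you keep the explicit version: finish the $m\ge n$ case cleanly (a non-isomorphism $V_n\to V_m$ with $m\ge n$ sends the generator into the radical, hence factors through $1\mapsto t$ into $V_{n+1}$, viewing $V_j$ as $k[t]/(t^j)$ after shifting by $\lambda$), and record the routine reduction from arbitrary targets $X$ to indecomposable ones (if no component of $f\colon V_n\to\bigoplus X_i$ is an isomorphism then $f$ is not a split monomorphism and each component factors, and conversely). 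What your explicit proof buys is independence from \cite{karin}; what the citation buys is brevity, which is all the paper needs since the lemma only fixes notation for the homogeneous tubes of band complexes.
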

In particular, the AR-structure of $\mathcal C$ consists of
homogeneous tubes, parametrized by the eigenvalues $\lambda$ of the
indecomposable automorphisms.

For each homotopy string $\omega$ we will give combinatorial
definitions of homotopy strings $\omega_+$, $\omega^+$ and
$\omega^+_+$ and integers $m'(\omega)$ and $m''(\omega)$, see Section
\ref{almostsplittriangels} and Appendix \ref{bobinskiappendix}.  With
these definitions we can state the following:

\begin{theorem}[\cite{bob}, Main Theorem]\label{bobinskimain}$ _{}$ 
\begin{itemize}
\item[i)] Let $\omega$ be a homotopy band, $m\in \mathbb{Z}$, $\lambda
  \in k$ and $\mu=V_n(\lambda)$ an indecomposable automorphism of
  a finite dimensional vector space. Then we have an almost split
  triangle in $\kbp$ of the form
\[
Y_{m,\omega,\mu} \rightarrow Y_{m,\omega,\mu_1} \oplus
Y_{m,\omega,\mu_2} \rightarrow Y_{m,\omega,\mu}\rightarrow Y_{m,\omega,\mu}[1]
\] 
where $\mu_1 = V_{n-1}(\lambda)$ and $\mu_2=V_{n+1}(\lambda)$.
\item[ii)] Let $\omega$ be a homotopy string, and $m \in
  \mathbb{Z}$. Then we have an almost split triangle in $\kbp$ of the
  form
\[
X_{m,\omega} \rightarrow X_{m+m'(\omega),\omega_+} \oplus
X_{m,\omega^+} \rightarrow X_{m+m''(\omega),\omega^+_+}\rightarrow
X_{m-1,\omega} ~~~.
\]
\end{itemize}
\end{theorem}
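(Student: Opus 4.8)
The plan is to transport the statement from $\kbp$ to the stable module category $\smod$ of the repetitive algebra via the Happel functor $\Psi\colon D^b(\mod\Lambda)\to\smod$, whose restriction to $\kbp$ is again a full and faithful exact functor of triangulated categories. I would rely on two soft facts. First, since $\Lambda$ is gentle it is Gorenstein, so $\kbp$ has almost split triangles and hence a unique one ending at each indecomposable. Second, a full, faithful, exact functor $F$ of triangulated categories reflects distinguished triangles and reflects the almost split property among triangles all of whose vertices lie in its essential image: if $F$ sends a triangle $A\to B\to C\to A[1]$ to an almost split sequence, then that triangle is already almost split, because $F$ reflects non-vanishing of the connecting map, indecomposability and locality of endomorphism rings of $A$ and $C$, and the relevant factorisation property (a non-isomorphism $C'\to C$ cannot become an isomorphism under $F$, so its image, and then itself, factors through $B\to C$). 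So it will be enough to compute $\Psi$ on string and band complexes and to match the output with the known almost split sequences over $\hat\Lambda$.

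For $\Lambda$ gentle, $\hat\Lambda$ is special biserial \cite{ringel}, and the first real task is the explicit computation of $\Psi$ in the gentle case. I expect $\Psi(X_{m,\omega})$ to be an indecomposable \emph{string} $\hat\Lambda$-module $M(\hat{\omega}_{m})$, where $\hat{\omega}_{m}$ is a $\hat\Lambda$-string obtained from the homotopy string $\omega$ by reading its homotopy letters as walks in the quiver of $\hat\Lambda$, inserting the connecting arrows of the repetitive quiver, with $m$ fixing in which copy of $Q$ the walk begins: it is indecomposable because $\Psi$ is full and faithful, it is not a non-uniserial projective-injective $\hat\Lambda$-module (which one excludes by inspecting the lengths of its uniserial subfactors), and the degree-$0$ part of $X_{m,\omega}$ forces it to be a string rather than a band module; dually $\Psi(Y_{m,\omega,\mu})$ should be the band $\hat\Lambda$-module $M(\hat\omega,\mu)$ for the corresponding $\hat\Lambda$-band and the same automorphism $\mu$. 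This is the step where the auxiliary homotopy strings $\omega_+$, $\omega^+$, $\omega^+_+$ and the integers $m'(\omega)$, $m''(\omega)$ would be \emph{defined} (Section~\ref{almostsplittriangels} and Appendix~\ref{bobinskiappendix}), precisely so as to encode, in terms of $\omega$ and a degree shift, the left and right hook/cohook operations one performs on $\hat\Lambda$-strings.

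Then I would invoke the combinatorial description of almost split sequences over a string algebra \cite{tamebiserial}: the one ending at $M(\hat{\omega}_{m})$ has middle term the sum of the string modules obtained by the right and the left hook/cohook operations on $\hat{\omega}_{m}$, and, translating these operations back along $\Psi$, this becomes $X_{m+m'(\omega),\omega_+}\oplus X_{m,\omega^+}$ in the middle, $X_{m+m''(\omega),\omega^+_+}$ at one end, and $X_{m,\omega}$ at the other. Since $\Psi$ intertwines the shift of $D^b(\mod\Lambda)$ with the cosyzygy functor on $\smod$ one has $X_{m,\omega}[1]=X_{m-1,\omega}$, so the almost split sequence in $\smod$ is exactly $\Psi$ applied to a triangle of the shape in (ii) (with maps the corresponding morphisms of string complexes), and the reflection principle above makes that triangle almost split in $\kbp$. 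For part (i) the argument is shorter: the almost split sequence of the band module $M(\hat\omega,\mu)$ over the special biserial algebra $\hat\Lambda$ is the image of the sequence $\psi$ of the preceding lemma \cite{karin} under the exact functor $M(\hat\omega,-)$, hence has middle term $M(\hat\omega,V_{n-1}(\lambda))\oplus M(\hat\omega,V_{n+1}(\lambda))$ and both ends $M(\hat\omega,\mu)$; pulling back along $\Psi$ gives the stated triangle, again almost split by reflection.

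The hard part will be the explicit description of $\Psi$ on string and band complexes — producing $\hat{\omega}_{m}$ from $\omega$ and certifying that the image is a string (resp.\ band) module and never a non-uniserial projective-injective one — together with the combinatorial verification that the left and right hook/cohook operations over $\hat\Lambda$ pull back along $\Psi$ \emph{exactly} to $\omega\mapsto\omega_+$, $\omega\mapsto\omega^+$, $\omega\mapsto\omega^+_+$ with the claimed degree shifts $m'(\omega),m''(\omega)$ and with no off-by-one error; keeping this bookkeeping consistent with the identification $X_{m,\omega}\cong X_{m+\deg\omega,\omega^{-1}}$ is the most delicate point.
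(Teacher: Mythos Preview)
The paper does not prove Theorem~\ref{bobinskimain} at all: it is quoted verbatim from \cite{bob} (the label reads ``\cite{bob}, Main Theorem'') and used as a black box, with Appendix~\ref{bobinskiappendix} only recalling Bobi\'nski's algorithm in order to prove Proposition~\ref{tabellproposisjon}. So there is no ``paper's own proof'' to compare against.

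That said, your proposal is essentially a sketch of Bobi\'nski's proof in \cite{bob}, as the introduction of the present paper summarises it: transport to $\smod$ via the Happel functor, use that $\hat\Lambda$ is special biserial so its AR-sequences are given by hooks/cohooks on strings \cite{tamebiserial} and by the $V_n(\lambda)$-sequences on bands \cite{karin}, then reflect back. The outline is correct in spirit; the genuine work, as you yourself flag, is (a) making $\Psi$ explicit on string and band complexes so that $\hat\omega_m$ is actually a $\hat\Lambda$-string (resp.\ band) and never a non-uniserial projective-injective, and (b) checking that the hook/cohook combinatorics on $\hat\Lambda$-strings pull back exactly to the operations $\omega\mapsto\omega^+,\omega_+,\omega^+_+$ with the stated degree shifts. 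Neither of these is done in the present paper, and both are nontrivial case analyses carried out in \cite{bob}; your proposal correctly identifies them as the hard points but does not execute them.
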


From now on, we will denote the string complex $X_{m,\omega}$ by
$\omega[m]$.  We call the integer $m$ the \emph{degree} of the string
complex.

\subsection{Components of the AR-quiver of
  $\kbp$}\label{sec:components}

We define the number of middle terms in an AR-triangle $\chi$ to be
$\alpha(\chi)$.  Note that from Theorem \ref{bobinskimain}, we have
$\alpha(\chi) \leq 2$ for all AR-triangles $\chi$ in $\kbp$.  By
\cite{riedtmann} we know that any component in a stable translation
quiver is of the form $\mathbb Z\Delta/G$ where $\Delta$ is a directed
tree and $G$ is an admissible group of automorphisms on
$\mathbb{Z}\Delta$.  It is clear that since any component in the
AR-quiver $\kbp$ is a stable translation quiver, and $\alpha(\chi)
\leq 2$ for all AR-triangles, $\Delta$ is either $A_n$, $A_{\infty}$ or
$A_{\infty}^{\infty}$ (see also \cite{butlerringel}).

\section{Derived equivalence classes of
  $\ant$-quivers} \label{sec:bastian}

In this section we describe representatives of the derived equivalence
classes of the cluster-tilted algebras of type $\ant$.  These algebras
are known to be gentle by \cite{abcp}.  We now
introduce some notation.

We start by fixing an embedding into the plane, to be able to make the
distinction between the clockwise and the counterclockwise
direction. 

Let $\mathcal Q_n^{\star}$ be the class of quivers of the form as in
Figure \ref{normalform}, for some non-negative integers $r_1$, $r_2$,
$s_1$, $s_2$ where $r = r_1 + r_2 > 0$ and $s = s_1 + s_2 > 0$, and $r
+ s = n$,  \cite{bastian}.  For $Q$ in $\mathcal Q_n^{\star}$ we
denote by $\Lambda_Q$ the path algebra of $Q$ modulo the ideal
generated by all zero-relations which are compositions of two arrows
in the same 3-cycle of $Q$.  We have the following:

\begin{theorem}[\cite{bastian}] \label{derivertekvivalent} $\ $
\begin{itemize}
  \item[(1)] If $\Lambda$ is a cluster-tilted algebra of type $\ant$, then
  there exists $Q$ in $\mathcal Q_n^{\star}$ such that $\Lambda$ and
  $\Lambda_Q$ are derived equivalent.

  \item[(2)] If $Q$ and $Q'$ belong to $\mathcal Q_n^{\star}$, then
  $\Lambda_Q$ and $\Lambda_{Q'}$ are derived equivalent if and only if
  they have the same parameters (up to changing the roles of $r_i$
  and $s_i$ for $i \in \{1,2\}$).
  \end{itemize}
\end{theorem}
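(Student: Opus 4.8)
The plan is to reduce Theorem~\ref{derivertekvivalent} to the combinatorics of the mutation class of $\ant$ together with a supply of derived equivalences coming from tilting. By \cite{abcp} a cluster-tilted algebra of type $\ant$ is gentle, and its quiver lies in the mutation class of the acyclic $\ant$-quiver; under the correspondence with triangulations of an annulus carrying marked points on its two boundary circles (see Section~\ref{sec:bastian}), quiver mutation becomes the flip of a diagonal. Part~(1) then amounts to the following: starting from an arbitrary quiver $Q$ in this mutation class, produce a finite sequence of mutations --- each one realized by a tilting complex, hence inducing a derived equivalence of the associated cluster-tilted algebras --- terminating at a quiver of the shape of Figure~\ref{normalform}.

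The key input is a short list of ``elementary'' derived equivalences: an APR-tilt at a source or a sink of $Q$ realizes the mutation there, and a few further local moves, around a $3$-cycle or around an arrow belonging to no $3$-cycle, are likewise realized by two-term tilting complexes. Granting these, I would argue by induction on the number of vertices of $Q$, or on its number of $3$-cycles: first use such moves to normalize the orientations of the arrows outside the $3$-cycles, and then to slide the $3$-cycles, one at a time, into the standard positions of Figure~\ref{normalform}; at the end one reads off $r_1,r_2,s_1,s_2$ from the numbers of $3$-cycles in each of the possible local shapes. The bulk of the verification is the check that in each of the finitely many local configurations that can occur there is an available derived-equivalence move that strictly simplifies $Q$ towards the normal form.

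For part~(2), the ``if'' direction is the easier half: equal parameters, up to exchanging the roles of $r_i$ and $s_i$, force derived equivalence, and this is obtained by exhibiting the identification explicitly --- by a quiver isomorphism where possible, otherwise by a tilting complex --- using the symmetry of the annulus that exchanges its two boundary circles. For the ``only if'' direction I would exhibit a derived invariant that recovers the parameters. All the algebras $\Lambda_Q$ are gentle, so one can compute the Avella-Alaminos--Geiss derived invariant (the one used in \cite{diana}) directly on the normal form of Figure~\ref{normalform}: each $3$-cycle contributes a fixed pair, while the two boundary cycles contribute pairs governed by $(r_1,r_2)$ and by $(s_1,s_2)$ respectively, so that the invariant determines the unordered pair $\{(r_1,r_2),(s_1,s_2)\}$ --- exactly the claimed quantity. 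Together with the invariance of the number of vertices of $Q$ (the rank of $K_0(\kbp)$), and with the Coxeter polynomial as a further check, this shows that distinct parameter sets yield non-derived-equivalent algebras.

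I expect the combinatorial reduction in part~(1) to be the main obstacle. The difficulty lies not in any single mutation but in the global bookkeeping: one must choose the mutation sequence so that \emph{every} step is one of the derived-equivalence moves above, \emph{and} so that the procedure provably terminates at the normal form, which forces a fairly intricate case analysis of how $3$-cycles and the two boundary cycles can be arranged inside a general quiver of the mutation class. A secondary point requiring care is the Avella-Alaminos--Geiss computation in part~(2): it must be carried through precisely enough to confirm that the \emph{only} ambiguity that survives is the exchange of the roles of $r_i$ and $s_i$, i.e.\ that no finer derived invariant is needed.
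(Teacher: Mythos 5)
This theorem is not proved in the paper you were given: it is quoted from Bastian \cite{bastian}, so the only meaningful comparison is with the argument in that reference. Your outline does follow essentially the same strategy Bastian uses: bring an arbitrary quiver in the mutation class of $\ant$ to the normal form of Figure \ref{normalform} by a sequence of mutations each of which is realized by a (two-term) tilting complex, and then distinguish the normal forms by the Avella-Alaminos--Geiss invariant, which recovers the parameters up to the exchange $r_i \leftrightarrow s_i$; the ``if'' half of (2) is indeed immediate from the normal form together with the reflection symmetry.

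As a proof, however, what you have written is a roadmap rather than an argument, and the parts you defer are precisely the substance of \cite{bastian}. Concretely: (a) quiver mutation does \emph{not} in general preserve the derived class of the associated cluster-tilted algebra, so you must specify exactly which local configurations admit a ``good'' mutation realized by a tilting complex (APR tilts at sinks/sources, plus the specific two-term complexes for the configurations involving $3$-cycles), and then prove that from every quiver in the mutation class some sequence of such moves exists and terminates at the normal form --- this induction and its case analysis are the core of part (1) and are nowhere carried out; and (b) the AG-invariant computation for the normal forms must actually be performed and shown to be injective on parameter sets modulo the $r\leftrightarrow s$ swap, since otherwise the ``only if'' half of (2) is unsupported (asserting that the $3$-cycles and the two boundary cycles ``contribute pairs governed by $(r_1,r_2)$ and $(s_1,s_2)$'' is the statement to be proved, not a proof). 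So the proposal identifies the correct ingredients --- the same ones the cited source uses --- but both essential verifications are missing, and until they are supplied the statement has not been proved.
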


\begin{figure}[t]
\begin{minipage}{\textwidth}
\[ 
\scalebox{.8}{
\begin{tikzpicture}
  \node (1) at (30:3cm) [yvertex] {$B'_{s_2-1}$};
  \node (2) at (60:3cm) [yvertex] {$B'_1$};
  \node (3) at (90:3cm) [yvertex] {$F$};
  \node (4) at (120:3cm) [yvertex] {$B_1$};
  \node (5) at (150:3cm) [yvertex] {$B_{r_2-1}$};
  \node (6) at (180:3cm) [yvertex] {$D_0$};
  \node (7) at (210:3cm) [yvertex] {$D_1$};
  \node (8) at (240:3cm) [yvertex] {$D_{r_1-1}$};
  \node (9) at (270:3cm) [yvertex] {$C$};
  \node (10) at (300:3cm) [yvertex] {$D'_{s_1-1}$};
  \node (11) at (330:3cm) [yvertex] {$D'_1$};
  \node (12) at (360:3cm) [yvertex] {$D'_0$};
  \node (t1) at (105:4.5cm) [yvertex] {$A_1$};
  \node (t2) at (165:4.5cm) [yvertex] {$A_{r_2}$};
  \node (t3) at (75:4.5cm) [yvertex] {$A'_1$};
  \node (t4) at (15:4.5cm) [yvertex] {$A'_{s_2}$};
  \draw [->] (3)--(2);
  \draw [->] (3)--(4);
  \draw [->] (5)--(6);
  \draw [->] (6)--(7);
  \draw [->] (8)--(9);
  \draw [->] (10)--(9);
  \draw [->] (12)--(11);
  \draw [->] (1)--(12);
  \draw [dotted] (4) .. controls (135:3cm) .. (5);
  \draw [dotted] (7) .. controls (225:3cm) .. (8);
  \draw [dotted] (11) .. controls (315:3cm) .. (10);
  \draw [dotted] (2) .. controls (45:3cm) .. (1);
  \draw [->] (t1)--(3);
  \draw [->] (4)--(t1);
  \draw [->] (t2)--(5);
  \draw [->] (6)--(t2);
  \draw [->] (t3)--(3);
  \draw [->] (2)--(t3);
  \draw [->] (t4)--(1);
  \draw [->] (12)--(t4);
  \node [right] at (195:2.9cm) {$\alpha_{r_2+1}$};
  \node [above] at (260:2.95cm) { $\alpha_{r}$};
  \node [above] at (280:2.95cm) { $\beta_{s}$};
  \node [left] at (345:3cm) { $\beta_{s_2+1}$};
  \node [left] at (15:3cm) { $\beta_{s_2}$};
  \node [below] at (77:2.9cm) {$\beta_{1}$};
  \node [below] at (101:2.9cm) {$\alpha_{1}$};
  \node [right] at (165:2.9cm) { $\alpha_{r_2}$};
  \node [left] at (174:3.3cm) { $\gamma_{2r_2}$};
  \node [right] at (6:3.5cm) { $\delta_{2s_2}$};
  \node [above] at (163:4.1cm) {$\gamma_{2r_2-1}$};
  \node [above] at (18:4.1cm) {$\delta_{2s_2-1}$};
  \node [left] at (110:3.65cm) {$\gamma_{2}$};
  \node [right] at (69:3.6cm) {$\delta_{2}$};
  \node [right] at (100:3.8cm) {$\gamma_{1}$};
  \node [left] at (80:3.8cm) {$\delta_{1}$};
\end{tikzpicture} }
\]
\end{minipage}
\caption{Normal form of a quiver with all parameter values non-zero.}\label{normalform}
\end{figure}

For a quiver $Q$ in $\mathcal Q_n^{\star}$, a clockwise oriented 3-cycle in
$Q$ will be called an $\emph{r-cycle}$ and a counter-clockwise
oriented 3-cycle an $\emph{s-cycle}$. The arrows
$\alpha_{r_2+1},\ldots,\alpha_r$ are called r-arrows, and similarly
the arrows $\beta_{s_2+1},\ldots, \beta_s$ are called s-arrows.

The vertices of a quiver $Q$ in $\mathcal{Q}_n^{\star}$ can be divided into
ten disjoint sets as follows, see Figure \ref{normalform}:
\begin{itemize}
\item $A=\left\{x\in Q_0~|~ x\text{ has valency 2 and is part of an
      r-cycle} \right\}$
\item $B=\left\{x\in Q_0~|~ x\text{ is part of two r-cycles}\right\}$
\item $C=\left\{x \in Q_0 ~|~ x\text{ has valency 2 or 3, with
      $\alpha_r$ and $\beta_s$ ending here}\right\}$
\item $\widetilde C=\left\{x \in Q_0 ~|~ x\text{ has valency 2 and is
      a source}\right\}$
\item $D = \left\{x\in Q_0 ~|~ x\text{ is the starting vertex of an
      r-arrow} \right\}$
\item $F=\left\{x\in Q_0 ~|~ x \text{ has valency 4 } \alpha_1 \text{
      starting here}\right\}$
\end{itemize}
The sets $A', B'$ and $D'$ are defined similarly as $A, B$ and $D$,
respectively, by replacing r-cycle with s-cycle and r-arrow with
s-arrow. The set $F'$ is defined similarly as $F$ by replacing
``$\alpha_1$ starting here'' with ``$\alpha_r$ ending here''.  We will
use the notation $D_{\geq 1}$ for the subset $D\setminus \{D_0\}$.

The types of vertices occurring in a quiver $Q$ in
$\mathcal{Q}_n^{\star}$ depends on the values of the parameters
$r_1,r_2,s_1$ and $s_2$. In particular one should note that if there
is a vertex of type $C,\widetilde{C}, F$ or $F'$ in $Q$,
then there is only one vertex of the respective type. Also, a vertex
of type $F'$ will only occur in the case when both $r_1=0$ and
$s_1=0$. Vertices of type $B$ will only occur when $r_2>1$ and
vertices of type $D$ will only occur when $r_1>0$.
 
Moreover, as Theorem \ref{derivertekvivalent} states, the roles of
$r_i$ and $s_i$ can be interchanged while preserving the derived
equivalence. Therefore, it is sufficient to consider the cases listed
in Table \ref{parameterverdier}. Figure \ref{types} shows examples of
quivers in the cases 2--5. Case 6 is illustrated in Figure
\ref{normalform}. The first case is hereditary, and will not be
considered in this paper since its derived category is well-known
~\cite{happel}.

\begin{table}[h]
\caption{Variations of the normal form}\label{parameterverdier}
{\renewcommand{\arraystretch}{1.4}%
\begin{tabular}{|c|c|c|c|c|l|}
  \hline
  \textnumero & $r_1$ & $r_2$ & $s_1$ & $s_2$ & Possible vertices \\
  \hline
  1 & $\neq 0$ & $0$ & $\neq 0$ & $0$ & $\widetilde C, C, D, D'$  \\
  \hline
  2 & $0$ & $\neq 0$ & $0$ & $\neq 0$ & $A, A', B, B', F, F'$ \\
  \hline
  3 & $0$ & $\neq 0$ & $\neq 0$ & $0$ & $A,B,C,D'$ \\
  \hline
  4 & $\neq 0$ & $\neq 0$ & $0$ & $\neq 0$ & $A, A', B, B', C,
  D, F$ \\
  \hline
  5 & $\neq 0$ & $\neq 0$ & $\neq 0$ & $0$ & $A,B,C,D,D'$ \\
  \hline
   6 & $\neq 0$ & $\neq 0$ & $\neq 0$ & $\neq 0$ &
   $A, A',B, B',C,D,D',F$ \\
   \hline
\end{tabular}}
\end{table}

\begin{figure}[h]
  \begin{subfigure}[b]{0.22\textwidth}
    \centering
    \begin{tikzpicture}
      \node (F1) at (60:0.8cm) [tvertex] {$F$};
      \node (B) at (180:0.8cm) [tvertex] {$B_1$};
      \node (F2) at (300:0.8cm) [tvertex] {$F'$};
      \node (A1) at (120:1.45cm) [tvertex] {$A_1$};
      \node (A2) at (240:1.45cm) [tvertex] {$A_2$};
      \node (A1') at (0:1.45cm) [tvertex] {$A_1'$};
      \draw [->] (A1)--(F1);
      \draw [->] (B)--(A1);
      \draw [->] (A2)--(B);
      \draw [->] (F2)--(A2);
      \draw [->] (A1')--(F1);
      \draw [->] (F2)--(A1');
      \draw [->, bend right] (F1) to (B);
      \draw [->, bend right] (B) to (F2);
      \draw [->, bend left] (F1) to (F2);
    \end{tikzpicture}  
    \caption{\textnumero~~2}
    \label{Qtype2}
  \end{subfigure}
  \begin{subfigure}[b]{0.22\textwidth}
    \centering
      \begin{tikzpicture}
      \node (Et) at (0,0) [tvertex] {$C$};
      \node (E) at (1.5,0) [tvertex] {$D_0'$};
      \node (A) at (0.75,1.2) [tvertex] {$A_1$};
      \draw [->, bend right] (E) to (Et);
      \draw [->] (Et) to (A);
      \draw [->] (A) to (E);
      \draw [->, bend left] (E) to (Et);
    \end{tikzpicture}
    \caption{\textnumero~~3}
    \label{Qtype3}
  \end{subfigure}
  \begin{subfigure}[b]{0.22\textwidth}
    \centering
    \begin{tikzpicture}
      \node (F) at (90:0.8cm) [tvertex] {$F$};
      \node (E) at (180:0.8cm) [tvertex] {$D_0$};
      \node (D) at (270:0.8cm) [tvertex] {$D_1$};
      \node (Et) at (0:0.8cm) [tvertex] {$C$};
      \node (A) at (135:1.45cm) [tvertex] {$A_1$};
      \node (A') at (45:1.45cm) [tvertex] {$A_1'$};
      \draw [->] (A)--(F);
      \draw [->] (E)--(A);
      \draw [->] (A')--(F);
      \draw [->] (Et)--(A');
      \draw [->, bend right] (F) to (E);
      \draw [->, bend right] (E) to (D);
      \draw [->, bend left] (F) to (Et);
      \draw [->, bend right] (D) to (Et);
    \end{tikzpicture}
    \caption{\textnumero~~4}
    \label{Qtype4}
  \end{subfigure}
  \begin{subfigure}[b]{0.22\textwidth}
    \centering
    \begin{tikzpicture}
      \node (E') at (90:0.8cm) [tvertex] {$D_0'$};
      \node (E) at (180:0.8cm) [tvertex] {$D_0$};
      \node (C) at (270:0.8cm) [tvertex] {$C$};
      \node (D) at (0:0.8cm) [tvertex] {$D_1'$};
      \node (A) at (135:1.45cm) [tvertex] {$A_1$};
      \draw [->] (A)--(E');
      \draw [->] (E)--(A);
      \draw [->, bend right] (E') to (E);
      \draw [->, bend right] (E) to (C);
      \draw [->, bend left] (E') to (D);
      \draw [->, bend left] (D) to (C);
    \end{tikzpicture}
    \caption{\textnumero~~5}
    \label{Qtype5}
  \end{subfigure}
  \caption{Some quivers in $\mathcal{Q}_n^{\star}$.}\label{types}
\end{figure}

\section{{\rm r}-walks and {\rm s}-walks}
In this section we define r- and s-walks, which are special ways of
traversing a quiver in $\mathcal Q_n^{\star}$. We will also define
reduction of homotopy strings.  The walks and reductions will later be
used to describe AR-triangles in $\kbp$.

Let $Q$ be a quiver in $\mathcal Q_n^{\star}$.  For a vertex $x\in
Q_0$ let a \textit{walk starting in $x$} be a possibly infinite series
of homotopy strings $[w_1, w_2, \ldots]$ with $sw_1 = x$, $sw_i
=tw_{i-1}$ for $i>1$ and such that $w_n\cdots w_2w_1$ for any $n>1$ is
a homotopy string. However, it is not necessary that $w_i\cdot
w_{i-1}$ is defined as composition of homotopy strings.

We define a clockwise r-walk $W = [w_1, w_2, \ldots]$ starting in a
vertex $x$ in $A\cup D$: It is recursively defined by the
function $cw\_r(x)$, where if $x$ is the vertex
\begin{itemize}
\item $A_i$ for $i> 1$: then $cw\_r(x) =
  \gamma_{2(i-1)}\gamma_{2i-1}$, going from $A_i$ to $A_{i-1}$
\item $A_1$: $\left\{ \begin{array}{ll} r_1 = 0 & \text{then
        $cw\_r(x) = \gamma_{2r_2}\beta_s\cdots \beta_1\gamma_1$, going from $A_1$ to $A_{r_2}$ } \\
      r_1 > 0 & \text{then $cw\_r(x) = \alpha_r^{-1} \beta_s
        \cdots \beta_1 \gamma_1$, going from $A_1$ to $D_{r_1-1}$} \\
    \end{array} \right.$
\item $D_i$ for $i>0$: then $cw\_r(x) = \alpha_{r_2 +i}^{-1}$, going
  from $D_i$ to $D_{i-1}$
\item $D_0$: $\left\{ \begin{array}{ll} r_2 = 0 & \text{then $cw\_r(x)
        =
        \alpha_r^{-1}\beta_s\cdots \beta_1$, going from $D_0$ to $D_{r_1-1}$} \\
      r_2 > 0 & \text{then $cw\_r(x) = \gamma_{2r_2}$, going from $D_0$ to $A_{r_2}$} \\
		\end{array} \right.$
\end{itemize}
where the vertices $A_i$ and $D_i$ are as shown in Figure
\ref{normalform}. We define $w_1 = cw\_r(x)$. Observe that the vertex
$t(w_1)$ is always in $A\cup D$. Further, we define the $i$th step of
the clockwise r-walk to be $w_i = cw\_r(t(w_{i-1}))$ for
$i>1$. Observe that $w_i = w_{i+r}$ for $i \geq 1$.  Note that
$cw\_r(x)$ is always the shortest homotopy string from $x$ to $t(w_1)$
in the clockwise direction.

A clockwise r-walk is illustrated in Figure \ref{cwr}. The vertices of
type $A \cup D$ are marked with $\star$, and the paths between the
$\star$'s in the clockwise direction are the steps of the walk.

Now we extend the function $cw\_r(x)$ to vertices $x$ of type
$B,B',C,D',F$ or $F'$. The \emph{clockwise r-prefix} is the shortest
clockwise homotopy string $w$ with $s(w) = x$, such that there exists
some homotopy string $w'$ such that $ww' = cw\_r(y)$ for some vertex
$y \in A\cup D$. From the above, it follows that $w$ is unique, and we
denote it by $cw\_r\_p(x)$.  The extended definition of the clockwise
r-walk is then
\[
cw\_r(x) = \begin{cases}
  cw\_r(x) & x \in A \cup D \\
  cw\_r\_p(x) & x \in B \cup B' \cup C \cup D' \cup F \cup F'
  \end{cases}
\]

Note that according to Table \ref{parameterverdier}, we always have
$r_2 > 0$.  However, we have included the case $r_2 = 0$ in the
definition of clockwise r-walk, as this is needed to give a complete
definition of counter-clockwise s-walk which will be defined as a
mirror image of the clockwise r-walk.

Next we define a \emph{counter-clockwise r-walk} $V =
[v_1,v_2,\ldots]$ starting in a vertex $x$ in $A \cup C \cup D_{\geq
  1}$. It is recursively defined by the function $ccw\_r(x)$, given by
\begin{itemize}
\item $A_i$ for $i < r_2$: then $ccw\_r(x) =
  \gamma_{(2(i+1)-1)}^{-1}\gamma_{2i}^{-1}$, going from $A_i$ to $A_{i+1}$
\item $A_{r_2}$: $\left\{ \begin{array}{ll}
      r_1 = 0 & \text{then $ccw\_r(x)
        =\gamma_1^{-1}\beta_1^{-1}\cdots\beta_{s}^{-1}\gamma_{2r_2}^{-1}$,
        going from $A_{r_2}$ to $A_1$} \\
      r_1 > 0 & \text{then  $ccw\_r(x) =
        \alpha_{r_2+1}\gamma_{2r_2}^{-1}$,}\\
      & \text{going from $A_{r_2}$ to $D_1$, or from $A_{r_2}$ to $C$ } \\
    \end{array} \right.$
\item $C$: $\left\{ \begin{array}{ll} r_2 = 0 & \text{then $ccw\_r(x)
        =
        \alpha_{1}\beta_{1}^{-1}\cdots \beta_s^{-1}$,}\\ &\text{going from $C$ to $C$, or from $C$ to $D_1$} \\
      r_2 > 0 & \text{then, $ccw\_r(x) =
        \gamma_1^{-1}\beta_1^{-1}\cdots \beta_s^{-1}$, going from $C$ to $A_{1}$} \\
    \end{array} \right.$
\item $D_i$ for $1 \leq i \leq r_1 -1$: then $ccw\_r(x) = \alpha_{r_2
    + i + 1}$, going from $D_i$ to $D_{i+1}$, or from $D_i$ to $C$
\end{itemize}
As for the clockwise case, we define $v_1 = ccw\_r(x)$, and the $i$th
step of the counter-clockwise r-walk is defined by $v_i =
ccw\_r(t(v_{i-1}))$ for $i > 1$. See Figure \ref{ccwr} for an
illustration of the steps in a counter-clockwise r-walk.

The \emph{counter-clockwise r-prefix} for a vertex $x$ in
$B,B',D_0,D',F$ or $F'$ is defined as follows: It is the shortest
counter-clockwise homotopy string $v$ with $s(v) = x$, such that there
exists some homotopy string $v'$ such that $vv' = ccw\_r(y)$ for some
vertex $y \in A\cup C \cup D_{\geq 1}$. Again, $v$ is
unique, and we denote it by $ccw\_r\_p(x)$.  We extend the
counter-clockwise r-walk as follows:
\[
ccw\_r(x) = \begin{cases}
  ccw\_r(x) & x \in A \cup C \cup D_{\geq 1} \\
  ccw\_r\_p(x) & x \in B \cup B' \cup D_0 \cup D' \cup F \cup F'
  \end{cases}
\]

For the same reason as for the clockwise r-walk, we have included
the case when $r_2 = 0$.

A \emph{counter-clockwise s-walk} is defined to be the mirror image of
a clockwise r-walk, see Figure \ref{ccws}.  A \emph{clockwise s-walk}
is defined as the mirror image of a counter-clockwise r-walk, see
Figure \ref{cws}.

\begin{figure}[t]
  \centering
  \begin{subfigure}[b]{0.46\textwidth}
    \begin{tikzpicture}
      \node (1) at (90:2.25cm) [nvertex] {};
      \node (2) at (110:2.25cm) [nvertex] {};
      \node (3) at (130:2.25cm) [nvertex] {};
      \node (4) at (150:2.25cm) [nvertex] {};
      \node (5) at (170:2.25cm) [nvertex] {};
      \node (6) at (190:2.25cm) [nvertex] {};
      \node (7) at (210:2.25cm) [tvertex] {$\star$};
      \node (8) at (230:2.25cm) [tvertex] {$\star$};
      \node (9) at (250:2.25cm) [tvertex] {$\star$};
      \node (10) at (270:2.25cm) [tvertex] {$\star$};
      \node (11) at (290:2.25cm) [tvertex] {$\star$};
      \node (12) at (310:2.25cm) [tvertex] {$\star$};
      \node (13) at (330:2.25cm) [nvertex] {};
      \node (14) at (350:2.25cm) [nvertex] {};
      \node (15) at (10:2.25cm) [nvertex] {};
      \node (16) at (30:2.25cm) [nvertex] {};
      \node (17) at (50:2.25cm) [nvertex] {};
      \node (18) at (70:2.25cm) [nvertex] {};
      \node (19) at (100:3cm) [tvertex] {$\star$};
      \node (20) at (140:3cm) [tvertex] {$\star$};
      \node (21) at (180:3cm) [tvertex] {$\star$};
      \node (22) at (200:3cm) [tvertex] {$\star$};
      \node (23) at (40:3cm) [nvertex] {};
      \node (24) at (80:3cm) [nvertex] {};
      \draw [->, snake=coil, segment amplitude=1pt, segment length =
      3.5pt, segment aspect=0,>=latex, line after snake=2.5pt] (19)--(1);
      \draw [->] (1)--(2);
      \draw [->, snake=coil, segment amplitude=1pt, segment length =
      3.5pt, segment aspect=0,>=latex, line after snake=2.5pt] (2)--(19);
      \draw [->, snake=coil, segment amplitude=1pt, segment length =
      3.5pt, segment aspect=0,>=latex, line after snake=2.5pt] (20)--(3);
      \draw [->] (3)--(4);
      \draw [->, snake=coil, segment amplitude=1pt, segment length =
      3.5pt, segment aspect=0,>=latex, line after snake=2.5pt] (4)--(20);
      \draw [->, snake=coil, segment amplitude=1pt, segment length =
      3.5pt, segment aspect=0,>=latex, line after snake=2.5pt] (21)--(5);
      \draw [->] (5)--(6);
      \draw [->, snake=coil, segment amplitude=1pt, segment length =
      3.5pt, segment aspect=0,>=latex, line after snake=2.5pt] (6)--(21);
      \draw [->, snake=coil, segment amplitude=1pt, segment length =
      3.5pt, segment aspect=0,>=latex, line after snake=2.5pt] (22)--(6);
      \draw [->] (6)--(7);
      \draw [->, snake=coil, segment amplitude=1pt, segment length =
      3.5pt, segment aspect=0,>=latex, line after snake=2.5pt] (7)--(22);
      \draw [->] (23)--(17);
      \draw [->, snake=coil, segment amplitude=1pt, segment length =
      3.5pt, segment aspect=0,>=latex, line after snake=2.5pt] (17)--(16);
      \draw [->] (16)--(23);
      \draw [->] (24)--(1);
      \draw [->, snake=coil, segment amplitude=1pt, segment length =
      3.5pt, segment aspect=0,>=latex, line after snake=2.5pt] (1)--(18);
      \draw [->] (18)--(24);
      \draw [->, snake=coil, segment amplitude=1pt, segment length =
      3.5pt, segment aspect=0,>=latex, line after snake=2.5pt] (7)--(8);
      \draw [->, snake=coil, segment amplitude=1pt, segment length =
      3.5pt, segment aspect=0,>=latex, line after snake=2.5pt] (9)--(10);
      \draw [->, snake=coil, segment amplitude=1pt, segment length =
      3.5pt, segment aspect=0,>=latex, line after snake=2.5pt] (11)--(12);
      \draw [->, snake=coil, segment amplitude=1pt, segment length =
      3.5pt, segment aspect=0,>=latex, line after snake=2.5pt] (12)--(13);
      \draw [->, snake=coil, segment amplitude=1pt, segment length =
      3.5pt, segment aspect=0,>=latex, line after snake=2.5pt] (14)--(13);
      \draw [->, snake=coil, segment amplitude=1pt, segment length =
      3.5pt, segment aspect=0,>=latex, line after snake=2.5pt] (16)--(15);
      \draw [dotted] (2)--(3);
      \draw [dotted] (5)--(4);
      \draw [dotted] (8)--(9);
      \draw [dotted] (10)--(11);
      \draw [dotted] (15)--(14);
      \draw [dotted] (18)--(17);
      \draw [->] (60:1.5cm) .. controls (20:1.7cm) and (-20:1.7cm) .. (-60:1.5cm);
    \end{tikzpicture}
    \caption{A clockwise r-walk.}
    \label{cwr}
  \end{subfigure}
  \begin{subfigure}[b]{0.46\textwidth}
    \begin{tikzpicture}
      \node (1) at (90:2.25cm) [nvertex] {};
      \node (2) at (110:2.25cm) [nvertex] {};
      \node (3) at (130:2.25cm) [nvertex] {};
      \node (4) at (150:2.25cm) [nvertex] {};
      \node (5) at (170:2.25cm) [nvertex] {};
      \node (6) at (190:2.25cm) [nvertex] {};
      \node (7) at (210:2.25cm) [nvertex] {};
      \node (8) at (230:2.25cm) [tvertex] {$\star$};
      \node (9) at (250:2.25cm) [tvertex] {$\star$};
      \node (10) at (270:2.25cm) [tvertex] {$\star$};
      \node (11) at (290:2.25cm) [tvertex] {$\star$};
      \node (12) at (310:2.25cm) [tvertex] {$\star$};
      \node (13) at (330:2.25cm) [tvertex] {$\star$};
      \node (14) at (350:2.25cm) [nvertex] {};
      \node (15) at (10:2.25cm) [nvertex] {};
      \node (16) at (30:2.25cm) [nvertex] {};
      \node (17) at (50:2.25cm) [nvertex] {};
      \node (18) at (70:2.25cm) [nvertex] {};
      \node (19) at (100:3cm) [tvertex] {$\star$};
      \node (20) at (120:3cm) [tvertex] {$\star$};
      \node (21) at (160:3cm) [tvertex] {$\star$};
      \node (22) at (200:3cm) [tvertex] {$\star$};
      \node (23) at (40:3cm) [nvertex] {};
      \node (24) at (80:3cm) [nvertex] {};
      \draw [->, snake=coil, segment amplitude=1pt, segment length =
      3.5pt, segment aspect=0,>=latex, line after snake=2.5pt] (19)--(1);
      \draw [->] (1)--(2);
      \draw [->, snake=coil, segment amplitude=1pt, segment length =
      3.5pt, segment aspect=0,>=latex, line after snake=2.5pt] (2)--(19);
      \draw [->, snake=coil, segment amplitude=1pt, segment length =
      3.5pt, segment aspect=0,>=latex, line after snake=2.5pt] (20)--(2);
      \draw [->] (2)--(3);
      \draw [->, snake=coil, segment amplitude=1pt, segment length =
      3.5pt, segment aspect=0,>=latex, line after snake=2.5pt] (3)--(20);
      \draw [->, snake=coil, segment amplitude=1pt, segment length =
      3.5pt, segment aspect=0,>=latex, line after snake=2.5pt] (21)--(4);
      \draw [->] (4)--(5);
      \draw [->, snake=coil, segment amplitude=1pt, segment length =
      3.5pt, segment aspect=0,>=latex, line after snake=2.5pt] (5)--(21);
      \draw [->, snake=coil, segment amplitude=1pt, segment length =
      3.5pt, segment aspect=0,>=latex, line after snake=2.5pt] (22)--(6);
      \draw [->] (6)--(7);
      \draw [->, snake=coil, segment amplitude=1pt, segment length =
      3.5pt, segment aspect=0,>=latex, line after snake=2.5pt] (7)--(22);
      \draw [->] (23)--(17);
      \draw [->, snake=coil, segment amplitude=1pt, segment length =
      3.5pt, segment aspect=0,>=latex, line after snake=2.5pt] (17)--(16);
      \draw [->] (16)--(23);
      \draw [->] (24)--(1);
      \draw [->, snake=coil, segment amplitude=1pt, segment length =
      3.5pt, segment aspect=0,>=latex, line after snake=2.5pt] (1)--(18);
      \draw [->] (18)--(24);
      \draw [->, snake=coil, segment amplitude=1pt, segment length =
      3.5pt, segment aspect=0,>=latex, line after snake=2.5pt] (7)--(8);
      \draw [->, snake=coil, segment amplitude=1pt, segment length =
      3.5pt, segment aspect=0,>=latex, line after snake=2.5pt] (8)--(9);
      \draw [->, snake=coil, segment amplitude=1pt, segment length =
      3.5pt, segment aspect=0,>=latex, line after snake=2.5pt] (10)--(11);
      \draw [->, snake=coil, segment amplitude=1pt, segment length =
      3.5pt, segment aspect=0,>=latex, line after snake=2.5pt] (12)--(13);
      \draw [->, snake=coil, segment amplitude=1pt, segment length =
      3.5pt, segment aspect=0,>=latex, line after snake=2.5pt] (14)--(13);
      \draw [->, snake=coil, segment amplitude=1pt, segment length =
      3.5pt, segment aspect=0,>=latex, line after snake=2.5pt] (16)--(15);
      \draw [dotted] (3)--(4);
      \draw [dotted] (6)--(5);
      \draw [dotted] (9)--(10);
      \draw [dotted] (11)--(12);
      \draw [dotted] (15)--(14);
      \draw [dotted] (18)--(17);
      \draw [<-] (60:1.5cm) .. controls (20:1.7cm) and (-20:1.7cm) .. (-60:1.5cm);
    \end{tikzpicture}
    \caption{A counter-clockwise r-walk.}
    \label{ccwr}
  \end{subfigure}

\vspace{10pt}

  \begin{subfigure}[b]{0.46\textwidth}
    \reflectbox{
    \begin{tikzpicture}
      \node (1) at (90:2.25cm) [nvertex] {};
      \node (2) at (110:2.25cm) [nvertex] {};
      \node (3) at (130:2.25cm) [nvertex] {};
      \node (4) at (150:2.25cm) [nvertex] {};
      \node (5) at (170:2.25cm) [nvertex] {};
      \node (6) at (190:2.25cm) [nvertex] {};
      \node (7) at (210:2.25cm) [nvertex] {};
      \node (8) at (230:2.25cm) [tvertex] {$\star$};
      \node (9) at (250:2.25cm) [tvertex] {$\star$};
      \node (10) at (270:2.25cm) [tvertex] {$\star$};
      \node (11) at (290:2.25cm) [tvertex] {$\star$};
      \node (12) at (310:2.25cm) [tvertex] {$\star$};
      \node (13) at (330:2.25cm) [tvertex] {$\star$};
      \node (14) at (350:2.25cm) [nvertex] {};
      \node (15) at (10:2.25cm) [nvertex] {};
      \node (16) at (30:2.25cm) [nvertex] {};
      \node (17) at (50:2.25cm) [nvertex] {};
      \node (18) at (70:2.25cm) [nvertex] {};
      \node (19) at (100:3cm) [tvertex] {$\star$};
      \node (20) at (120:3cm) [tvertex] {$\star$};
      \node (21) at (160:3cm) [tvertex] {$\star$};
      \node (22) at (200:3cm) [tvertex] {$\star$};
      \node (23) at (40:3cm) [nvertex] {};
      \node (24) at (80:3cm) [nvertex] {};
      \draw [->, snake=coil, segment amplitude=1pt, segment length =
      3.5pt, segment aspect=0,>=latex, line after snake=2.5pt] (19)--(1);
      \draw [->] (1)--(2);
      \draw [->, snake=coil, segment amplitude=1pt, segment length =
      3.5pt, segment aspect=0,>=latex, line after snake=2.5pt] (2)--(19);
      \draw [->, snake=coil, segment amplitude=1pt, segment length =
      3.5pt, segment aspect=0,>=latex, line after snake=2.5pt] (20)--(2);
      \draw [->] (2)--(3);
      \draw [->, snake=coil, segment amplitude=1pt, segment length =
      3.5pt, segment aspect=0,>=latex, line after snake=2.5pt] (3)--(20);
      \draw [->, snake=coil, segment amplitude=1pt, segment length =
      3.5pt, segment aspect=0,>=latex, line after snake=2.5pt] (21)--(4);
      \draw [->] (4)--(5);
      \draw [->, snake=coil, segment amplitude=1pt, segment length =
      3.5pt, segment aspect=0,>=latex, line after snake=2.5pt] (5)--(21);
      \draw [->, snake=coil, segment amplitude=1pt, segment length =
      3.5pt, segment aspect=0,>=latex, line after snake=2.5pt] (22)--(6);
      \draw [->] (6)--(7);
      \draw [->, snake=coil, segment amplitude=1pt, segment length =
      3.5pt, segment aspect=0,>=latex, line after snake=2.5pt] (7)--(22);
      \draw [->] (23)--(17);
      \draw [->, snake=coil, segment amplitude=1pt, segment length =
      3.5pt, segment aspect=0,>=latex, line after snake=2.5pt] (17)--(16);
      \draw [->] (16)--(23);
      \draw [->] (24)--(1);
      \draw [->, snake=coil, segment amplitude=1pt, segment length =
      3.5pt, segment aspect=0,>=latex, line after snake=2.5pt] (1)--(18);
      \draw [->] (18)--(24);
      \draw [->, snake=coil, segment amplitude=1pt, segment length =
      3.5pt, segment aspect=0,>=latex, line after snake=2.5pt] (7)--(8);
      \draw [->, snake=coil, segment amplitude=1pt, segment length =
      3.5pt, segment aspect=0,>=latex, line after snake=2.5pt] (8)--(9);
      \draw [->, snake=coil, segment amplitude=1pt, segment length =
      3.5pt, segment aspect=0,>=latex, line after snake=2.5pt] (10)--(11);
      \draw [->, snake=coil, segment amplitude=1pt, segment length =
      3.5pt, segment aspect=0,>=latex, line after snake=2.5pt] (12)--(13);
      \draw [->, snake=coil, segment amplitude=1pt, segment length =
      3.5pt, segment aspect=0,>=latex, line after snake=2.5pt] (14)--(13);
      \draw [->, snake=coil, segment amplitude=1pt, segment length =
      3.5pt, segment aspect=0,>=latex, line after snake=2.5pt] (16)--(15);
      \draw [dotted] (3)--(4);
      \draw [dotted] (6)--(5);
      \draw [dotted] (9)--(10);
      \draw [dotted] (11)--(12);
      \draw [dotted] (15)--(14);
      \draw [dotted] (18)--(17);
      \draw [->] (120:1.5cm) .. controls (160:1.7cm) and (-160:1.7cm) .. (-120:1.5cm);
    \end{tikzpicture}}
    \caption{A clockwise s-walk.}
    \label{cws}
  \end{subfigure}
  \begin{subfigure}[b]{0.46\textwidth}
    \centering
    \reflectbox{
    \begin{tikzpicture}
      \node (1) at (90:2.25cm) [nvertex] {};
      \node (2) at (110:2.25cm) [nvertex] {};
      \node (3) at (130:2.25cm) [nvertex] {};
      \node (4) at (150:2.25cm) [nvertex] {};
      \node (5) at (170:2.25cm) [nvertex] {};
      \node (6) at (190:2.25cm) [nvertex] {};
      \node (7) at (210:2.25cm) [tvertex] {$\star$};
      \node (8) at (230:2.25cm) [tvertex] {$\star$};
      \node (9) at (250:2.25cm) [tvertex] {$\star$};
      \node (10) at (270:2.25cm) [tvertex] {$\star$};
      \node (11) at (290:2.25cm) [tvertex] {$\star$};
      \node (12) at (310:2.25cm) [tvertex] {$\star$};
      \node (13) at (330:2.25cm) [nvertex] {};
      \node (14) at (350:2.25cm) [nvertex] {};
      \node (15) at (10:2.25cm) [nvertex] {};
      \node (16) at (30:2.25cm) [nvertex] {};
      \node (17) at (50:2.25cm) [nvertex] {};
      \node (18) at (70:2.25cm) [nvertex] {};
      \node (19) at (100:3cm) [tvertex] {$\star$};
      \node (20) at (140:3cm) [tvertex] {$\star$};
      \node (21) at (180:3cm) [tvertex] {$\star$};
      \node (22) at (200:3cm) [tvertex] {$\star$};
      \node (23) at (40:3cm) [nvertex] {};
      \node (24) at (80:3cm) [nvertex] {};
      \draw [->, snake=coil, segment amplitude=1pt, segment length =
      3.5pt, segment aspect=0,>=latex, line after snake=2.5pt] (19)--(1);
      \draw [->] (1)--(2);
      \draw [->, snake=coil, segment amplitude=1pt, segment length =
      3.5pt, segment aspect=0,>=latex, line after snake=2.5pt] (2)--(19);
      \draw [->, snake=coil, segment amplitude=1pt, segment length =
      3.5pt, segment aspect=0,>=latex, line after snake=2.5pt] (20)--(3);
      \draw [->] (3)--(4);
      \draw [->, snake=coil, segment amplitude=1pt, segment length =
      3.5pt, segment aspect=0,>=latex, line after snake=2.5pt] (4)--(20);
      \draw [->, snake=coil, segment amplitude=1pt, segment length =
      3.5pt, segment aspect=0,>=latex, line after snake=2.5pt] (21)--(5);
      \draw [->] (5)--(6);
      \draw [->, snake=coil, segment amplitude=1pt, segment length =
      3.5pt, segment aspect=0,>=latex, line after snake=2.5pt] (6)--(21);
      \draw [->, snake=coil, segment amplitude=1pt, segment length =
      3.5pt, segment aspect=0,>=latex, line after snake=2.5pt] (22)--(6);
      \draw [->] (6)--(7);
      \draw [->, snake=coil, segment amplitude=1pt, segment length =
      3.5pt, segment aspect=0,>=latex, line after snake=2.5pt] (7)--(22);
      \draw [->] (23)--(17);
      \draw [->, snake=coil, segment amplitude=1pt, segment length =
      3.5pt, segment aspect=0,>=latex, line after snake=2.5pt] (17)--(16);
      \draw [->] (16)--(23);
      \draw [->] (24)--(1);
      \draw [->, snake=coil, segment amplitude=1pt, segment length =
      3.5pt, segment aspect=0,>=latex, line after snake=2.5pt] (1)--(18);
      \draw [->] (18)--(24);
      \draw [->, snake=coil, segment amplitude=1pt, segment length =
      3.5pt, segment aspect=0,>=latex, line after snake=2.5pt] (7)--(8);
      \draw [->, snake=coil, segment amplitude=1pt, segment length =
      3.5pt, segment aspect=0,>=latex, line after snake=2.5pt] (9)--(10);
      \draw [->, snake=coil, segment amplitude=1pt, segment length =
      3.5pt, segment aspect=0,>=latex, line after snake=2.5pt] (11)--(12);
      \draw [->, snake=coil, segment amplitude=1pt, segment length =
      3.5pt, segment aspect=0,>=latex, line after snake=2.5pt] (12)--(13);
      \draw [->, snake=coil, segment amplitude=1pt, segment length =
      3.5pt, segment aspect=0,>=latex, line after snake=2.5pt] (14)--(13);
      \draw [->, snake=coil, segment amplitude=1pt, segment length =
      3.5pt, segment aspect=0,>=latex, line after snake=2.5pt] (16)--(15);
      \draw [dotted] (2)--(3);
      \draw [dotted] (5)--(4);
      \draw [dotted] (8)--(9);
      \draw [dotted] (10)--(11);
      \draw [dotted] (15)--(14);
      \draw [dotted] (18)--(17);
      \draw [<-] (120:1.5cm) .. controls (160:1.7cm) and (-160:1.7cm) .. (-120:1.5cm);
    \end{tikzpicture}}
    \caption{A counter-clockwise s-walk.}
    \label{ccws}
  \end{subfigure}
  \caption{}\label{fig:}
  \label{walks}
\end{figure}

\subsection{Reduction of a homotopy string}\label{redsubkap}
Let $\omega$ be a non-trivial and non-empty homotopy string with one
of the following properties:
\begin{enumerate}[(i)]
  \item $t(\omega) \in A \cup D$, and such that
    $\alpha_l(\omega)$ is the last letter in the $r$th step of the
    clockwise r-walk starting in $t(\omega)$,\label{condition1}
  \item $t(\omega) \in A' \cup D'$, and such that
    $\alpha_l(\omega)$ is the last letter in the $s$th step of the
    counter-clockwise s-walk starting in $t(\omega)$,\label{condition2}
  \item $t(\omega) \in A \cup C \cup D_{\geq 1}$, and such that
    $\alpha_l(\omega)$ is the last letter in the $r$th step of the
    counter-clockwise r-walk starting in $t(\omega)$,\label{condition3}
  \item $t(\omega) \in A' \cup C \cup D_{\geq 1}'$, and such that
    $\alpha_l(\omega)$ is the last letter in the $s$th step of the
    clockwise s-walk starting in $t(\omega)$.\label{condition4}
\end{enumerate}
Observe that a homotopy string $\omega$ satisfies at most one of these
properties.

\begin{definition}\label{reduction1}
  Let $\omega$ be a homotopy string satisfying property (i).  Let
  $w_r$ be the $r$th step of the clockwise r-walk starting
  in $t(\omega)$.  We define the \emph{clockwise r-reduction} of
  $\omega$ to be $\omega'$, where $\omega = \sigma \omega'$ for a
  non-trivial homotopy string $\sigma$ satisfying
  \begin{itemize}
  \item $w_r = \sigma \hat{w}$ for some homotopy string $\hat{w}$, and
  \item there is no $\sigma'$ such that $\omega = \sigma' \omega''$
    and $w_r = \sigma' \widetilde{w}$ with $l(\sigma') > l(\sigma)$.
  \end{itemize}
\end{definition}
Similarly, we define \emph{counter-clockwise r-reduction} by replacing
property (i) by property (iii), and by letting $w_r$ be the $r$th step
of the counter-clockwise r-walk starting in $t(\omega)$.  The
\emph{clockwise s-reduction} and \emph{counter-clockwise s-reduction}
are defined analogously.

Note that for some homotopy strings $\omega$, the reduction
removes $\omega$ itself -- that is, $\omega = \sigma \omega'$ where
$\sigma = \omega$. In this case, $\omega'$ is the trivial homotopy
string $1_{s\omega}^{\varepsilon}$ such that $\omega \cdot
1_{s\omega}^{\varepsilon}$ is defined as composition of homotopy
strings.  To do this, we fix the string functions $S$ and $T$
described in Definition \ref{gentlealternativ}.  See Appendix
\ref{SandT} for details.

\begin{exmp}
  Recall Example \ref{ex:hovedeks}. Consider the homotopy string
  $\omega = bfed$ associated with $\Lambda$.  It is clear that
  $\omega$ satisfies property (i).  Then the homotopy string $ed$ is
  the clockwise r-reduction of $\omega$.  Note that this homotopy
  string also satisfies property (i), and has the clockwise
  r-reduction $d$.  In the last case, the homotopy string we remove
  is the clockwise r-prefix for vertex $4$.

  Moreover, the homotopy string $\nu = jgd$ satisfies property (ii),
  and has counter-clockwise r-reduction $gd$.  Here, the homotopy
  string we remove is the counter-clockwise r-prefix for vertex $6$.
  Note that the homotopy string $gd$ does not satisfy properties
  (i)--(iv) and hence does not have a reduction.
\end{exmp}
\section{Almost split triangles for string
  complexes}\label{almostsplittriangels}
Let $\Lambda$ be a cluster-tilted algebra of type $\ant$, and let $\omega[m]$
be a string complex in $\kbp$. In this section, we give explicit
calculations of the AR-triangle starting in $\omega[m]$ and
the AR-triangle ending in $\omega[m]$. 

Theorem \ref{bobinskimain} states that the almost split sequence
starting in $\omega[m]$ is of the form
\[
\omega[m] \rightarrow \omega^+[m+m'(\omega)]\oplus
\omega_+[m] \rightarrow \omega^+_+[m+m''(\omega)] \rightarrow \omega[m-1]
\]
where all the involved homotopy strings and integers can be found
combinatorially by using Bobi\'nski's algorithm (see Appendix
\ref{bobinskiappendix}).  Figure \ref{omeganeighbours} shows the
AR-triangle ending in $\omega[m]$, and the AR-triangle starting in
$\omega[m]$.

\begin{figure}[h!]
\[
\begin{tikzpicture}[scale=1.5]
  \node (omega) at (0,0) [yvertex] {$\omega[m]$};
  \node (+omega) at (1,1) [yvertex] {$\omega^+[m+m'(\omega)]$};
  \node (omega+) at (1,-1) [yvertex] {$\omega_+[m]$};
  \node (-omega) at (-1,1) [yvertex] {$\omega^-[m]$};
  \node (omega-) at (-1,-1) [yvertex] {$\omega_-[m-m'(\omega_-)]$};
  \node (+omega+) at (2,0) [yvertex] {$\omega^+_+[m+m''(\omega)]$};
  \node (-omega-) at (-2,0) [yvertex] {${}\omega^-_-[m-m''(\omega^-_-)]$};
  \draw [->] (omega)--(+omega);
  \draw [->] (omega)--(omega+);
  \draw [->] (-omega)--(omega);
  \draw [->] (omega-)--(omega);
  \draw [->] (omega+)--(+omega+);
  \draw [->] (+omega)--(+omega+);
  \draw [->] (-omega-)--(omega-);
  \draw [->] (-omega-)--(-omega);
  \draw [dotted, ->] (+omega)--(-omega);
  \draw [dotted, ->] (omega+)--(omega-);
  \draw [dotted, ->] (omega)--(-omega-);
  \draw [dotted, ->] (+omega+)--(omega);
\end{tikzpicture}
\]
\caption{The AR-triangles starting and ending in $\omega[m]$.}\label{omeganeighbours}
\end{figure}

\begin{lemma}[\cite{bob}]\label{omegaminuslemma}
We have that $\omega_+ = ((\omega^{-1})^+)^{-1}$ and $\omega^- =
((\omega^{-1})_-)^{-1}$.
\end{lemma}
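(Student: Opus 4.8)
The plan is to reduce both identities to the defining properties of the operators $(-)^+$, $(-)_+$, $(-)^-$, $(-)_-$ via Theorem \ref{bobinskimain}(ii) together with the observation $X_{m,\omega} \cong X_{m+\deg\omega,\omega^{-1}}$, i.e.\ $\omega[m] = \omega^{-1}[m+\deg\omega]$ as objects of $\kbp$. The key point is that the almost split triangle starting in a given indecomposable object is unique up to isomorphism, so two a priori different combinatorial recipes that produce almost split triangles on the \emph{same} object must produce isomorphic ones, and in particular the string-and-degree data they output must agree after applying the identification $\nu[a] \cong \nu^{-1}[a+\deg\nu]$.

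First I would write down, using Theorem \ref{bobinskimain}(ii) applied to the homotopy string $\omega^{-1}$ and degree $m + \deg\omega$, the almost split triangle starting in $\omega^{-1}[m+\deg\omega]$:
\[
\omega^{-1}[m+\deg\omega] \to (\omega^{-1})^+[m+\deg\omega+m'(\omega^{-1})] \oplus (\omega^{-1})_+[m+\deg\omega] \to (\omega^{-1})^+_+[\cdots] \to \omega^{-1}[m+\deg\omega-1].
\]
Since $\omega^{-1}[m+\deg\omega] \cong \omega[m]$, this is also an almost split triangle starting in $\omega[m]$, hence isomorphic to the one displayed before the statement of the lemma. By the uniqueness of almost split triangles, the two middle terms must match up; the term $(\omega^{-1})_+[m+\deg\omega]$ — which has the \emph{same} degree shift relative to the base object as in the first line of Figure \ref{omeganeighbours}, namely none — must be isomorphic to $\omega^+[m+m'(\omega)]$. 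Applying the identification once more, $(\omega^{-1})_+[m+\deg\omega] \cong ((\omega^{-1})_+)^{-1}[m+\deg\omega - \deg((\omega^{-1})_+)]$, and comparing with $\omega^+[m+m'(\omega)]$ forces $\omega^+ = ((\omega^{-1})_+)^{-1}$, which upon inverting both sides gives $\omega_+ = ((\omega^{-1})^+)^{-1}$. The second identity $\omega^- = ((\omega^{-1})_-)^{-1}$ is obtained symmetrically, either by running the same argument for the almost split triangle \emph{ending} in $\omega[m]$, or simply by substituting $\omega \mapsto \omega^{-1}$ in the first identity and inverting, noting $(\omega^{-1})^{-1} = \omega$.

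The main obstacle is bookkeeping of the degree shifts: one must check that when the isomorphism $\nu[a]\cong\nu^{-1}[a+\deg\nu]$ is substituted into Bobi\'nski's formula, the relative degree shifts $m'$, $m''$ of the middle and right-hand terms transform consistently, so that the "positive-direction" summand of one triangle is genuinely matched with the "positive-direction" summand of the other rather than being swapped. Concretely one needs that $(-)_+$ and $(-)^+$ play interchangeable roles under inversion precisely because inversion reverses the direct/inverse character of every homotopy letter (so $\deg(\omega^{-1}) = -\deg\omega$ and the bobinskian "upward" and "downward" neighbours swap), which is exactly the content one extracts from the combinatorial definitions in Appendix \ref{bobinskiappendix}; once that symmetry is recorded, the matching is forced and the identities follow. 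An alternative, more computational route would be to unwind the definitions of $(-)^+$, $(-)_+$ directly from Bobi\'nski's algorithm and verify the identity letter-by-letter, but the uniqueness-of-AR-triangles argument is cleaner and I would present that.
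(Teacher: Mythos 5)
The paper does not actually prove this lemma --- it is quoted from Bobi\'nski \cite{bob} --- so your argument has to stand on its own, and it has a genuine gap at exactly the point you dismiss as ``bookkeeping''. Uniqueness of the almost split triangle starting in $\omega[m]\cong\omega^{-1}[m+\deg\omega]$ together with Krull--Schmidt gives only an \emph{unordered} matching: the pair of indecomposable middle-term summands produced by the algorithm applied to $\omega$ agrees, up to isomorphism, with the pair produced by the algorithm applied to $\omega^{-1}$; equivalently $\{\omega^{+},\omega_{+}\}=\{(\omega^{-1})^{+},(\omega^{-1})_{+}\}$ as homotopy strings up to inversion. The lemma is precisely the assertion of \emph{which} summand corresponds to which, and nothing in your argument forces that. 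The degree remark (``the same degree shift relative to the base object, namely none'') is not a usable invariant: the shift each summand carries is dictated by the strings and by $m'(\omega)$, $m'(\omega^{-1})$, $\deg\omega$, and there is no a priori reason the unshifted summand of one presentation must pair with the shifted summand of the other; in many examples all four complexes sit in the same degree anyway. Your fallback --- that $(-)^{+}$ and $(-)_{+}$ ``play interchangeable roles under inversion'' because inversion reverses the direct/inverse character of the letters --- is circular, since that interchange \emph{is} the statement being proved. Note also that Appendix \ref{bobinskiappendix} of this paper records Bobi\'nski's algorithm only for $\omega^{+}$ and $m'(\omega)$; inside this paper, $\omega_{+}$ and $\omega^{-}$ are accessed \emph{through} the very lemma you are proving, so the needed symmetry cannot be ``extracted'' from that appendix. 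It has to be checked against Bobi\'nski's independent construction, in which the two middle summands are attached to the two endpoints $t\omega$ and $s\omega$ of the string; since $t(\omega^{-1})=s\omega$, carrying out that endpoint comparison (the computational route you set aside) is the actual proof, not an optional alternative.

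A second, smaller error: the identity $\omega^{-}=((\omega^{-1})_{-})^{-1}$ does not follow ``by substituting $\omega\mapsto\omega^{-1}$ in the first identity and inverting''; that substitution merely returns the equivalent form $\omega^{+}=((\omega^{-1})_{+})^{-1}$ of the first identity. The minus operations belong to the almost split triangle \emph{ending} in $\omega[m]$, so the second identity requires its own run of the argument, with the same unresolved matching issue as above.
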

Hence, if we have a combinatorial description of $\omega^+$ and
$\omega_-$, then we also have a combinatorial description of all
homotopy strings shown in Figure \ref{omeganeighbours}.  The
combinatorial descriptions of $\omega^+$ and $\omega_-$ are given in
Tables \ref{omegaplus}--\ref{omegaminustrivial}.  In Tables
\ref{omegaplus} and \ref{omegaplustrivial} we include the integer
$m'(\omega)$.  The integer $m''(\omega)$ is equal to $m'(\omega)$ if
$\omega^+$ is non-empty, and otherwise equal to $m'(\omega_+)$.

\begin{proposition}\label{tabellproposisjon}
  Let $\omega[m]$ be a string complex. The middle term
  $\omega^+[m+m'(\omega)]$ in the AR-triangle starting in $\omega$ is
  given by the entries in Tables \ref{omegaplus} and
  \ref{omegaplustrivial}. The middle term $\omega_-[m-m'(\omega_-)]$
  in the AR-triangle ending in $\omega$ is given by the entries in
  Tables \ref{omegaminus} and \ref{omegaminustrivial}.
\end{proposition}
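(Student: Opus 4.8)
The plan is to reduce Proposition~\ref{tabellproposisjon} to Bobiński's algorithm (Theorem~\ref{bobinskimain} and the constructions in Appendix~\ref{bobinskiappendix}), showing that for each string complex $\omega[m]$ the homotopy string $\omega^+$ produced by that algorithm coincides with the entry of Table~\ref{omegaplus} or Table~\ref{omegaplustrivial} (and likewise $\omega_-$ with the tables for $\omega^-_-$, via Lemma~\ref{omegaminuslemma}). By Lemma~\ref{omegaminuslemma} it suffices to verify the claim for $\omega^+$ together with the value of $m'(\omega)$; the statement for $\omega_-$ then follows by applying the verified formula to $\omega^{-1}$ and inverting, and the rule ``$m''(\omega)=m'(\omega)$ if $\omega^+\neq\emptyset$, else $m'(\omega_+)$'' is exactly the output of the algorithm. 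So the real content is a single computation of $\omega^+$ carried out uniformly over the finite list of local configurations that can occur at $s(\omega)$.

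First I would recall from Appendix~\ref{bobinskiappendix} precisely how $\omega^+$ is built: it depends only on a bounded neighbourhood of the source vertex $s(\omega)$ in $Q'$, namely on the first letter $\alpha_1(\omega)$ (or, if $\omega$ is trivial, on the sign $\varepsilon$), together with the string functions $S,T$ that we have fixed (Appendix~\ref{SandT}). Because $\Lambda=kQ/I$ with $Q$ as in Figure~\ref{firstquiver} and $I$ generated by the length-two relations in the $3$-cycles, the possible local pictures at $s(\omega)$ are governed entirely by the vertex type: $A$, $A'$, $B$, $B'$, $C$, $\widetilde C$, $D$, $D'$, $F$, $F'$, in each of the cases 2--6 of Table~\ref{parameterverdier} (case~1 being hereditary and excluded). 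I would organise the verification as a case analysis over these vertex types, and within each type over whether $\alpha_1(\omega)$ is direct or inverse, an $\alpha_i$/$\beta_j$-arrow or a $3$-cycle arrow $\gamma_k$/$\delta_k$.

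The key step in each case is to identify which of the reduction conditions (i)--(iv) of Section~\ref{redsubkap} is relevant and to match Bobiński's ``add a cohook / delete a hook'' operation with the corresponding r- or s-walk step. Concretely, the algorithm prepends to (or cancels from) $\omega$ a homotopy string that travels around the relevant $3$-cycles, and the resulting homotopy string is exactly $\sigma\cdot\omega$ (resp.\ the clockwise/counter-clockwise reduction $\omega'$ when $\omega$ satisfies one of (i)--(iv)); this is where the walk functions $cw\_r$, $ccw\_r$ and their mirror images were designed to do the bookkeeping, and the prefix constructions $cw\_r\_p$, $ccw\_r\_p$ handle the vertices of type $B,B',C,D',F,F'$ where the walk does not ``officially'' start. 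The degree shift $m'(\omega)$ is then read off as the difference between the number of direct and inverse homotopy letters added, which is precisely the recipe in Theorem~\ref{bobinskimain}(ii); I would track it alongside each case. Trivial homotopy strings $1_x^\varepsilon$ are treated separately (Tables~\ref{omegaplustrivial}, \ref{omegaminustrivial}): here the algorithm's output depends on $\varepsilon$ and on the two arrows incident to $x$, and the verification is the same in spirit but shorter.

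The main obstacle I anticipate is bookkeeping rather than conceptual: making sure that the finitely many boundary cases are all covered without overlap, in particular (a) the vertices of type $C$, $F$, $F'$ and $\widetilde C$, which exist only for special parameter values and sit where two $3$-cycles or a $3$-cycle and an $\alpha_i/\beta_j$-arrow meet, so the local picture — and hence $\omega^+$ — changes; (b) the interaction with the fixed choice of $S,T$, since the very definition of when $\sigma\cdot 1_x^\varepsilon$ is a legal composition, and hence the precise trivial-string output, hinges on these signs; and (c) degenerate situations where the reduction ``removes $\omega$ itself'' and $\omega^+$ becomes a trivial homotopy string or even $\emptyset$, forcing the fallback $m''(\omega)=m'(\omega_+)$. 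Because each of these is a local check against a finite table, I would relegate the longer case-by-case computations to the appendix and present in the main text only the structure of the argument plus one or two representative cases.
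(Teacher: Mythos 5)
Your plan for the $\omega^+$ half is essentially the paper's own proof: run Bobi\'nski's algorithm case by case and check that the walk functions $cw\_r$, $ccw\_r$, their mirror images and the prefix constructions reproduce each table entry, with the trivial strings $1_x^{\varepsilon}$ handled separately against the fixed $S,T$. One caution there: with the paper's conventions the case split must be organised by the \emph{last} letter $\alpha_l(\omega)$ and the vertex $t(\omega)$, not by $\alpha_1(\omega)$ and $s(\omega)$ as you write --- Tables \ref{omegaplus} and \ref{omegaminus} are indexed by $\alpha_l(\omega)$, and in the algorithm both the maximal path $\sigma$ (prepended so that $\sigma\cdot\omega$ is defined) and the antipath removal act at the $t(\omega)$-end of the string.

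The genuine gap is in your treatment of the $\omega_-$ tables. You claim that, once $\omega^+$ is verified, the statement for $\omega_-$ ``follows by applying the verified formula to $\omega^{-1}$ and inverting'' via Lemma \ref{omegaminuslemma}. But that lemma says $\omega_+=((\omega^{-1})^+)^{-1}$ and $\omega^-=((\omega^{-1})_-)^{-1}$: inversion exchanges $\omega^+$ with $\omega_+$ and $\omega^-$ with $\omega_-$. So knowing $\omega^+$ for all strings gives you $\omega_+$, not $\omega_-$; the two operations tabulated in the paper ($\omega^+$ and $\omega_-$) are exactly the ones \emph{not} related by string inversion, which is why both are tabulated. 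The paper closes this half by a different argument: since $\omega_-$ is characterised by $(\omega_-)^+=\omega$ (see Figure \ref{omeganeighbours}), for each entry of Tables \ref{omegaminus} and \ref{omegaminustrivial} one lists, using the already-verified $\omega^+$ tables, all $\widetilde{\omega}$ with $\widetilde{\omega}^+=\omega$ and checks that each such $\widetilde{\omega}$ equals the table's $\widehat{\omega}=\omega_-$ (e.g.\ for $\alpha_l(\omega)=\gamma_{2i}$ one verifies that undoing the added clockwise r-walk step, i.e.\ the clockwise r-reduction, recovers $\widetilde{\omega}$). You need either this ``invert the $+$-operation in the AR-sense'' argument or an independent run of (a dual form of) Bobi\'nski's algorithm for $\omega_-$; as written, your proposal leaves Tables \ref{omegaminus} and \ref{omegaminustrivial} unproved.
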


\begin{proof}
See Appendix \ref{bobinskiappendix}.
\end{proof}

\renewcommand{\arraystretch}{1.2}
\begin{longtable}[c]{|p{0.05\textwidth}l|p{0.1\textwidth}|p{0.24\textwidth}|m{0.2\textwidth}|}
\caption{$\omega^+$ for a non-trivial and non-empty homotopy string
  $\omega$\label{omegaplus}}\\
\hline
$\alpha_l(\omega)$ & & condition & $\omega^+$ & $m'(\omega)$ \\
\hline
\endfirsthead
\multicolumn{5}{l}
{\tablename\ \thetable\ -- \textit{Continued from previous page}} \\
\hline
$\alpha_l(\omega)$ & & condition & $\omega^+$ & $m'(\omega)$ \\
\hline
\endhead
\hline \multicolumn{5}{l}{\textit{Continued on next page}} \\
\endfoot
\endlastfoot
$\alpha_i$, & $1 \leq i \leq r_2$ & & $cw\_r(t\omega)\cdot \omega$ &
$-1$ \\ \hline \pagebreak[3]
$\alpha_i$, & $r_2 + 1 \leq i \leq r$ & $l(\omega) = 1$ & $\emptyset$ &
-- \\ \hline \pagebreak[3]
$\alpha_i$, & $r_2 + 1 \leq i \leq r$ & $l(\omega) > 1$ & ccw
r-reduction of $\omega$ & 0 \\ \hline \pagebreak[3]
$\beta_i$, & $1 \leq i \leq s_2$ & & $ccw\_s(t\omega)\cdot \omega$ &
$-1$ \\ \hline \pagebreak[3]
$\beta_i$, & $s_2 + 1 \leq i \leq s$ & $l(\omega) = 1$ & $\emptyset$ &
-- \\ \hline \pagebreak[3]
$\beta_i$, & $s_2 + 1 \leq i \leq s$ & $l(\omega) > 1$ & cw
s-reduction of $\omega$ & 0 \\ \hline \pagebreak[3]
$\alpha_i^{-1}$, & $1 \leq i \leq r$ & & $cw\_r(t\omega)\cdot \omega$ & 
\parbox[t]{\columnwidth}{$-1$ if $2 \leq i \leq r_2 +1$ \\
  $0$ if $r_2 + 2 \leq i \leq r$\\
  $\phi(r_1)$ if $i=1$
}
\\ \hline \pagebreak[3]
$\beta_i^{-1}$, & $1 \leq i \leq s$ & & $ccw\_s(t\omega)\cdot \omega$ & $0$ or $-1^*$
\\ \hline \pagebreak[3]
$\gamma_{2i}$, & $1 \leq i \leq r_2$ & & $cw\_r(t\omega)\cdot \omega$
& \parbox[t]{\columnwidth}{
  $0$ if $i=1$ and $r_1\! >\! 0$\! \\
  $-1$ otherwise \vspace{1pt}
  }\\ \hline \pagebreak[3]
$\delta_{2i}$, & $1 \leq i \leq s_2$ & & $ccw\_s(t\omega)\cdot \omega$
& $0$ or $-1^*$ \\ \hline \pagebreak[3]
$\gamma_{2i-1}$, & $1 \leq i \leq r_2$ & & $ccw\_s(t\omega)\cdot
\omega$ & \parbox[t]{\columnwidth}{$\phi(s_1)$ if $i>1$\\
  $\phi(r_1)$ if $i\! =\! 1$ and $s_2\! >\! 0$ \\
  $0$ otherwise \vspace{2pt}} \\ \hline \pagebreak[3]
$\delta_{2i-1}$, & $1 \leq i \leq s_2$ & & $cw\_r(t\omega)\cdot
\omega$ & $0$ or $-1^*$ \\ \hline \pagebreak[3]
$\gamma_{2i}^{-1}$, & $1 \leq i \leq r_2$ & & $ccw\_s(t\omega)\cdot
\omega$ & $\phi(s_1)$ \\ \hline \pagebreak[3]
$\delta_{2i}^{-1}$, & $1 \leq i \leq s_2$ & & $cw\_r(t\omega)\cdot
\omega$ & $0$ or $-1^*$ \\ \hline \pagebreak[3]
$\gamma_{2i-1}^{-1}$, & $1 \leq i \leq r_2$ & & ccw r-reduction of
$\omega$ & $-1$ \\ \hline \pagebreak[3]
$\delta_{2i-1}^{-1}$, & $1 \leq i \leq s_2$ & & cw s-reduction of
$\omega$ & $-1$ \\ \hline \pagebreak[3]
\multicolumn{5}{p{\linewidth}}{\parbox[t]{\linewidth}{\small ${}^{*}$ as in above row, but
  interchanging $r$ and $s$.\\ $\phi: \mathbb{Z}_{\geq 0} \rightarrow
  \{-1,0\}$ is defined by $\phi(a) = -1$ if $a=0$, otherwise $\phi(a) = 0$. }} \\ 
\end{longtable}

\begin{longtable}[c]{|ll|l|l|}
  \caption{$\omega_-$ for a non-trivial and non-empty homotopy string
    $\omega$\label{omegaminus}}\\
  \hline
  $\alpha_l(\omega)$ & & conditions & $\omega_-$ \\
  \hline
  \endfirsthead
  \multicolumn{4}{l}
  {\tablename\ \thetable\ -- \textit{Continued from previous page}} \\
  \hline
  $\alpha_l(\omega)$ & & conditions & $\omega_-$ \\
  \endhead
  \hline \multicolumn{4}{l}{\textit{Continued on next page}} \\
  \endfoot
  \endlastfoot
  $\alpha_i$, & $1 \leq i \leq r$ & & $ccw\_r(t\omega)\cdot \omega$
  \\
  \hline
  $\beta_i$, & $1 \leq i \leq s$ & & $cw\_s(t\omega)\cdot \omega$ \\
  \hline
  $\alpha_i^{-1}$, & $1 \leq i \leq r_2$ & & $ccw\_r(t\omega)\cdot
  \omega$ \\
  \hline
  $\alpha_i^{-1}$, & $r_2 + 1 \leq i \leq r$ & $l(\omega) = 1$ &
  $\emptyset$ \\
  \hline
  $\alpha_i^{-1}$, & $r_2 + 1 \leq i \leq r$ & $l(\omega) > 1$ & cw
  r-reduction of $\omega$ \\
  \hline
  $\beta_i^{-1}$, & $1 \leq i \leq s_2$ & & $cw\_s(t\omega)\cdot
  \omega$ \\
  \hline
  $\beta_i^{-1}$, & $s_2 + 1 \leq i \leq s$ & $l(\omega) = 1$ &
  $\emptyset$ \\
  \hline
  $\beta_i^{-1}$, & $s_2 + 1 \leq i \leq s$ & $l(\omega) > 1$ & ccw
  s-reduction of $\omega$ \\
  \hline
  $\gamma_{2i}$, & $1 \leq i \leq r_2$ & & cw r-reduction of $\omega$
  \\
  \hline
  $\gamma_{2i-1}$, & $1 \leq i \leq r_2$ & & $cw\_s(t\omega)\cdot
  \omega$ \\
  \hline
  $\delta_{2i}$, & $1 \leq i \leq s_2$ & & ccw s-reduction of $\omega$
  \\
  \hline
  $\delta_{2i-1}$, & $1 \leq i \leq s_2$ & & $ccw\_r(t\omega)\cdot
  \omega$ \\
  \hline
  $\gamma_{2i}^{-1}$, & $1 \leq i \leq r_2$ & & $cw\_s(t\omega)\cdot
  \omega$ \\
  \hline
  $\gamma_{2i-1}^{-1}$, & $1 \leq i \leq r_2$ & & $ccw\_r(t\omega)
  \cdot \omega$ \\
  \hline
  $\delta_{2i}^{-1}$, & $1 \leq i \leq s_2$ & & $ccw\_r(t\omega)\cdot
  \omega$ \\
  \hline
  $\delta_{2i-1}^{-1}$, & $1 \leq i \leq s_2$ & & $cw\_s(t\omega)
  \cdot \omega$ \\
  \hline
\end{longtable}

%
%
\renewcommand{\arraystretch}{1.05}
\begin{longtable}[c]{|>{\centering\arraybackslash}m{0.14\linewidth}|m{0.20\linewidth}|m{0.18\linewidth}|m{0.22\linewidth}|m{0.07\linewidth}|}
\caption{\small $\omega^+$ for a trivial homotopy string\label{omegaplustrivial}}\\
\hline
$x$ in & $\omega^+$ for $\omega=1_{x}$ & $m'(\omega)$ &$\omega^+$ for
$\omega = 1_{x}^{-1}$ & $m'(\omega)$ \\
\hline
\endfirsthead
\multicolumn{5}{l}%
{\tablename\ \thetable\ -- \textit{Continued from previous page}} \\
\hline
$x$ in & $\omega^+$ for $\omega=1_{x}$ & $m'(\omega)$ & $\omega^+$ for
$\omega = 1_{x}^{-1}$ & $m'(\omega)$ \\
\hline
\endhead
\hline \multicolumn{5}{l}{\textit{Continued on next page}} \\
\endfoot
\endlastfoot
$A$ & \multirow{2}{*}{$cw\_r(x)$} &$\phi(r_1)$ if $i\!=\!1$& \multirow{2}{*}{$\emptyset$} & \multirow{2}{*}{--}\\
(so $x=A_i$) & &$0$ otherwise & & \\
\hline\pagebreak[3]
$A'$ & \multirow{2}{*}{$ccw\_s(x)$} & $\phi(s_1)$ if $i\!=\!1$ &  \multirow{2}{*}{$\emptyset$} & \multirow{2}{*}{--}\\
(so $x=A'_i$) & &$0$ otherwise & & \\
\hline\pagebreak[3]
$B$ & $cw\_r(x)$ & $-1$ & $ccw\_s(x)$ & $\phi(s_1)$ \\
\hline\pagebreak[3]
$B'$ & $ccw\_s(x)$ &$-1$ & $cw\_r(x)$ & $\phi(r_1)$\\
\hline\pagebreak[3] \multirow{2}{*}{$C$} & $1_{D_{r_1-1}} $ if $r_1 >
0$ & $0$ & $1_{D'_{s_1-1}}$ if $s_1 > 0$
&$0$ \\
& $cw\_r(x) $ if $r_1 = 0$ & $-1$ & $ccw\_s(x)$ if $s_1 = 0$ & $-1$ \\
\hline\pagebreak[3]
$D$ & $1_{D_{i-1}} $ if $i > 0$ & $0$ & \multirow{2}{*}{$ccw\_s(x)$} &  \multirow{2}{*}{$\phi(s_1)$} \\
(so $x = D_i$) & $cw\_r(x)$ if $i = 0$  &$-1$ &  & \\
\hline\pagebreak[3]
\multirow{2}{*}{\parbox[c][1.3cm][c]{\linewidth}{
    \centering $D'$\\(so $x = D'_i$)}}
& $1_{D'_{i-1}} $ if $i > 0$ & $0$& \multirow{3}{*}{$cw\_r(x)$} &  \multirow{3}{*}{$\phi(r_1)$}\\
 & $ccw\_s(x)$ if $i = 0$ &
$\!\left\{\! \begin{array}{l} 0 \text{ if } s_2\!=\!0 \\
  -1 \text{ otherwise} \end{array} \right. $ & & \\
\hline\pagebreak[3]
$F$ & $ccw\_s(x)$ & $\phi(s_1)$ & $cw\_r(x)$&  $\phi(r_1)$\\
\hline
$F'$ & $cw\_s(x)$ & $-1$ & $ccw\_r(x)$& $-1$ \\
\hline \multicolumn{5}{p{0.85\linewidth}}{\small $\phi: \mathbb{Z}_{\geq 0} \rightarrow
  \{-1,0\}$ is defined by $\phi(a) = -1$ if $a=0$, otherwise $\phi(a) = 0$.} \\
\end{longtable}
\begin{longtable}[c]{|c|p{0.35\linewidth}|p{0.20\linewidth}|}
\caption{\small $\omega_-$ for a trivial homotopy string\label{omegaminustrivial}}\\
\hline
$x$ in & $\omega_-$ for $\omega = 1_{x}$ & $\omega_-$ for $\omega = 1_{x}^{-1}$ \\
\hline
\endfirsthead
\multicolumn{3}{l}%
{\tablename\ \thetable\ -- \textit{Continued from previous page}} \\
\hline
$x$ in & $\omega_-$ for $\omega = 1_{x}$ & $\omega_-$ for $\omega = 1_{x}^{-1}$ \\
\hline
\endhead
\hline \multicolumn{3}{l}{\textit{Continued on next page}} \\
\endfoot
\endlastfoot
$A$ & $\emptyset$ & $ccw\_r(x)$\\
\hline\pagebreak[3]
$A'$ & $\emptyset$ & $cw\_s(x)$\\
\hline\pagebreak[3]
$B$ & $ccw\_r\_p(x)$ & $cw\_s\_p(x)$\\
\hline\pagebreak[3]
$B'$ & $cw\_s\_p(x)$ & $ccw\_r\_p(x)$\\
\hline\pagebreak[3]
$C$ & $ccw\_r(x)$ & $cw\_s(x)$\\
\hline\pagebreak[3]
$D_i$ & $1_{D_{i+1}}$ if $i < r_1-1$ & \multirow{2}{*}{$cw\_s(x)$}\\
(so $x = D_i$) & $1_C$ if $i = r_1 - 1$ and $s_1 > 0$ &  \\
\hline\pagebreak[3]
$D'_i$ & $1_{D'_{i+1}}$ if $i < s_1-1$ & \multirow{2}{*}{$ccw\_r(x)$}\\ 
(so $x = D'_i$) & $1_C^{-1}$ if $i = s_1 - 1$ and $r_1 > 0$ &  \\
\hline\pagebreak[3]
$F$ & $cw\_r\_p(x)$ & $ccw\_s\_p(x)$\\
\hline
$F'$ & $ccw\_r\_p(x)$ & $cw\_s\_p(x)$\\
\hline
\end{longtable}
For the remaining part of this section, we will only consider
properties of homotopy strings, and not complexes in $\kbp$.  Note that
it will never happen that $\omega$ is equal to any of $\omega^+$,
$\omega_+$, $\omega^-$ and $\omega_-$.

We start by defining four diagonals for a given homotopy string
$\omega$. The \emph{upper right diagonal} of $\omega$ is the
sequence $(\omega, \omega^+, (\omega^+)^+, \ldots)$. Furthermore, we
define the \emph{lower right diagonal} of $\omega$ to be the
sequence $(\omega, \omega_+, (\omega_+)_+, \ldots)$. The upper and
lower left diagonals of $\omega$ are defined similarly.

Next, let a \emph{$Q$-walk} denote one of the following walks:
clockwise r-walk, clockwise s-walk, counter-clockwise r-walk,
counter-clockwise s-walk. Note that it follows from Proposition
\ref{tabellproposisjon}, that the condition $l(\omega^+)>l(\omega)$
implies that $\omega^+$ is of the form $\sigma\omega$ for a step
$\sigma$ in a $Q$-walk.
\begin{corollary}\label{diagonaler}
  Let $\omega$ be a homotopy string, let $\omega^*$ be either
  $\omega^+$ or $\omega_-$, and let $\mathcal{D}$ be the diagonal
  $(\omega, \omega^*, \ldots)$. If $\omega^*$ is of
  the form $\sigma \omega$, where $\sigma$ is a step in a
  $Q$-walk $W$, then the homotopy string in $\mathcal D$ succeeding
  $\omega^*$ is $\sigma' \omega^*$, where $\sigma'$ is the step
  succeeding $\sigma$ in $W$.
\end{corollary}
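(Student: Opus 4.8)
The plan is to deduce the statement from Proposition~\ref{tabellproposisjon}, i.e.\ from Tables~\ref{omegaplus}--\ref{omegaminustrivial}, together with the remark preceding it that $l(\omega^{+})>l(\omega)$ forces $\omega^{+}=\sigma\omega$ for a step $\sigma$ of a $Q$-walk. It suffices to treat the upper right diagonal $\mathcal D=(\omega,\omega^{+},(\omega^{+})^{+},\dots)$; for $\omega^{*}=\omega_{-}$ and the lower left diagonal the argument is entirely analogous, with Table~\ref{omegaminus} replacing Table~\ref{omegaplus} and the counter-clockwise $r$-walk and clockwise $s$-walk replacing the clockwise $r$-walk and counter-clockwise $s$-walk.

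First I would localise the hypothesis: scanning Tables~\ref{omegaplus} and~\ref{omegaplustrivial}, the entries for $\omega^{+}$ of the form $\sigma\omega$ with $\sigma$ a genuine $Q$-walk step are exactly $cw\_r(t\omega)\cdot\omega$ and $ccw\_s(t\omega)\cdot\omega$ (and, for trivial $\omega=1_{x}^{\varepsilon}$, the entries $cw\_r(x)$ and $ccw\_s(x)$); in these cases $\sigma$ is the first step of the clockwise $r$-walk, resp.\ the counter-clockwise $s$-walk, starting in $t\omega$, and---independently of which $Q$-walk $W$ through $\sigma$ one picks---the step of $W$ following $\sigma$ is $\sigma'=cw\_r(t\sigma)$, resp.\ $ccw\_s(t\sigma)$, by the recursion defining the walks. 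I would treat the clockwise $r$-walk case; the counter-clockwise $s$-walk case is its mirror image under $r\leftrightarrow s$, $\alpha\leftrightarrow\beta$, $\gamma\leftrightarrow\delta$ and reversal of orientation, which is exactly the symmetry encoded by the ``$*$'' footnotes of Table~\ref{omegaplus}.

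So suppose $\omega^{+}=\sigma\omega$ with $\sigma=cw\_r(t\omega)$ a clockwise $r$-walk step. Then $t(\omega^{+})=t(\sigma)$ again lies in $A\cup D$, so $cw\_r(t(\omega^{+}))=\sigma'$, and the last letter $\alpha_{l}(\omega^{+})$ equals the last letter $\alpha_{l}(\sigma)$ of $\sigma$. The essential point is then a finite check: reading off the bullets defining $cw\_r$, the last letter of any clockwise $r$-walk step is either a $\gamma_{2j}$ with $1\le j\le r_{2}$ or an $\alpha_{k}^{-1}$ with $1\le k\le r$, and for both of these values of $\alpha_{l}(\omega^{+})$ Table~\ref{omegaplus} gives $(\omega^{+})^{+}=cw\_r(t(\omega^{+}))\cdot\omega^{+}$---namely the row ``$\gamma_{2i}$, $1\le i\le r_{2}$'' and every row ``$\alpha_{i}^{-1}$, $1\le i\le r$'' have $\omega^{+}$-entry $cw\_r(t\omega)\cdot\omega$. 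Hence $(\omega^{+})^{+}=cw\_r(t(\omega^{+}))\cdot\omega^{+}=\sigma'\omega^{+}$, which is the claim; when $\omega$ is trivial this is the same computation applied to the non-trivial string $\omega^{+}=\sigma$.

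The only genuine content is this ``self-perpetuation'', and I expect that to be the main obstacle: one must verify, for each of the four walk types against the appropriate table, that the last letter of a walk step always lands in a row whose relevant entry prepends the succeeding walk step, never in a row that performs a reduction or yields $\emptyset$. Concretely this amounts to careful bookkeeping of indices and orientations---for instance confirming that the $\gamma$-letters occurring at the left end of a clockwise $r$-walk step are always even-indexed and the $\alpha$-letters always inverse, so that one never meets the ``ccw $r$-reduction'' or ``$\emptyset$'' rows---together with keeping the trivial-string cases of Tables~\ref{omegaplustrivial} and~\ref{omegaminustrivial} straight.
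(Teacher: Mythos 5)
Your proposal is correct and follows essentially the same route as the paper: the paper's proof likewise observes that when $\omega^{+}=\sigma\omega$ for a walk step $\sigma$ (it treats the clockwise r-walk case explicitly), the endpoint $t(\omega^{+})$ lies in $A\cup D$ with last letter $\gamma_{2i}$ or an inverse r-arrow, so Tables \ref{omegaplus}--\ref{omegaminustrivial} force $(\omega^{+})^{+}=cw\_r(t(\omega^{+}))\cdot\omega^{+}$, i.e.\ the next step of the walk. Your write-up merely spells out the finite check of last letters and the symmetric cases in more detail than the paper does.
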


\begin{proof}
  This follows from the definition of walks and Tables
  \ref{omegaplus}--\ref{omegaminustrivial}.  For example, assume that
  $\omega^{*} = \omega^+$ and that $\omega^+ = \sigma \omega$ where
  $\sigma$ is a step in a clockwise r-walk. Then $t(\omega^{+}) \in A
  \cup D$, such that $\alpha_l(t(\omega^+))$ is an arrow $\gamma_{2i}$
  if $t(\omega^+) \in A$, and an inverse r-arrow otherwise.  In all
  these cases, $(\omega^+)^+ = cw\_r(t(\omega^+))\cdot \omega^+$.
\end{proof}

\begin{proposition}\label{todiagonaler}
  (1) Let $\omega$ be a homotopy string. Assume that
  $\omega^{+}=\sigma\omega$ and $\omega_+=\omega\sigma'$, where
  $\sigma$ is a step in a $Q$-walk $W$ and $\sigma'^{-1}$ is a step in a
  $Q$-walk $W'$. If $\widetilde{\omega}$ is in the lower right
  diagonal of $\omega$, and  $\widehat{\omega}$ is in the upper
  right diagonal of $\omega$, then 
  \[\widetilde{\omega}^+ = \sigma\widetilde{\omega} ~~\text{ and }~~ \widehat{\omega}_+ =
  \widehat{\omega}\sigma' ~.\]
  (2) Let $\omega$ be a homotopy string. Assume that
  $\omega^{+}=\sigma\omega$ and $\omega^-=\omega\sigma'$, where
  $\sigma$ is a step in a $Q$-walk $W$ and $\sigma'^{-1}$ is a step in a
  $Q$-walk $W'$. If $\widetilde{\omega}$ is in the upper left
  diagonal of $\omega$, and $\widehat{\omega}$ is in the upper
  right diagonal of $\omega$, then
  \[\widetilde{\omega}^+ = \sigma\widetilde{\omega} ~~\text{ and }~~ \widehat{\omega}^- =
  \widehat{\omega}\sigma' ~.\]
  (3) Let $\omega$ be a homotopy string. Assume that
  $\omega_{-}=\sigma\omega$ and $\omega^-=\omega\sigma'$, where
  $\sigma$ is a step in a $Q$-walk $W$ and $\sigma'^{-1}$ is a step in a
  $Q$-walk $W'$. If $\widetilde{\omega}$ is in the upper left
  diagonal of $\omega$, and $\widehat{\omega}$ is in the lower
  left diagonal of $\omega$, then
  \[\widetilde{\omega}_- = \sigma\widetilde{\omega} ~~\text{ and }~~ \widehat{\omega}^- =
  \widehat{\omega}\sigma' ~.\] (4) Let $\omega$ be a homotopy
  string. Assume that $\omega_-=\sigma\omega$ and
  $\omega_+=\omega\sigma'$, where $\sigma$ is a step in a $Q$-walk $W$
  and $\sigma'^{-1}$ is a step in a $Q$-walk $W'$. If
  $\widetilde{\omega}$ is in the lower left diagonal of $\omega$, and
  $\widehat{\omega}$ is in the lower right diagonal of $\omega$, then
  \[\widetilde{\omega}_+ = \widetilde{\omega}\sigma' ~~\text{ and }~~ \widehat{\omega}_- =
  \sigma\widehat{\omega} ~.\]
\end{proposition}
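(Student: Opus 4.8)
The plan is to isolate two facts and combine them mechanically. The first is a \emph{localization} property of the tables defining $\omega^+$ and $\omega_-$: inspecting Tables~\ref{omegaplus}--\ref{omegaminustrivial} one checks that whenever $l(\omega^+)>l(\omega)$ the entry is $\omega^+=w\cdot\omega$ with $w$ a first step of a $Q$-walk starting at $t(\omega)$, and similarly $\omega_-=w\cdot\omega$ whenever $l(\omega_-)>l(\omega)$; in both cases $w$ is determined \emph{only} by $t(\omega)$ and the last letter $\alpha_l(\omega)$, and none of the rows in question carries a length condition. By Lemma~\ref{omegaminuslemma} this yields the dual statements: $\omega_+=\omega\cdot w^{-1}$ when $l(\omega_+)>l(\omega)$ and $\omega^-=\omega\cdot w^{-1}$ when $l(\omega^-)>l(\omega)$, with $w^{-1}$ in each case determined only by $s(\omega)$ and the first letter $\alpha_1(\omega)$. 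The second fact is Corollary~\ref{diagonaler}: iterated (and combined with Lemma~\ref{omegaminuslemma}) it shows, under the hypotheses of the statement, that if $\omega^+=\sigma\omega$ (resp.\ $\omega_-=\sigma\omega$) for a step $\sigma$ of a $Q$-walk $W$, then every entry of the upper right (resp.\ lower left) diagonal of $\omega$ is built from $\omega$ by successively prepending consecutive steps of $W$, so $s(\omega)$ and $\alpha_1(\omega)$ are constant along it; dually, if $\omega_+=\omega\sigma'$ (resp.\ $\omega^-=\omega\sigma'$), then every entry of the lower right (resp.\ upper left) diagonal is built from $\omega$ by successively appending on the $s$-side, so $t(\omega)$ and $\alpha_l(\omega)$ are constant along it.

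For part~(1) the two facts combine directly. By the second fact (using $\omega_+=\omega\sigma'$) every $\widetilde\omega$ in the lower right diagonal of $\omega$ satisfies $t(\widetilde\omega)=t(\omega)$ and $\alpha_l(\widetilde\omega)=\alpha_l(\omega)$; since $\omega^+=\sigma\omega$ with $l(\sigma)\geq 1$, the letter $\alpha_l(\omega)$ lies in one of the length-increasing rows of Table~\ref{omegaplus}, which has no length condition, so that row applies verbatim to $\widetilde\omega$ and gives $\widetilde\omega^+=w\cdot\widetilde\omega=\sigma\widetilde\omega$. For the upper right diagonal, the second fact (using $\omega^+=\sigma\omega$) writes every $\widehat\omega$ as $\sigma_k\cdots\sigma_1\omega$ with $\sigma_1=\sigma$ consecutive steps of $W$, so $s(\widehat\omega)=s(\omega)$ and $\alpha_1(\widehat\omega)=\alpha_1(\omega)$; hence $\widehat\omega^{-1}$ has the same initial vertex $s(\omega)$ and last letter $\alpha_1(\omega)^{-1}$ as $\omega^{-1}$, and since $\omega_+=\omega\sigma'$ is equivalent (by Lemma~\ref{omegaminuslemma}) to $(\omega^{-1})^+=(\sigma')^{-1}\omega^{-1}$ --- a length-increasing instance with no length condition on its row --- that same row gives $(\widehat\omega^{-1})^+=(\sigma')^{-1}\widehat\omega^{-1}$, i.e.\ $\widehat\omega_+=\widehat\omega\sigma'$.

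Parts~(2)--(4) follow by the identical two-step argument, with the operations and diagonals permuted accordingly; alternatively, replacing $\omega$ by $\omega^{-1}$ via Lemma~\ref{omegaminuslemma} turns part~(1) into itself with its two claims interchanged, part~(2) into part~(4), and part~(3) into itself with its claims interchanged, so it suffices to prove one claim of~(1), both claims of~(2), and one claim of~(3), each exactly as above. The only genuine work --- the step I expect to be the main obstacle --- is the first fact: one must go through Tables~\ref{omegaplus}--\ref{omegaminustrivial} case by case and verify that in every length-increasing case the string prepended (resp.\ appended) to $\omega$ is precisely a first step of a $Q$-walk depending only on the relevant endpoint and extreme letter of $\omega$, and, crucially for the induction along diagonals, that none of those rows depends on $l(\omega)$, so the row continues to apply to the longer homotopy strings occurring further along a diagonal. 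The minor case in which $\omega$ is itself a trivial homotopy string is handled the same way using Tables~\ref{omegaplustrivial} and~\ref{omegaminustrivial}.
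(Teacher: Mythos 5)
Your argument for non-trivial $\omega$ is essentially the paper's own proof: Corollary \ref{diagonaler} (and its inverse-side dual via Lemma \ref{omegaminuslemma}) shows the extreme letter and endpoint on the relevant side are constant along the diagonal, and the length-increasing rows of Tables \ref{omegaplus}--\ref{omegaminus} depend only on that data and carry no length condition, so the same row applies to every entry of the diagonal; your reduction of (2)--(4) to (1) and (3) by inverting $\omega$ is also fine and matches the paper's ``the proofs are similar.''

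The one thin spot is your last sentence. When $\omega=1_x^{\varepsilon}$ is trivial, the case is \emph{not} ``handled the same way'': your mechanism rests on $\alpha_l(\widetilde\omega)=\alpha_l(\omega)$ (resp.\ $\alpha_1(\widehat\omega)=\alpha_1(\omega)$) along the diagonal, and this has no meaning at the first step, since $\omega$ has no letters and the next diagonal entries ($\sigma'$, resp.\ $\sigma$) get their extreme letters from the appended walk steps. Consequently, knowing what Table \ref{omegaplustrivial} does to $1_x^{\varepsilon}$ tells you nothing, by your two stated facts alone, about what Table \ref{omegaplus} does to $\sigma'$: you must separately verify the compatibility $\sigma_+=\sigma\sigma'=(\sigma')^{+}$ (and its analogues in cases (2)--(4)). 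This is exactly what the paper does: it observes from Table \ref{omegaplustrivial} that the situation only occurs for $x$ in $B\cup B'\cup D_0\cup D_0'\cup F\cup F'$, checks the commuting square from Tables \ref{omegaplus} and \ref{omegaplustrivial} for these finitely many cases, and then applies the non-trivial case to the upper right diagonal of $\sigma$ and the lower right diagonal of $\sigma'$. This extra verification is short, but it is a genuinely different step from the one you describe, so you should spell it out rather than assert the trivial case goes through ``the same way.''
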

\begin{proof}
  We prove case (1).  The proofs for cases (2)--(4) are similar. If
  $\omega \neq \emptyset$ is non-trivial, then by Corollary
  \ref{diagonaler}, we have that $\alpha_1(\widehat{\omega}) =
  \alpha_1(\omega)$ and that
  $\alpha_{l(\widetilde{\omega})}(\widetilde{\omega}) =
  \alpha_{l(\omega)}(\omega)$; and $\widehat{\omega}_+$ is determined
  by $\alpha_1(\widehat{\omega})$, and $\widetilde{\omega}^+$ is
  determined by $\alpha_{l(\widetilde{\omega})}$.

  Assume now that $\omega$ is trivial, that is, $\omega =
  1_x^{\varepsilon}$ for some vertex $x$ and some $\varepsilon \in
  \{-1,1\}$. Then, by Table \ref{omegaplustrivial}, it follows that
  $x$ is in $B \cup B' \cup D_0 \cup D_0' \cup F \cup F'$, since these are the
  only vertex types where both $1_x^+$ and $(1_x^{-1})^+$ are of
  length greater than $\omega$. It is easy to verify by Tables
  \ref{omegaplus} and \ref{omegaplustrivial} that we are in this
  situation:
  \[
  \begin{tikzpicture}
    \node (1) at (-1,0) [tvertex] {$1_x^{\varepsilon}$};
    \node (2) at (0,1) [tvertex] {$\sigma$};
    \node (3) at (0,-1) [tvertex] {$\sigma'$};
    \node (4) at (1,0) [tvertex] {$\sigma \sigma'$};
    \draw [->] (1)--(2);
    \draw [->] (1)--(3);
    \draw [->] (2)--(4);
    \draw [->] (3)--(4);
  \end{tikzpicture}
  \]
  We can now consider the upper right diagonal of  $\sigma$ and the
  lower right diagonal of $\sigma'$, but then we are in the
  non-trivial case.
\end{proof}

\section{Classification of the AR-components}
In this section, we give a complete classification of all the
AR-components in $\kbp$ where $\Lambda \cong kQ/I$ is a fixed
cluster-tilted algebra of type $\ant$ with $Q$ in $\mathcal
Q_n^{\star}$. 

We start by defining admissible reduction for homotopy strings and we
show that there are three types of homotopy strings that can not be
admissibly reduced. Moreover, any other homotopy string can be
admissibly reduced to a homotopy string of one of these three types.
In Section \ref{sec:charcomp} we show that one of the classes of
homotopy strings that can not be admissibly reduced, gives rise to the
$\zainf$-components and tubes containing string complexes.  In Section
\ref{sec:zainfinf} we parametrize the $\zainfinf$-components arising
from the second class of homotopy strings that can not be admissibly
reduced, and we describe the $\zainfinf$-components arising from the
third class of homotopy strings that can not be admissibly reduced.
In Section \ref{sec:band} we consider the AR-components containing
band complexes.  Finally, Section \ref{summary} provides a summary of
the main results.

In the remainder of this section, all homotopy strings and homotopy
bands are associated with the fixed algebra $\Lambda$.

\begin{definition}\label{admissible}
Let $\omega$ be a homotopy string which is neither an r-arrow, nor an
s-arrow, nor the inverse of such an arrow. If $\omega'$ is either the
clockwise r-reduction of $\omega$, or the counter-clockwise
r-reduction of $\omega$, or the clockwise s-reduction of $\omega$, or
the counter-clockwise s-reduction of $\omega$, then we call $\omega'$
a \emph{left admissible reduction} of $\omega$.

We define a \emph{right admissible reduction} of $\omega$ to be $\omega''$,
where $(\omega'')^{-1}$ is a left admissible reduction of
$\omega^{-1}$.
\end{definition}

If there exists a left or right admissible reduction of a homotopy
string $\omega$, then we say that $\omega$ can be admissibly reduced,
and the operation performed is an admissible reduction.

\begin{lemma}\label{admredlemma}
  Let $\omega$ be a homotopy string. If $\omega'$ is a left admissible
  reduction of $\omega$, then either $\omega'=\omega^+$ or
  $\omega'=\omega_-$. Moreover,
\begin{itemize}
\item if $\omega'$ is a clockwise r-reduction or a counter-clockwise
  s-reduction of $\omega$, then $\omega'=\omega_-$, and
\item if $\omega'$ is a clockwise s-reduction or a counter-clockwise
  r-reduction of $\omega$, then $\omega'=\omega^+$.
\end{itemize}
Similarly, if $\omega'$ is a right admissible reduction of $\omega$,
then $\omega'$ is either ${\omega_+}$ or $\omega^-$.
\end{lemma}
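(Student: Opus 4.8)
The plan is to verify the statement by a direct case analysis against Tables~\ref{omegaplus} and~\ref{omegaminus}, exploiting two facts. First, by Definition~\ref{admissible} a left admissible reduction $\omega'$ of $\omega$ is one of the four reductions of Definition~\ref{reduction1}; in particular $\omega$ must be non-trivial and non-empty and must satisfy exactly one of the properties~(i)--(iv) listed at the start of Section~\ref{redsubkap}. Second, $\omega$ is neither an r-arrow, an s-arrow, nor the inverse of such an arrow. I would carry out the argument for the clockwise r-reduction in full detail and then observe that the remaining three cases follow by the same reasoning, using that the counter-clockwise s-walk and the clockwise s-walk are by definition the mirror images of the clockwise r-walk and the counter-clockwise r-walk.

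Assume first that $\omega'$ is the clockwise r-reduction of $\omega$, so that $\omega$ satisfies property~(i): $t(\omega)\in A\cup D$ and $\alpha_l(\omega)$ is the last letter of the $r$th step $w_r$ of the clockwise r-walk starting in $t(\omega)$. The key first step is to identify $\alpha_l(\omega)$. Since $w_r=cw\_r(v)$ for the vertex $v=s(w_r)\in A\cup D$, I would read off the last letter of $cw\_r(v)$ from each of the four bullet points defining $cw\_r$ on $A\cup D$; in every case it is $\gamma_{2j}$ for some $1\leq j\leq r_2$ or $\alpha_j^{-1}$ for some $r_2+1\leq j\leq r$. Now I would consult Table~\ref{omegaminus} (which records $\omega_-$, by Proposition~\ref{tabellproposisjon}): if $\alpha_l(\omega)=\gamma_{2j}$, the entry is the clockwise r-reduction of $\omega$, i.e.\ $\omega_-=\omega'$; if $\alpha_l(\omega)=\alpha_j^{-1}$ with $r_2+1\leq j\leq r$, then $\omega$ is not an inverse r-arrow, so $l(\omega)>1$, the $\emptyset$-row of the table does not apply, and again $\omega_-=\omega'$. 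This gives $\omega'=\omega_-$.

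The counter-clockwise s-reduction is handled by the mirror-image argument: property~(ii) now holds, $\alpha_l(\omega)$ is (after the substitution $\gamma\leftrightarrow\delta$, $\alpha\leftrightarrow\beta$, $r\leftrightarrow s$) of the form $\delta_{2j}$ with $1\leq j\leq s_2$ or $\beta_j^{-1}$ with $s_2+1\leq j\leq s$, and Table~\ref{omegaminus} once more gives $\omega'=\omega_-$, the $\emptyset$-row being excluded because $\omega$ is not an inverse s-arrow. For the counter-clockwise r-reduction one finds analogously from $ccw\_r$ that $\alpha_l(\omega)\in\{\gamma_{2j-1}^{-1}:1\leq j\leq r_2\}\cup\{\alpha_j:r_2+1\leq j\leq r\}$, and for the clockwise s-reduction that $\alpha_l(\omega)\in\{\delta_{2j-1}^{-1}:1\leq j\leq s_2\}\cup\{\beta_j:s_2+1\leq j\leq s\}$; in both cases Table~\ref{omegaplus} yields $\omega'=\omega^+$, and the $\emptyset$-rows of that table are precisely the excluded cases of $\omega$ being an r-arrow or an s-arrow. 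This proves both bullet points and, in particular, that every left admissible reduction equals $\omega^+$ or $\omega_-$. For the last assertion, if $\omega''$ is a right admissible reduction of $\omega$ then $(\omega'')^{-1}$ is a left admissible reduction of $\omega^{-1}$, so by the above $(\omega'')^{-1}\in\{(\omega^{-1})^+,(\omega^{-1})_-\}$; taking inverses and invoking Lemma~\ref{omegaminuslemma}, which gives $\omega_+=((\omega^{-1})^+)^{-1}$ and $\omega^-=((\omega^{-1})_-)^{-1}$, we conclude $\omega''\in\{\omega_+,\omega^-\}$.

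The one genuine obstacle is bookkeeping: correctly enumerating the possible final letters of a step of each of the four walks and matching the resulting index ranges against the correct rows of Tables~\ref{omegaplus} and~\ref{omegaminus} — in particular getting right which reduction is associated to $\omega^+$ and which to $\omega_-$, and verifying that every $\emptyset$-row that could in principle be reached corresponds to one of the cases excluded in Definition~\ref{admissible}. Beyond this there is no conceptual difficulty.
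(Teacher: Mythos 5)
Your proposal is correct and follows essentially the same route as the paper: treat the clockwise r-reduction case in detail by noting that condition (i) forces $\alpha_l(\omega)$ to be some $\gamma_{2j}$ or an inverse r-arrow, read off $\omega_-$ from Table \ref{omegaminus}, discard the $l(\omega)=1$ (empty-string) rows because Definition \ref{admissible} excludes r- and s-arrows and their inverses, and handle the remaining three reductions by the symmetric argument; the paper's proof is exactly this, leaving the right-reduction statement to the definition together with Lemma \ref{omegaminuslemma}, which you make explicit.
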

\begin{proof}
  Let $\omega$ be a homotopy string and assume it can be left
  admissibly reduced and that $\omega'$ is the left admissible
  reduction of $\omega$.  The left admissible reduction of $\omega$ is
  either a clockwise r-reduction of $\omega$, a counter-clockwise
  r-reduction of $\omega$, a clockwise s-reduction of $\omega$ or a
  counter-clockwise s-reduction of $\omega$.

  If the reduction is a clockwise r-reduction, then $\omega$ satisfies
  condition (\ref{condition1}) in chapter \ref{redsubkap}. In
  particular $\alpha_l(\omega)$ is either an arrow $\gamma_{2i}$ for
  some $1 \leq i \leq r_2$, or an inverse r-arrow. Thus we need to
  check that in all these cases $\omega'=\omega_-$. The entries in
  Table \ref{omegaminus} clearly show that this is so, except for the
  case when $l(\omega)=1$ and $\omega$ is the inverse of an
  r-arrow. The clockwise r-reduction of the exceptions are defined,
  however the reductions are not admissible reductions, and therefore
  are not cases we need to consider, as we have assumed that $\omega$ can
  be admissibly reduced.
  
  The three other cases follow by similar arguments.
\end{proof}

\begin{corollary}\label{admarcorollar}
  Let $\omega$ be a homotopy string. Then $\omega'$ is a left
  admissible reduction of $\omega$ if and only if $\omega'\neq
  \emptyset$ and $\omega'$ is one of $\{\omega^+, \omega_-\}$ such
  that $l(\omega') < l(\omega)$.
\end{corollary}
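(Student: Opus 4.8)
The plan is to establish the equivalence by proving the two implications separately; both follow from Lemma~\ref{admredlemma}, a short inspection of Tables~\ref{omegaplus}--\ref{omegaminus}, and the elementary fact that concatenation of homotopy strings adds lengths. For the ``only if'' direction, suppose $\omega'$ is a left admissible reduction of $\omega$. By Definition~\ref{admissible}, $\omega'$ is one of the four reductions of $\omega$, so by the definition of reduction in Section~\ref{redsubkap} we have $\omega=\sigma\omega'$ for a \emph{non-trivial} homotopy string $\sigma$; since $l(\sigma)\geq 1$ this gives $l(\omega')<l(\omega)$. Still by that definition, $\omega'$ is either a non-trivial non-empty right factor of $\omega$ or the trivial homotopy string $1_{s\omega}^{\varepsilon}$, so $\omega'\neq\emptyset$. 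Finally Lemma~\ref{admredlemma} yields $\omega'\in\{\omega^+,\omega_-\}$, which completes this direction.

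For the ``if'' direction, assume $\omega'\neq\emptyset$, $\omega'\in\{\omega^+,\omega_-\}$ and $l(\omega')<l(\omega)$. From $0\leq l(\omega')<l(\omega)$ we get $l(\omega)\geq 1$, so $\omega$ is non-trivial and non-empty and hence $\omega^+$ and $\omega_-$ are described by Tables~\ref{omegaplus} and~\ref{omegaminus}. The observation driving the argument is that every entry of these two tables has one of exactly three shapes: (a) $\mu\cdot\omega$ for a step or step-prefix $\mu$ of a $Q$-walk; (b) $\emptyset$; or (c) one of the four reductions of $\omega$. Shape (a) is excluded, since $l(\mu)\geq 1$ forces $l(\mu\cdot\omega)>l(\omega)$, contradicting $l(\omega')<l(\omega)$; shape (b) is excluded since $\omega'\neq\emptyset$. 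Hence $\omega'$ has shape (c), i.e.\ $\omega'$ is one of the four reductions of $\omega$. It then only remains to verify the standing hypothesis of Definition~\ref{admissible}, that $\omega$ is not an r-arrow, an s-arrow, or the inverse of such: in each shape-(c) row either the constraint $l(\omega)>1$ is imposed (so $\omega$ is not a single letter) or the last letter $\alpha_l(\omega)$ is an arrow, or inverse arrow, lying in a $3$-cycle, which by definition is neither an r-arrow nor an s-arrow. In both cases $\omega$ is of the required form, so $\omega'$ is a left admissible reduction of $\omega$.

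The step I expect to be the real work is the case analysis in the second paragraph: confirming that Tables~\ref{omegaplus}--\ref{omegaminus} genuinely split into the three shapes (a)--(c); that the ``$\mu\cdot\omega$'' entries are honest concatenations with no cancellation at the seam, so that the length estimate $l(\mu\cdot\omega)=l(\mu)+l(\omega)$ is valid; and that each reduction row in fact carries a hypothesis ruling out $\omega$ being an isolated r- or s-arrow. Everything else is a direct appeal to Definition~\ref{admissible}, the definition of reduction, and Lemma~\ref{admredlemma}.
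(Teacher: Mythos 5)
Your proposal is correct and follows the paper's own route: the ``only if'' direction is Lemma \ref{admredlemma} together with the length/non-emptiness observations from Definitions \ref{reduction1} and \ref{admissible}, and the ``if'' direction is the same inspection of Tables \ref{omegaplus} and \ref{omegaminus} that the paper invokes (your three-shape classification and the check that the reduction rows exclude single r- or s-arrows simply spell out what the paper leaves as ``clear'').
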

\begin{proof}
Let $\omega$ be a homotopy string.

Assume that $\omega$ has an admissible reduction $\omega'$. By the
definition of admissible reduction it is clear that
$l(\omega')<l(\omega)$ and $\omega' \neq \emptyset$, and by Lemma
\ref{admredlemma} either $\omega'=\omega^+$ or $\omega'=\omega_-$.

Assume that $\omega' \neq\emptyset$ is either $\omega^+$ or $\omega_-$
and that $l(\omega')<l(\omega)$. By Tables \ref{omegaplus} and
\ref{omegaminus} it is clear that in any of these cases $\omega^+$ and
$\omega_-$ is an admissible reduction.
\end{proof}
It is clear from the definition of a right admissible reduction, that
there is a similar result as Corollary \ref{admarcorollar} for right
admissible reductions.  We now give the definition of central homotopy
strings, which we will show form one of the classes of homotopy strings
that can not be admissibly reduced.

\begin{definition}
  A \emph{central homotopy string} is either a trivial homotopy string
  corresponding to a vertex of type $B, B', F$ or $F'$, or a homotopy
  string $\omega$ where
  \begin{itemize}
  \item $\alpha_1(\omega)\in\left\{\alpha_i\,, \alpha_i^{-1}\,, \beta_j\,,
      \beta_j^{-1}\,, \gamma_{2i}\,, \gamma_{2i-1}^{-1}\,, \delta_{2j}\,,
      \delta_{2j-1}^{-1}~|\,1\leq i\leq r_2, 1\leq j\leq s_2 \right\}$
  \item $\alpha_l(\omega)\in\left\{\alpha_i\,, \alpha_i^{-1}\,, \beta_j\,,
      \beta_j^{-1}\,, \gamma_{2i-1}\,, \gamma_{2i}^{-1}\,, \delta_{2j-1}\,,
      \delta_{2j}^{-1}~|\,1\leq i\leq r_2, 1\leq j\leq s_2 \right\}.$
  \end{itemize}
\end{definition}

\begin{lemma}\label{centralnoreduction}
  If $\omega$ is a central homotopy string, then $\omega$ can not be
  admissibly reduced.
\end{lemma}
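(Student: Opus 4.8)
The plan is to translate ``cannot be admissibly reduced'' into a statement about lengths via Corollary~\ref{admarcorollar}, read off the lengths of $\omega^+$ and $\omega_-$ from Tables~\ref{omegaplus} and~\ref{omegaminus}, and then deduce the assertion for right admissible reductions from the one for left admissible reductions by means of the involution $\omega\mapsto\omega^{-1}$.

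First I would dispose of the trivial case. If $\omega=1_x^{\varepsilon}$ with $x$ of type $B,B',F$ or $F'$, then $l(\omega)=0$, so by Corollary~\ref{admarcorollar} there is no $\omega'$ with $\omega'\neq\emptyset$ and $l(\omega')<l(\omega)$; hence $\omega$ has no left admissible reduction. Since $\omega^{-1}=1_x^{-\varepsilon}$ is again a trivial homotopy string of the same vertex type, the same argument applied to $\omega^{-1}$ shows that $\omega$ has no right admissible reduction either. (The argument in fact applies to every trivial homotopy string.)

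Now let $\omega$ be a non-trivial central homotopy string, so that $\alpha_l(\omega)$ lies in the set
\[
S=\{\alpha_i,\ \alpha_i^{-1},\ \beta_j,\ \beta_j^{-1},\ \gamma_{2i-1},\ \gamma_{2i}^{-1},\ \delta_{2j-1},\ \delta_{2j}^{-1}\ \mid\ 1\le i\le r_2,\ 1\le j\le s_2\}.
\]
The key step is to show $l(\omega^+)>l(\omega)$ and $l(\omega_-)>l(\omega)$; by Corollary~\ref{admarcorollar} this rules out any left admissible reduction. To do this I would run through the eight kinds of letter in $S$ and check, against Tables~\ref{omegaplus} and~\ref{omegaminus}, that in every case the entries for $\omega^+$ and for $\omega_-$ have the form $\sigma\cdot\omega$ where $\sigma$ is one of $cw\_r(t\omega)$, $ccw\_r(t\omega)$, $cw\_s(t\omega)$ or $ccw\_s(t\omega)$ --- that is, a single step or prefix of a $Q$-walk --- and in particular are never $\emptyset$ and never a reduction of $\omega$. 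The reason this works is that the only rows of the two tables whose output is $\emptyset$ or a genuine reduction require $\alpha_l(\omega)$ to be an r-arrow or s-arrow, the inverse of such, a $\gamma_{2i}$, a $\delta_{2i}$, a $\gamma_{2i-1}^{-1}$ or a $\delta_{2i-1}^{-1}$, and none of these belongs to $S$. Since a step (or prefix) of a $Q$-walk always has positive length, $l(\sigma\cdot\omega)=l(\sigma)+l(\omega)>l(\omega)$, which is the desired inequality.

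It remains to exclude a right admissible reduction of $\omega$. By the definition of right admissible reduction this amounts to excluding a left admissible reduction of $\omega^{-1}$, so it suffices to observe that $\omega^{-1}$ is again a central homotopy string: its first and last letters are $(\alpha_l(\omega))^{-1}$ and $(\alpha_1(\omega))^{-1}$, and the two letter-sets occurring in the definition of a central homotopy string are exchanged by the involution $\alpha\mapsto\alpha^{-1}$ (which fixes the vertex types $B,B',F,F'$); hence $\omega$ central implies $\omega^{-1}$ central. Applying the previous two paragraphs to $\omega^{-1}$ then finishes the proof. The only genuine work is the table check in the third paragraph; the thing that needs care there is precisely that the eight letter-types in the definition of a central homotopy string are exactly those avoided by the rows producing $\emptyset$ or a reduction, together with the small observation that steps of $Q$-walks are never trivial --- and it is the symmetry remark ($S$ and the corresponding first-letter set are swapped by inversion) that makes the right-hand case come for free.
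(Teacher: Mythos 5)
Your proof is correct and follows essentially the same route as the paper, whose proof is just the remark that the claim follows from Tables \ref{omegaplus} and \ref{omegaminus}: you carry out exactly that table check, made precise via Corollary \ref{admarcorollar} and the observation that the inverse of a central homotopy string is again central (handling the right admissible reductions and the trivial case explicitly, which the paper leaves implicit).
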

\begin{proof}
  This follows from Tables \ref{omegaplus} and \ref{omegaminus}.
\end{proof}

We have now seen that there are three classes of homotopy strings
which can not be admissibly reduced: The central homotopy strings (by
the above lemma), the non-central trivial homotopy strings (because no
trivial homotopy string can be reduced, by Definition
\ref{reduction1}), and the r- and s-arrows and their inverses (by
Definition \ref{admissible}).  The following lemma shows that these
classes of homotopy strings are the only ones which can not be
admissibly reduced.

\begin{lemma}\label{reduksjonstyper}
  Let $\omega \neq \emptyset$ be a homotopy string.  By a series of
  left or right admissible reductions, $\omega$ can be reduced to a
  homotopy string which is of one of the following types:
  \begin{itemize}
  \item[(i)] an r- or s-arrow or an inverse of such an arrow or a
    trivial homotopy string corresponding to a vertex of type $A$ or
    $A'$, or
    \item[(ii)] a central homotopy string, or
    \item[(iii)] a trivial homotopy string corresponding to a vertex
      of type $C$, $D$ or $D'$.
  \end{itemize}
\end{lemma}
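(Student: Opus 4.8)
The plan is to combine a termination argument with a classification of the homotopy strings that admit no admissible reduction. For termination, observe that by Corollary~\ref{admarcorollar} (and its evident analogue for right admissible reductions) every admissible reduction replaces a homotopy string by a \emph{strictly shorter}, non-empty one. Hence any sequence of admissible reductions issuing from $\omega$ is finite; choosing a maximal such sequence, it ends at a non-empty homotopy string $\omega^{\ast}$ that cannot be admissibly reduced. It therefore suffices to prove the following: a non-empty homotopy string which cannot be admissibly reduced is already of type (i), (ii) or (iii).

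To prove this I would argue by contradiction, assuming $\omega$ is non-empty and of none of the three types. First dispose of the trivial case: if $\omega = 1_x^{\varepsilon}$, then $x$ has one of the types $A, A', B, B', C, D, D', F, F'$ (type $\widetilde{C}$ does not occur for $Q\in\mathcal{Q}_n^{\star}$ outside the hereditary Case~1 of Table~\ref{parameterverdier}, which is excluded). If $x$ is of type $A$ or $A'$ then $\omega$ is of type (i); if $x$ is of type $C$, $D$ or $D'$ then $\omega$ is of type (iii); and if $x$ is of type $B$, $B'$, $F$ or $F'$ then $\omega$ is a central homotopy string, hence of type (ii). Each case contradicts the assumption, so $\omega$ is non-trivial. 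Being of neither type (i) nor type (ii), $\omega$ is then neither an r- or s-arrow nor the inverse of one, and it is not central; since $\omega$ is non-trivial, failure of centrality means that $\alpha_1(\omega)$ violates the first-letter condition or $\alpha_l(\omega)$ violates the last-letter condition in the definition of a central homotopy string.

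The heart of the proof is the following step: \emph{if $\omega$ is non-trivial, not an r- or s-arrow or the inverse of one, and $\alpha_l(\omega)$ is not one of the admissible last letters $\alpha_i,\alpha_i^{-1},\beta_j,\beta_j^{-1},\gamma_{2i-1},\gamma_{2i}^{-1},\delta_{2j-1},\delta_{2j}^{-1}$ of a central homotopy string, then one of $\omega^+,\omega_-$ is non-empty and strictly shorter than $\omega$, hence by Corollary~\ref{admarcorollar} a left admissible reduction of $\omega$.} This is a finite verification from Tables~\ref{omegaplus} and~\ref{omegaminus}, organised by the value of $\alpha_l(\omega)$: the only length-decreasing entries in those tables are the ones labelled ``\dots reduction of $\omega$'', and for each last letter outside the admissible list one checks directly that at least one of the two tables yields such an entry (for $\gamma_{2i}$ and $\delta_{2j}$ use the $\omega_-$ row; for $\gamma_{2i-1}^{-1}$ and $\delta_{2j-1}^{-1}$ the $\omega^+$ row; for r- or s-arrows and their inverses, the $\omega^+$ or $\omega_-$ row when $l(\omega)>1$); the $\emptyset$ entries occur only in exactly the excluded cases where $\omega$ itself is an r- or s-arrow or its inverse, and the trivial-string Tables~\ref{omegaplustrivial}--\ref{omegaminustrivial} are irrelevant since $\omega$ is non-trivial. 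With this step established: if $\alpha_l(\omega)$ violates the last-letter condition we are done at once. If instead $\alpha_1(\omega)$ violates the first-letter condition, note that inverting letters carries the list of admissible first letters bijectively onto the list of admissible last letters, so $\alpha_l(\omega^{-1}) = \alpha_1(\omega)^{-1}$ violates the last-letter condition, and $\omega^{-1}$ is again non-trivial and not an r-/s-arrow or inverse; the step gives a left admissible reduction $\nu$ of $\omega^{-1}$, and then $\nu^{-1}$ is a right admissible reduction of $\omega$. Either way $\omega$ can be admissibly reduced, contradicting the assumption.

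The main obstacle is precisely this middle step: conceptually it only says ``a non-central last letter always permits a reduction on the left'', but carrying it out requires a careful bookkeeping pass through the large Tables~\ref{omegaplus}--\ref{omegaminus}, matching every possible value of $\alpha_l(\omega)$ with the appropriate length-decreasing row among $\omega^+$ and $\omega_-$. The remaining ingredients --- termination, the passage from the ``bad first letter'' case to the ``bad last letter'' case by inversion, and the identification of the trivial homotopy strings via Table~\ref{parameterverdier} --- are routine.
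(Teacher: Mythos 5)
Your proposal is correct and follows essentially the same route as the paper's proof: one shows that any non-empty homotopy string not of types (i)--(iii) has a bad first or last letter (passing to $\omega^{-1}$ in the former case), reads off a length-decreasing entry of $\omega^{+}$ or $\omega_{-}$ from Tables \ref{omegaplus} and \ref{omegaminus}, and iterates, with termination by strict length decrease. Your write-up merely makes explicit what the paper leaves implicit (the termination via Corollary \ref{admarcorollar} and the check that all trivial homotopy strings already fall under (i)--(iii)), so there is nothing to object to.
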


\begin{proof}
  Let $\omega \neq \emptyset$ be a homotopy string, and assume that
  $\omega$ is neither of type $(i)$, nor $(ii)$, nor $(iii)$.  Then
  $l(\omega) > 0$. Denote by $X$ the complement of
  \[\left\{\alpha_i\,, \alpha_i^{-1}\,, \beta_j\,, \beta_j^{-1}\,,
    \gamma_{2i-1}\,, \gamma_{2i}^{-1}\,, \delta_{2j-1}\,,
    \delta_{2j}^{-1}~|\,1\leq i\leq r_2, 1\leq j\leq s_2 \right\}\] in
  $Q'_1$. Let $\widehat{\omega} = \omega$ if $\alpha_l(\omega) \in
  X$, otherwise let $\widehat{\omega} = \omega^{-1}$.  It is clear
  that $\alpha_l(\widehat{\omega}) \in X$, since $\omega$ is not a
  central homotopy string.  In any case, either $\widehat{\omega}^+$
  or $\widehat{\omega}_-$ will be an admissible reduction of
  $\widehat{\omega}$.

  For instance, assume that $\alpha_l(\widehat{\omega}) = \gamma_{2i}$
  for some $1\leq i \leq r_2$. Then by Table \ref{omegaminus},
  $\widehat{\omega}_-$ is an admissible reduction of
  $\widehat{\omega}$.

  Next, let $\widehat{\widehat{\omega}}$ be the admissible reduction
  of $\widehat{\omega}$.  Repeat the above step for the homotopy
  string $\widehat{\widehat{\omega}}$.
\end{proof}

\subsection{Characteristic components containing string complexes}
\label{sec:charcomp}
By a characteristic component, we mean an AR-component containing
AR-triangles with only one middle term.  In this section, we will
consider characteristic components containing string complexes.
These components are dependent on the parameters of the quiver of the
cluster-tilted algebra.

A similar result as Proposition \ref{randproposisjon} holds by
exchanging the parameters $s_1$ and $s_2$ with the parameters $r_1$
and $r_2$.

\begin{proposition}\label{randproposisjon}
  If $s_2 = 0$ then, for each $i \in \mathbb Z$, there is a characteristic
  component with the following edge:

\[
\begin{tikzpicture}
  \node (1) at (0,0) [kvertex] {$\beta_s^{-1}[i]$};
  \node (2) at (2,0) [kvertex] {$\beta_{s-1}^{-1}[i]$};
  \node (3) at (3.25,0) [kvertex] {$\cdots$};
  \node (4) at (4.5,0) [kvertex] {$\beta_1^{-1}[i]$};
  \node (5) at (6.5,0) [kvertex] {$\beta_s^{-1}[i]$};
  \node (9) at (3.25,1) [kvertex] {$\cdots$};
  \draw [->] (1)--(0.9,1);
  \draw [->] (1.1,1)--(2);
  \draw [->] (2) -- (2.9,1);
  \draw [->] (3.6,1)--(4);
  \draw [->] (4)--(5.4,1);
  \draw [->] (5.6,1)--(5);
  \draw [dotted] (0,0.3)--(0,1);
  \draw [dotted] (6.5,0.3)--(6.5,1);
\end{tikzpicture}
\]

If $s_2 \neq 0$, there is a class of $s_2$ AR-components with the
following edges:

\[
\begin{tikzpicture}
  \node (0) at (-0.5,0.5) [kvertex] {$\cdots$};
  \node (1) at (0,0) [kvertex] {$1_{A'_{s_2}}$};
  \node (s1) at (0,-0.4) [kvertex] {$[i]$};
  \node (2) at (2,0) [kvertex] {$1_{A'_{s_2-1}}$};
  \node (s2) at (2,-0.4) [kvertex] {$[i-1]$};
  \node (3) at (3.25,0) [kvertex] {$\cdots$};
  \node (4) at (4.5,0) [kvertex] {$1_{A'_1}$};
  \node (s4) at (4.5,-0.4) [kvertex] {$[i-s_2+1]$};
  \node (5) at (6.5,0) [kvertex] {$\beta_s^{-1}$};
  \node (s5) at (6.5,-0.4) [kvertex] {$[i-s_2+1]$};
  \node (7) at (7.75,0) [kvertex] {$\cdots$};
  \node (6) at (9,0) [kvertex] {$\beta_1^{-1}$};
  \node (s6) at (9,-0.4) [kvertex] {$[i-s_2+1]$};
  \node (8) at (11,0) [kvertex] {$1_{A'_{s_2}}$};
  \node (s8) at (11,-0.4) [kvertex] {$[i-s_2]$};
  \node (9) at (3.25,1) [kvertex] {$\cdots$};
  \node (10) at (7.75,1) [kvertex] {$\cdots$};
  \node (11) at (11.25, 0.5) [kvertex] {$\cdots$};
  \draw [->] (1)--(0.9,1);
  \draw [->] (1.1,1)--(2);
  \draw [->] (2) -- (2.9,1);
  \draw [->] (3.6,1)--(4);
  \draw [->] (4)--(5.4,1);
  \draw [->] (5.6,1)--(5);
  \draw [->] (5)--(7.4,1);
  \draw [->] (8.1,1)--(6);
  \draw [->] (6)--(9.9,1);
  \draw [->] (10.1,1)--(8);
\end{tikzpicture}
\]

\end{proposition}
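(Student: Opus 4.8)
The plan is to read off, directly from Bobi\'nski's algorithm as tabulated in Tables~\ref{omegaplus}--\ref{omegaminustrivial} (together with Proposition~\ref{tabellproposisjon} and Lemma~\ref{omegaminuslemma}), the almost split triangles around every string complex occurring in the claimed edges, and then to apply the classification of components from Section~\ref{sec:components} to identify the resulting component.

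Suppose first that $s_2=0$, so $s=s_1>0$, hence $\phi(s_1)=0$, and the inverse s-arrows are exactly $\beta_1^{-1},\dots,\beta_s^{-1}$; by Lemma~\ref{reduksjonstyper} (or Definition~\ref{admissible}) none of them can be admissibly reduced. Fix $j$. Since $\beta_j$ is an s-arrow of length $1$, Table~\ref{omegaplus} gives $(\beta_j)^+=\emptyset$, so $(\beta_j^{-1})_+=((\beta_j)^+)^{-1}=\emptyset$ by Lemma~\ref{omegaminuslemma}, and Table~\ref{omegaminus} gives $(\beta_j^{-1})_-=\emptyset$. Hence the almost split triangle starting in $\beta_j^{-1}[i]$ has the single middle term $(\beta_j^{-1})^+[i+m'(\beta_j^{-1})]$ and the one ending in $\beta_j^{-1}[i]$ has the single middle term $(\beta_j^{-1})^-[i]$; evaluating the relevant entries of Table~\ref{omegaplus} at $s_2=0,\ s_1>0$ gives $m'(\beta_j^{-1})=m''(\beta_j^{-1})=0$, so all terms sit in degree $i$. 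Finally one computes the cone term $\tau^{-1}(\beta_j^{-1}[i])=(\beta_j^{-1})^+_+[i]$: from $(\beta_j^{-1})^+=ccw\_s(t(\beta_j^{-1}))\cdot\beta_j^{-1}$ and the definition of the clockwise s-reduction one gets $(\beta_j^{-1})^+_+=\beta_{j-1}^{-1}$ for $2\le j\le s$ and $(\beta_1^{-1})^+_+=\beta_s^{-1}$. Thus $\tau^{-1}$ permutes $\{\beta_1^{-1}[i],\dots,\beta_s^{-1}[i]\}$ cyclically, so $\tau^{s}$ fixes each of these complexes. Since these almost split triangles have one middle term the underlying tree of the component is not $A_\infty^\infty$, and since the tower $(\beta_j^{-1})^+,\ ((\beta_j^{-1})^+)^+,\dots$ consists of string complexes of strictly increasing length (Corollary~\ref{diagonaler}) it is not a finite $A_n$; by Section~\ref{sec:components} the component is therefore a tube, and the mouth has $\tau$-period $s$, so it is a tube of rank $s$. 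Letting $i$ vary produces the asserted family of such tubes, equal up to shift --- the first displayed edge.

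Now suppose $s_2\ne 0$. The non-reducible strings anchoring the component are now the trivial strings $1_{A'_1},\dots,1_{A'_{s_2}}$ (no trivial string can be reduced) and the inverse s-arrows; together with the remaining strings listed in the second edge they are treated exactly as above. Tables~\ref{omegaplus}--\ref{omegaminustrivial} and Lemma~\ref{omegaminuslemma} show that for each such $\omega$ one of $\{\omega^+,\omega_+\}$ and one of $\{\omega^-,\omega_-\}$ is $\emptyset$, so the edge again consists of complexes with one-middle-term triangles, and reading off the surviving neighbour with its shift $m'$ (equal to $0$, $-1$ or $\phi(s_1)$ according to the row and to whether $s_1=s-s_2$ vanishes) reproduces precisely the complexes and degrees shown, the trivial strings $1_{A'_k}$ producing the drop in degree. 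Iterating by Proposition~\ref{todiagonaler} and Corollary~\ref{diagonaler} completes the edge; traversing it once carries $1_{A'_{s_2}}[i]$ to $1_{A'_{s_2}}[i-s_2]$, so its $\tau$-orbit is infinite, and the component --- still containing one-middle-term triangles and infinite towers --- is of type $\zainf$ with $\tau$ acting on the mouth so that one loop of the displayed edge equals the shift $[s_2]$. As $1_{A'_{s_2}}[i]$ and $1_{A'_{s_2}}[i-s_2]$ lie in the same component, two values of $i$ give the same component exactly when they are congruent modulo $s_2$, so there are exactly $s_2$ of these components. The analogous statement with $r_1,r_2$ in place of $s_1,s_2$ follows by the left--right symmetry of the construction.

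The main work, and the main obstacle, is the combinatorial verification behind the second sentence of each case: checking through the many rows of Tables~\ref{omegaplus}--\ref{omegaminustrivial} --- the special vertex $C$, the endpoints $\beta_1$ and $\beta_s$, the vertex types $F$ and $B'$ that occur as $t(\beta_j^{-1})$, and above all the degree shifts $m'(\omega)$ and $m''(\omega)$ --- that the surviving neighbours and their degrees are exactly the ones needed for the edge to close up into a rank-$s$ tube when $s_2=0$ and to return to itself shifted by $[s_2]$ when $s_2\ne 0$; the identities required are those recorded in Appendix~\ref{bobinskiappendix}.
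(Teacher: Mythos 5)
Your overall route is the same as the paper's: use Proposition~\ref{tabellproposisjon} (i.e.\ Tables~\ref{omegaplus}--\ref{omegaminustrivial}) together with Lemma~\ref{omegaminuslemma} to see that one middle term of the relevant almost split triangles vanishes, and then assemble the edge by direct computation; your $s_2=0$ case is correct (and even establishes the rank of the tube, which the paper defers to the corollary after the proposition).

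There is, however, one step in your $s_2\neq 0$ case that fails as written: the blanket claim that for each string listed in the second displayed edge one of $\{\omega^+,\omega_+\}$ and one of $\{\omega^-,\omega_-\}$ is $\emptyset$. This is true for the trivial strings $1_{A'_k}$ and for the inverse s-arrows $\beta_j^{-1}$ with $s_2+1\le j\le s$, but it is false for $\beta_j^{-1}$ with $1\le j\le s_2$: for those, Table~\ref{omegaplus} gives $(\beta_j^{-1})^+=ccw\_s(t\beta_j^{-1})\cdot\beta_j^{-1}\neq\emptyset$, and Lemma~\ref{omegaminuslemma} together with the row for $\beta_j$, $1\le j\le s_2$, gives $(\beta_j^{-1})_+=\bigl((\beta_j)^+\bigr)^{-1}\neq\emptyset$, so these complexes have two middle terms and do not lie on the edge at all. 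The edge in fact terminates at $\beta_{s_2+1}^{-1}$ (this is what makes the $\tau$-period equal to $s$, hence $\tau^{s}=[s_2]$, and it is what the example shows: in Figure~\ref{edge2} the last two-term complex is $(P_{12}\to P_{13})$, i.e.\ $\beta_{s_2+1}^{-1}=o^{-1}$, not $\beta_1^{-1}$); so your verification must be restricted to $1_{A'_{s_2}},\dots,1_{A'_1},\beta_s^{-1},\dots,\beta_{s_2+1}^{-1}$. A smaller caveat on the degrees: the drop by one at each trivial string is obtained from $m''=-1$ there, which is what Bobi\'nski's algorithm (Appendix~\ref{bobinskiappendix}) yields and what Figures~\ref{edge2} and~\ref{komponent1} display, whereas the printed row $A'$ of Table~\ref{omegaplustrivial} reads $0$ for $k>1$; so the degrees genuinely require the ``direct calculation'' step and cannot simply be read off the tables as your sketch suggests.
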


\begin{proof}
  From Proposition \ref{tabellproposisjon}, it is clear that for any
  of the homotopy strings $1_{A'_i}$ for $1 \leq i \leq s_2$ and
  $\beta_j^{-1}$ for $s_2+1 \leq j \leq s$, we have
  $\omega_+=\emptyset$. Hence the string complexes shown in the above
  figures are all in some characteristic component.  The rest of the
  result follows from direct calculations.
\end{proof}

The characteristic components described in Proposition
\ref{randproposisjon} are called s-components.  Similarly, the
characteristic components depending on the parameters $r_1$ and $r_2$
are called r-components.

In the next corollaries we show that the r- and s-components are
tubes or $\zainf$-components, and that they are exactly the
characteristic AR-components containing string complexes.  We also
describe the string complexes occurring in such components. 

\begin{corollary}\label{allcharacteristicstrings}
  Let $\omega[i]$ be a string complex occurring in an s-component.
  Then $\omega$ is of the following form: $\omega = w_k\cdots w_1
  \omega'$ where $\omega'[j]$ is on the edge of the component (for
  some $j\in \mathbb{Z}$), and where $w_1, \cdots, w_k$ are the $k$
  first consecutive steps of the counter-clockwise s-walk starting
  in $t\omega'$.

  Similarly, let $\omega[i]$ be a string complex occurring in an
  r-component.  Then $\omega$ is of the form $\omega = w_k \cdots
  w_1 \omega'$ where $\omega'[j]$ is on the edge of the component (for
  some $j \in \mathbb{Z}$), and where $w_1, \cdots, w_k$ are the $k$
  first consecutive steps of the clockwise r-walk starting in
  $t\omega'$.
\end{corollary}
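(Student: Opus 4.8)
The plan is to reduce the statement to the structure of the upper/lower diagonals already established. By Proposition \ref{randproposisjon}, the string complexes on the edge of an s-component are precisely the $1_{A'_i}[j]$ for $1\leq i\leq s_2$ and the $\beta_k^{-1}[j]$ for $s_2+1\leq k\leq s$ (and symmetrically for r-components with the appropriate trivial strings and r-arrows). From the tables (Tables \ref{omegaplus}, \ref{omegaplustrivial}), for each of these edge strings $\omega'$ one has $\omega'_+ = \emptyset$, so the edge is exactly the bottom of the component; hence every string complex $\omega[i]$ in the component lies on an upper-right diagonal $(\omega', (\omega')^+, ((\omega')^+)^+,\ldots)$ emanating from some edge complex $\omega'[j]$. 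So it suffices to identify this upper-right diagonal.

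First I would check, using Proposition \ref{tabellproposisjon}, that for every edge string $\omega'$ of an s-component, $(\omega')^+$ has the form $\sigma\omega'$ where $\sigma$ is the first step of the counter-clockwise s-walk starting in $t\omega'$. For $\omega' = \beta_k^{-1}$ with $s_2+1\leq k\leq s$, the entry ``$\beta_i^{-1}$'' in Table \ref{omegaplus} gives $(\omega')^+ = ccw\_s(t\omega)\cdot\omega$, which is exactly prepending the first step of the counter-clockwise s-walk. For $\omega' = 1_{A'_i}$, Table \ref{omegaplustrivial} (row $A'$) gives $(\omega')^+ = ccw\_s(x)$, again the first step of that walk applied to the trivial string (and this agrees with the general form $\sigma\omega'$ since composing with a trivial string does nothing). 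Once the base case is in place, Corollary \ref{diagonaler} applies verbatim: if $\omega^* = \omega^+ = \sigma\omega$ with $\sigma$ a step in a $Q$-walk $W$, then the next string on the diagonal is $\sigma'\omega^+$ with $\sigma'$ the step of $W$ succeeding $\sigma$. Iterating, the string at height $k$ above the edge string $\omega'$ is $w_k\cdots w_1\omega'$ where $w_1,\ldots,w_k$ are the first $k$ consecutive steps of the counter-clockwise s-walk starting in $t\omega'$, which is exactly the claimed form. The r-component case is identical after replacing ``counter-clockwise s-walk'' by ``clockwise r-walk'' and using the symmetric statement of Proposition \ref{randproposisjon} together with the ``$\beta_i^{-1}$''-analogue rows (the rows for $\alpha_i^{-1}$ in the r-arrow range, and the $A$ row of Table \ref{omegaplustrivial}).

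The only genuine subtlety — and the step I expect to require the most care — is the transition from the trivial edge string $1_{A'_{s_2}}$ to a non-trivial string: when one prepends $ccw\_s(A'_{s_2})$ one must confirm that the resulting homotopy string's last letter is again of the type covered by the ``$ccw\_s(t\omega)\cdot\omega$'' rows of Table \ref{omegaplus}, so that Corollary \ref{diagonaler} may be invoked uniformly for all heights $k\geq 1$; this is precisely the content of the computation ``$t(\omega^+)\in A\cup D$ with $\alpha_l$ an appropriate $\gamma$ or inverse r-/s-arrow'' in the proof of Corollary \ref{diagonaler}, transported to the counter-clockwise s-walk via the mirror symmetry. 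Once this is checked, the result follows by a straightforward induction on $k$.
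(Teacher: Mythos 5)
Your proposal is correct and follows essentially the same route as the paper: identify the edge complexes via Proposition \ref{randproposisjon}, observe from Tables \ref{omegaplus} and \ref{omegaplustrivial} that $(\omega')^+ = ccw\_s(t\omega')\cdot\omega'$ (resp.\ $cw\_r$) on the edge, and then iterate Corollary \ref{diagonaler} along the upper right diagonal. The only difference is that you spell out the base case for the trivial edge strings $1_{A'_i}$ and the passage to non-trivial strings explicitly, which the paper leaves implicit in its appeal to the tables.
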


\begin{proof}
  From Tables \ref{omegaplus}--\ref{omegaminustrivial}, we know that
  for a string complex $\omega[i]$ on the edge of an s-component, we
  have that $\omega^+ = ccw\_s(t\omega)\cdot \omega$. Then, from
  Corollary \ref{diagonaler}, we know that the upper right diagonal of
  $\omega$ will continue to grow with succeeding steps in a 
  counter-clockwise s-walk.

  The other case is proved similarly.
\end{proof}

\begin{corollary}
  If $s_2=0$, then the s-components are tubes of rank $s_1$.  If
  $s_2 >0$, then the s-components are of type $\zainf$ with $\tau^s
  = [s_2]$.
\end{corollary}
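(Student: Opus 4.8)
The plan is to combine the explicit description of the edge of an s-component from Proposition~\ref{randproposisjon} with the general facts about AR-components recalled in Section~\ref{sec:components}, and then to read the $\tau$-period off the edge.

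First I would pin down the underlying tree of an s-component. By Section~\ref{sec:components}, the underlying tree of any component of the AR-quiver of $\kbp$ is $A_n$, $A_\infty$ or $A_\infty^\infty$, since $\alpha(\chi)\le 2$ for every AR-triangle. An s-component has a mouth --- the edge displayed in Proposition~\ref{randproposisjon} --- so its tree is not $A_\infty^\infty$. On the other hand, by Corollary~\ref{allcharacteristicstrings} together with Corollary~\ref{diagonaler}, the string complexes of an s-component are exactly the $w_k\cdots w_1\omega'$, suitably shifted in degree, where $\omega'$ lies on the edge and $w_1,\dots,w_k$ are consecutive steps of a counter-clockwise s-walk; since every step of a $Q$-walk has positive length, these have strictly increasing length, so the component contains pairwise non-isomorphic complexes arbitrarily far from the edge, and hence its tree is not $A_n$. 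Therefore an s-component is of type $\zainf$ or is a tube $\zainf/\idgen{\tau^{d}}$ for some $d\ge 1$.

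Next I would determine how $\tau$ acts along the mouth, directly from the meshes in Proposition~\ref{randproposisjon} (equivalently, from Proposition~\ref{tabellproposisjon}, using that on the edge one of $\omega^+,\omega_+$ is $\emptyset$): applying $\tau$ amounts to moving one step along the edge. Suppose $s_2=0$. Then the mouth is the cycle $\beta_s^{-1}[i],\beta_{s-1}^{-1}[i],\dots,\beta_1^{-1}[i]$, all in one fixed degree, of length $s=s_1$; its terms are pairwise non-isomorphic because distinct homotopy strings yield non-isomorphic indecomposables (\cite{bekkert}), so the mouth is a single $\tau$-orbit forming a cycle of length $s_1$, which forces the component to be $\zainf/\idgen{\tau^{s_1}}$, a tube of rank $s_1$. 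Suppose instead $s_2>0$. Then one period of the mouth runs through the $s$ pairwise non-isomorphic complexes obtained from $1_{A'_{s_2}},1_{A'_{s_2-1}},\dots,1_{A'_1}$ and the $s_1$ inverse s-arrows, after which the homotopy string returns to $1_{A'_{s_2}}$ with its degree changed by $s_2$. Thus $[s_2]$ carries one mouth object of the component to another, so it restricts to an automorphism of the component; since $\operatorname{Aut}(\zainf)=\langle\tau\rangle$ and $[s_2]$ moves mouth objects by one full period (that is, $s$ steps) while $\tau$ moves them by a single step, we obtain $\tau^s=[s_2]$. In particular $\tau$ acts freely on the component, so it is not a tube and is therefore of type $\zainf$ with $\tau^s=[s_2]$.

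I expect the bookkeeping along the mouth to be the delicate point: one must check that a single edge step really is $\tau^{\pm 1}$ (and not $\tau^{\pm1}$ twisted by a shift), that one period of the mouth has length exactly $s=s_1+s_2$, and that the accumulated degree shift over that period is exactly $s_2$ with the sign matching the shift convention that gives $\tau^s=[s_2]$. This is a finite verification against the tables of Proposition~\ref{tabellproposisjon} and Bobi\'nski's algorithm (Theorem~\ref{bobinskimain}), once the homotopy strings with $\omega_+=\emptyset$ have been identified; the analogous statement for r-components then follows by interchanging the roles of $r_1,r_2$ and $s_1,s_2$.
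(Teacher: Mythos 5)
Your proposal is correct and follows the route the paper itself intends: the paper states this corollary without any proof, treating it as an immediate consequence of the edge description in Proposition~\ref{randproposisjon}, the description of the diagonals in Corollary~\ref{allcharacteristicstrings}, and the Riedtmann-type constraints recalled in Section~\ref{sec:components}, which is exactly the combination you use. Your write-up merely makes explicit the bookkeeping (pairwise non-isomorphy of the mouth objects, ruling out $A_n$ and $A_{\infty}^{\infty}$, and reading $\tau^{s}=[s_2]$ off the periodicity and degree drop along the mouth) that the authors leave to the reader, so there is nothing genuinely different or missing to report.
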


\begin{corollary}\label{allcomp}
  The r- and s-components are exactly the characteristic
  components containing string complexes.
\end{corollary}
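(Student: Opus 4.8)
The plan is to read off, from part~(ii) of Theorem~\ref{bobinskimain}, Lemma~\ref{omegaminuslemma} and Tables~\ref{omegaplus}--\ref{omegaplustrivial}, exactly which string complexes sit at the boundary of their AR-component, and then to identify those boundaries with the edges already exhibited in Proposition~\ref{randproposisjon} and its r-analogue.

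One inclusion is quick. By Proposition~\ref{randproposisjon} and the preceding corollary, every r-component and every s-component is a tube of positive rank or a $\zainf$-component, and each of them contains string complexes, namely the ones displayed on its edge. Since a tube of rank $\geq 1$ and a $\zainf$-component both have AR-triangles with a single middle term --- precisely those along the edge --- all r- and s-components are characteristic components containing string complexes. It remains to prove the reverse inclusion.

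So let $\mathcal C$ be a characteristic AR-component containing a string complex. First I would observe that then every object of $\mathcal C$ is a string complex: by Theorem~\ref{bobinskimain} the AR-triangle starting in a string complex has only string complexes (or zero objects) among its remaining three terms, while the one starting in a band complex has only band complexes, so connectedness of $\mathcal C$ forces the former. Hence every AR-triangle of $\mathcal C$ has the shape
\[
\omega[m] \rightarrow \omega^+[m+m'(\omega)]\oplus \omega_+[m] \rightarrow \omega^+_+[m+m''(\omega)] \rightarrow \omega[m-1],
\]
and the hypothesis that $\mathcal C$ is characteristic says that for some object $\omega[m]$ of $\mathcal C$ this triangle has exactly one middle term, i.e.\ exactly one of $\omega^+$, $\omega_+$ equals $\emptyset$. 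Using $\omega_+=((\omega^{-1})^+)^{-1}$ from Lemma~\ref{omegaminuslemma} together with the isomorphism $X_{m,\omega}\cong X_{m+\deg\omega,\omega^{-1}}$, I may pass to $\omega^{-1}$ if necessary and assume $\omega^+=\emptyset$. Scanning Tables~\ref{omegaplus} and~\ref{omegaplustrivial} then shows that $\omega^+=\emptyset$ happens precisely when $\omega$ is an r-arrow, an s-arrow, or one of the trivial homotopy strings $1_{A_i}^{-1}$, $1_{A'_i}^{-1}$; every other entry in the $\omega^+$ column is nonempty (a concatenation of $\omega$ with a step of a $Q$-walk, a reduction of $\omega$, or a trivial string).

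It remains to match these complexes with the edges of Proposition~\ref{randproposisjon}. Using $X_{m,\alpha}\cong X_{m+1,\alpha^{-1}}$ for an r- or s-arrow $\alpha$ and $X_{m,1_x}=X_{m,1_x^{-1}}$ for $x$ of type $A$ or $A'$, the complex $\omega[m]$ is, up to a shift, one of the string complexes on the edge of the r-component (if $\omega$ is an r-arrow or $x$ is of type $A$) or of the s-component (if $\omega$ is an s-arrow or $x$ is of type $A'$) described there; hence $\mathcal C$ is that r- or s-component, and the proof is complete. The one genuinely substantive point is this last matching step: one must verify that the lower right diagonal of $\omega$ reproduces exactly the homotopy strings occurring on that edge. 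For this I would invoke Corollary~\ref{allcharacteristicstrings} together with the fact (already established via Corollary~\ref{diagonaler}) that $\omega_+$ grows by appending successive steps of a single counter-clockwise s-walk, respectively clockwise r-walk; the table bookkeeping in the previous paragraph is routine.
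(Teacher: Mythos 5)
Your proposal is correct and follows essentially the same route as the paper: the paper's own (one-line) proof is precisely the observation that $\emptyset$ occurs in Tables \ref{omegaplus}--\ref{omegaminustrivial} only for the homotopy strings (up to inversion) appearing in Proposition \ref{randproposisjon}, which is exactly your table scan after reducing to the $\omega^+$ case via Lemma \ref{omegaminuslemma}. The extra steps you include (all objects of the component are string complexes by Theorem \ref{bobinskimain}, and the identification $X_{m,\omega}\cong X_{m+\deg\omega,\omega^{-1}}$) are just the details the paper leaves implicit, and your final ``matching'' worry is already settled by Proposition \ref{randproposisjon} itself, since containing one of the listed edge complexes is what it means to be an r- or s-component.
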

\begin{proof}
  This is clear, as in Tables
  \ref{omegaplus}--\ref{omegaminustrivial} the empty homotopy string
  $\emptyset$ occurs only for the homotopy strings listed in
  Proposition \ref{randproposisjon}.
\end{proof}

\begin{corollary}\label{nocentrals}
Let $\omega[i]$ be a string complex in a characteristic
component. Then $\omega$ is not a central homotopy string.
\end{corollary}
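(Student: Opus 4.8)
The plan is to use the structural results already in place to cut the claim down to a short explicit check. First I would invoke Corollary \ref{allcomp}: a string complex lying in a characteristic component lies in an r-component or an s-component. It suffices to treat the s-component case, the r-component case being entirely analogous with $r$ in place of $s$ and the clockwise r-walk in place of the counter-clockwise s-walk. By Corollary \ref{allcharacteristicstrings} I may then write $\omega = w_k\cdots w_1\omega'$, where $\omega'[j]$ lies on the edge of the component for some $j$ and $w_1,\dots,w_k$ are the first $k$ consecutive steps of the counter-clockwise s-walk starting in $t\omega'$; moreover, by the proof of Proposition \ref{randproposisjon}, the only possibilities for $\omega'$ are the trivial strings $1_{A'_a}$ with $1\le a\le s_2$ and the inverse s-arrows $\beta_j^{-1}$ with $s_2<j\le s$.

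I would then split on $k$. If $k=0$, then $\omega=\omega'$ and I rule out centrality directly: a trivial homotopy string $1_{A'_a}$ is attached to a vertex of type $A'$, while the only central trivial strings are those of type $B,B',F$ or $F'$; and $\beta_j^{-1}$ with $j>s_2$ is a non-trivial homotopy string with $\alpha_1(\omega)=\beta_j^{-1}$, which is not among the letters permitted as the first letter of a central homotopy string (the permitted $\beta$-inverses are $\beta_\ell^{-1}$ with $\ell\le s_2$). Hence an edge string is never central.

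If $k\ge 1$, the point is that $\omega$ can be admissibly reduced, so Lemma \ref{centralnoreduction} immediately gives that $\omega$ is not central. Set $\eta=w_{k-1}\cdots w_1\omega'$. Combining Corollary \ref{diagonaler} with the description of the upper right diagonal of an edge string given in the proof of Corollary \ref{allcharacteristicstrings}, one obtains $\eta^+=w_k\cdots w_1\omega'=\omega$; thus $\omega$ occurs as a middle term of the almost split triangle starting in the string complex on $\eta$, and so, reading off the almost split triangle ending in $\omega$ from Figure \ref{omeganeighbours}, $\eta$ must be one of $\omega^-$ or $\omega_-$. Since every step of a walk has positive length we have $\eta\ne\emptyset$ and $l(\eta)<l(\omega)$, so by Corollary \ref{admarcorollar} (if $\eta=\omega_-$) or by its analogue for right admissible reductions (if $\eta=\omega^-$), $\eta$ is a left or right admissible reduction of $\omega$. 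Therefore $\omega$ can be admissibly reduced and Lemma \ref{centralnoreduction} applies.

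I expect the delicate step to be the $k\ge 1$ case, and in particular the passage from ``$\eta^+=\omega$'' to ``$\eta\in\{\omega^-,\omega_-\}$'': this uses the standard fact that the indecomposables admitting an irreducible map into $\omega$ are exactly the summands of the middle term of the almost split triangle ending in $\omega$, together with some care that the degree shifts do not alter the underlying homotopy strings. If one prefers to avoid this, there is a more computational alternative: from the definition of $cw\_r$ (and its mirror $ccw\_s$) one checks that the last letter of any walk step is always some $\gamma_{2b}$, some $\delta_{2b}$, or the inverse of an r- or an s-arrow, none of which is permitted as the last letter of a central homotopy string; since $\alpha_l(\omega)=\alpha_l(w_k)$, this again rules out centrality when $k\ge 1$, at the cost of a somewhat longer case analysis.
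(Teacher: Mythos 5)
Your proof is correct and follows essentially the same route as the paper: edge complexes are ruled out directly (the paper merely asserts this, you verify it via Proposition \ref{randproposisjon} and the definition of central homotopy strings), and non-edge complexes are shown to be admissibly reducible via Corollary \ref{allcharacteristicstrings}, so Lemma \ref{centralnoreduction} applies. Your elaboration of the step from $\eta^{+}=\omega$ to $\eta\in\{\omega^-,\omega_-\}$ (or the alternative last-letter check) simply fills in what the paper dismisses as ``clear,'' and it is sound, noting only that identifying a string complex with its homotopy string up to inversion is harmless here since centrality and the reduction criteria behave well under inverting.
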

\begin{proof} If $\omega[i]$ is a string complex on the edge of a
  characteristic component, then $\omega$ is not a central homotopy
  string.  Let $\omega'[j]$ be a string complex in a characteristic
  component, which is not on the edge.  Then by Corollary
  \ref{allcharacteristicstrings} it is clear that $\omega'$ can be
  admissibly reduced.  By Lemma \ref{centralnoreduction} no central
  homotopy string can be in this characteristic component.
\end{proof}

From Corollary \ref{allcharacteristicstrings}, it is clear that no
string complex $\omega[i]$ where $\omega$ is of type (iii) in Lemma
\ref{reduksjonstyper} can be in a characteristic component.

\subsection{The $\zainfinf$-components}
\label{sec:zainfinf}

From Corollary \ref{allcomp}, Corollary
\ref{nocentrals} and Section \ref{sec:components}, it is clear that if
$\omega[i]$ is a string complex where $\omega$ is of type (ii) or
(iii) in Lemma \ref{reduksjonstyper}, then $\omega[i]$ is in a
component of type $\zainfinf/G$ where $G$ is an admissible group of
automorphisms on $\zainfinf$.

Let $\omega[i]$ be a string complex where $\omega$ is a central
homotopy string.  In the following lemma, we show that if $\upsilon$
is another central homotopy string, then the string complex
$\upsilon[j]$ can not be in the same AR-component as $\omega[i]$.

\begin{lemma}\label{lem:centralhomstr}
  Let $\omega[i]$ and $\upsilon[j]$ be string complexes where $\omega$
  and $\upsilon$ are central homotopy strings. If $\omega[i] \ncong
  \upsilon[j]$, then $\omega[i]$ and $\upsilon[j]$ are in different
  AR-components.
\end{lemma}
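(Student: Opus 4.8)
The plan is to show that the two string complexes lie in different AR-components by showing that neither can be reached from the other by repeatedly applying the AR-translate and the irreducible maps described in Theorem~\ref{bobinskimain}. The key fact to exploit is Lemma~\ref{centralnoreduction}: a central homotopy string admits no admissible reduction, so by Corollary~\ref{admarcorollar} neither $\omega^+$ nor $\omega_-$ can be strictly shorter than $\omega$. This means that when we move inside the component of $\omega[i]$ via irreducible maps, every step either keeps the length the same or strictly increases it; more precisely, starting from $\omega[i]$ and applying $(-)^+$, $(-)_+$, $(-)^-$, $(-)_-$ we only ever \emph{grow} the central string by prepending or appending steps of $Q$-walks (by Proposition~\ref{tabellproposisjon} and Corollary~\ref{diagonaler}). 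So the central homotopy string $\omega$ sits, up to inversion, as a common ``core'' at the bottom (or more precisely at the unique place of minimal length) of its entire AR-component.

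\textbf{Key steps.} First I would make precise the notion of a minimal-length object in the component: since by Corollary~\ref{admarcorollar} a central homotopy string cannot be admissibly reduced, the complex $\omega[i]$ sits at a ``corner'' of its component in the sense that all four of $\omega^+$, $\omega_+$, $\omega^-$, $\omega_-$ have length $\ge l(\omega)$, with equality only in the degenerate cases covered by the tables (where the ``reduction'' removes a prefix/suffix of length $\le$ some small bound — these cases need to be checked by hand against Tables~\ref{omegaplus}--\ref{omegaminustrivial}). Second, using Proposition~\ref{todiagonaler} and Corollary~\ref{diagonaler}, I would argue that every string complex $\nu[k]$ in the component of $\omega[i]$ has the form $\nu = (\text{walk steps})\cdot\omega\cdot(\text{walk steps})$ or its inverse, so $\omega$ (up to inversion and rotation, but there is no rotation for strings) is recoverable from $\nu$ as the ``irreducible core'' obtained by maximally admissibly reducing $\nu$ from both ends. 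Third, I would invoke Lemma~\ref{reduksjonstyper}: if $\upsilon[j]$ were in the same component as $\omega[i]$, then since $\upsilon$ is also central and hence irreducible, maximally admissibly reducing any common object $\nu$ from both ends must yield \emph{both} $\omega^{\pm 1}$ and $\upsilon^{\pm 1}$; as the result of maximal admissible reduction is essentially unique (the reductions commute appropriately — this is where the bookkeeping in Proposition~\ref{todiagonaler} is used), we conclude $\upsilon = \omega^{\pm 1}$. Finally, once $\upsilon = \omega^{\pm 1}$, a direct degree count using $X_{m,\omega}\cong X_{m+\deg\omega,\omega^{-1}}$ forces $\upsilon[j]\cong\omega[i]$, contradicting the hypothesis.

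\textbf{Main obstacle.} The delicate point is the essential uniqueness of the ``irreducible core'': I need that starting from an arbitrary object $\nu[k]$ in the component and applying admissible reductions in whatever order (alternating left-end and right-end reductions, i.e.\ moving along the various diagonals of Figure~\ref{omeganeighbours}), one always lands on the \emph{same} central homotopy string up to inversion. This is exactly the content guaranteed by Proposition~\ref{todiagonaler}: knowing $\omega^+$ (resp.\ $\omega_-$) on one diagonal determines the corresponding operation on all parallel diagonals, so the four ``corner'' operations in a rectangle of the AR-quiver commute, and the reduction process is confluent. I would set this up as a short lemma (or fold it into the proof): in a $\zainfinf$-component, the objects of minimal homotopy-string length form a single $\tau$-orbit up to shift and inversion, and that orbit consists of string complexes on a fixed central homotopy string. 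With that in hand the conclusion is immediate. A secondary, more routine obstacle is handling the degenerate rows of Tables~\ref{omegaplus}--\ref{omegaminustrivial} where a short prefix is stripped without this being a genuine admissible reduction; these have to be checked not to occur for central strings, which is precisely Lemma~\ref{centralnoreduction}, so they cause no real trouble.
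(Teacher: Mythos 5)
Your proposal is correct and follows essentially the same route as the paper: the paper also observes that for a central homotopy string all four neighbours $\omega^+$, $\omega_+$, $\omega^-$, $\omega_-$ arise by adding a step of a $Q$-walk (of four different types), and then invokes Corollary~\ref{diagonaler} and Proposition~\ref{todiagonaler} to conclude that every object of the component contains $\omega$ as its strictly shorter core, so no other central string (or shift of $\omega$) can occur there. Your ``confluence of admissible reductions'' framing is just a repackaging of exactly this use of Proposition~\ref{todiagonaler}, so no genuinely new ingredient is involved.
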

\begin{proof}
  Let $\omega$ be a central homotopy string. From Tables
  \ref{omegaplus}--\ref{omegaminustrivial}, it is easy to verify that
  each of $\omega^+$, $\omega_+$, $\omega^-$ and $\omega_-$ is
  obtained by adding (possibly the inverse of) a step of some $Q$-walk
  to $\omega$, and that the four $Q$-walks involved are of four
  different types. By Corollary \ref{diagonaler} and Proposition
  \ref{todiagonaler}, there can be no central homotopy string
  different from $\omega$ in the AR-component.
\end{proof}

It follows from the proof of Lemma \ref{lem:centralhomstr} that the
string complex $\omega[i]$, where $\omega$ is a central homotopy
string, never occurs twice in the same component.  Hence, the only
possibility for $G$ is the trivial group, and thus there is a class of
$\zainfinf$-components which up to shift are parametrized by the
central homotopy strings.  Note that for each $\zainfinf$-component in
this class, the corresponding central homotopy string $\omega$ is of
strictly smaller length than any homotopy string $\nu$ where $\nu[j]$
is in the same component for some $j \in \mathbb Z$.

We now consider string complexes of the form $\omega[i]$ where
$\omega$ is of type (iii) in Lemma \ref{reduksjonstyper}, that is a
trivial homotopy string corresponding to a vertex of type $C, D$ or
$D'$.

For a string complex $\omega[i]$ in an AR-component of type
$\zainfinf$, we define the upper right diagonal of $\omega[i]$ to be
the sequence $(\omega[i],\omega^+[i'],
(\omega^+)^+[i''],\ldots)$ where $i' = i+m(\omega)$ and $i'' =
i'+m(\omega^+)$.  Similarly, we define the lower right diagonal of
$\omega[i]$, the upper left diagonal of $\omega[i]$ and the lower left
diagonal of $\omega[i]$.

The following lemma is illustrated in Example 2.1 revisited in
Section \ref{eksempler}.

\begin{lemma}\label{lem:uglycomp}
  All stalk complexes $\omega[i]$ where $\omega$ is a trivial homotopy
  string corresponding to a vertex of type $C, D$ or $D'$ and $i$ is a
  fixed integer, are in the same $\zainfinf$-component.  There are
  irreducible maps, which depend on the values of the parameters $r_1$
  and $s_1$, as described below:
  \begin{itemize}
  \item If $r_1> 0$ the irreducible maps in the lower
    right diagonal of $1_C^{-1}[i]$ are
    \[
    1_C^{-1}[i] \rightarrow 1_{D_{r_1-1}}^{-1}[i] \rightarrow\cdots
    1_{D_1}^{-1}[i]\rightarrow 1_{D_0}^{-1}[i].
    \]
  \item If $s_1 > 0$ the irreducible maps in the upper right diagonal
    of $1_C^{-1}[i]$ are
    \[1_C^{-1}[i]\rightarrow 1_{D'_{r_1-1}}[i]\rightarrow\cdots
    1_{D'_1}[i]\rightarrow 1_{D'_0}[i].
    \]
  \end{itemize}
  Furthermore, for each $j$ in $\mathbb{Z}$ with $j \neq i$ and
  $\omega$ as above, $\omega[j]$ and $\omega[i]$ are not in the same
  component.
\end{lemma}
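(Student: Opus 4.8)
The plan is to read off the diagonals of string complexes through the relevant stalk complexes directly from Tables~\ref{omegaplustrivial} and~\ref{omegaminustrivial} (using $\omega_+=((\omega^{-1})^+)^{-1}$ and $\omega^-=((\omega^{-1})_-)^{-1}$ from Lemma~\ref{omegaminuslemma}), and then to invoke Corollary~\ref{diagonaler} and Proposition~\ref{todiagonaler} to control the rest of the component. First I would dispose of the degenerate case: if there is no vertex of type $C$, $D$ or $D'$ the statement is empty; otherwise, since vertices of type $D$ (resp.\ $D'$) occur only when $r_1>0$ (resp.\ $s_1>0$), and a vertex of type $C$ occurs only in cases~$3$--$6$ of Table~\ref{parameterverdier}, at least one of $r_1,s_1$ is positive. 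I would also use repeatedly that a trivial homotopy string has degree $0$, so $1_x[m]\cong 1_x^{-1}[m]$; thus a stalk complex $P_x$ in degree $m$ may be denoted either way.

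Assuming $r_1>0$, I would then compute the lower right diagonal of $1_C^{-1}[i]$: the $C$- and $D$-rows of Table~\ref{omegaplustrivial} give $(1_C^{-1})_+=1_{D_{r_1-1}}^{-1}$, then $(1_{D_k}^{-1})_+=1_{D_{k-1}}^{-1}$ for $0<k\le r_1-1$, and finally $(1_{D_0}^{-1})_+=(cw\_r(D_0))^{-1}$, which has positive length; a short degree computation shows all these complexes sit in degree $i$. Hence the diagonal is
\[
1_C^{-1}[i]\rightarrow 1_{D_{r_1-1}}^{-1}[i]\rightarrow\cdots\rightarrow 1_{D_1}^{-1}[i]\rightarrow 1_{D_0}^{-1}[i],
\]
the displayed arrows being the irreducible maps of the relevant almost split triangles, with the diagonal continuing past $1_{D_0}^{-1}[i]$ through complexes of positive string length. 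The symmetric computation with the $C$- and $D'$-rows shows that for $s_1>0$ the upper right diagonal of $1_C^{-1}[i]$ is $1_C^{-1}[i]\rightarrow 1_{D'_{s_1-1}}[i]\rightarrow\cdots\rightarrow 1_{D'_0}[i]$, again all in degree $i$ (so the subscript in the statement should read $s_1-1$). Since $1_{D_k}[i]\cong 1_{D_k}^{-1}[i]$ and likewise for the $D'_k$, these two chains of irreducible maps connect all the stalk complexes $1_C[i]$, $1_{D_k}[i]$ ($0\le k\le r_1-1$) and $1_{D'_k}[i]$ ($0\le k\le s_1-1$) inside a single AR-component, which by the discussion preceding the lemma is of the form $\zainfinf/G$.

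It then remains to show $G$ is trivial and that $\omega[j]$ does not lie in this component for $j\neq i$. For each stalk complex on the two chains I would read off, from Tables~\ref{omegaplustrivial}--\ref{omegaminustrivial} and Lemma~\ref{omegaminuslemma}, the two neighbours that do not lie on the chains: in every case the underlying homotopy string is the trivial one with one step (or prefix) of a $Q$-walk prepended or appended, hence of length $\ge 1$. Corollary~\ref{diagonaler} shows that moving further out along these off-chain diagonals only lengthens the string, and Proposition~\ref{todiagonaler} --- applied at the chain endpoints $1_{D_0}^{-1}[i]$ and $1_{D'_0}[i]$, and at $1_C^{-1}[i]$ --- propagates the estimate to the rest of the component. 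Thus every complex of the component other than the stalk complexes on the two chains has underlying homotopy string of strictly positive length, so the only stalk complexes in the component are $1_C[i]$, the $1_{D_k}[i]$ and the $1_{D'_k}[i]$, each occurring once and all in degree $i$; in particular $G$ is trivial and $\omega[j]$ is not in this component for any $j\neq i$.

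The step I expect to be the main obstacle is this last propagation argument: Corollary~\ref{diagonaler} controls only a single diagonal, so covering the whole $\zainfinf$-component with the ``positive length off the chains'' estimate requires Proposition~\ref{todiagonaler}, and care is needed because at the interior stalks $1_{D_k}^{-1}[i]$ only one of the two neighbours in each relevant pair is ``growing''. The remedy is to start the propagation at the three vertices $1_{D_0}^{-1}[i]$, $1_{D'_0}[i]$ and $1_C^{-1}[i]$, where enough neighbours grow that the hypotheses of Proposition~\ref{todiagonaler} are met, and thereby cover the whole component by the regions on which the length estimate is valid.
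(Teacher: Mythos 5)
Your proposal is correct and follows essentially the same route as the paper: the chains of irreducible maps among the stalk complexes are read off from Tables \ref{omegaplustrivial} and \ref{omegaminustrivial} (via Lemma \ref{omegaminuslemma}), and the triviality of $G$ together with the separation of different degrees $i$ is obtained by the same growing-length diagonal argument, via Corollary \ref{diagonaler} and Proposition \ref{todiagonaler}, that the paper invokes by referring to the proof of Lemma \ref{lem:centralhomstr}; you simply spell out the details the paper leaves implicit. Your observation that the subscript in the second chain should read $s_1-1$ rather than $r_1-1$ is also correct.
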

\begin{proof}
  We have seen that the string complexes arising from trivial homotopy
  strings corresponding to a vertex of type $C, D$ or $D'$ are in a
  component of the form $\zainfinf/G$. By a similar argument as in the
  proof of Lemma \ref{lem:centralhomstr} and its subsequent comment,
  $G$ is trivial.  The irreducible maps follow from Tables
  \ref{omegaplustrivial} and \ref{omegaminustrivial}.
\end{proof}

Hence, if at least one of $r_1$ and $s_1$ is non-zero, we get a class
of $\zainfinf$-components parametrized by $\mathbb Z$.  If $r_1 = s_1
= 0$, then we have no vertices of type $C, D$ or $D'$, and hence no
AR-component of type $\zainfinf$ as in Lemma \ref{lem:uglycomp}.

\subsection{Characteristic components containing band complexes}
\label{sec:band}
For any $Q$ in $\mathcal Q_n^{\star}$ there is always a homotopy band
$\omega=\beta_1^{-1}\cdots\beta_s^{-1}\alpha_r\cdots\alpha_1$, called
\emph{the central homotopy band of $Q$}. Note that it follows from the
definition of a homotopy band that there can be no homotopy bands
starting in a vertex of type $A, A', D_{\geq 1}$ and $D_{\geq 1}'$.  

\begin{proposition}
  Let $\Lambda\cong kQ/I$ be a cluster-tilted algebra of type
  $\widetilde{A}_n$. There are finitely many homotopy bands associated
  with $\Lambda$ if and only $\Lambda$ is hereditary.
\end{proposition}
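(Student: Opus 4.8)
The plan is to establish both directions by analyzing when the quiver admits homotopy bands other than rotations and inverses of the central homotopy band $\omega = \beta_1^{-1}\cdots\beta_s^{-1}\alpha_r\cdots\alpha_1$. Recall that for a gentle algebra, band complexes live in homogeneous tubes, so the number of such tubes (up to shift) is governed by $\mathfrak B$, the set of homotopy bands up to inversion and rotation. Thus ``finitely many homotopy bands'' is equivalent to ``$\mathfrak B$ is finite''.

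For the easy direction: if $\Lambda$ is hereditary, then by Table \ref{parameterverdier} and the discussion following it, we are in case 1 of Table \ref{parameterverdier}, i.e.\ $r_2 = s_2 = 0$. In this situation $Q$ has no vertices of type $A$, $A'$, $B$, $B'$, $F$ or $F'$; its only vertices are of type $\widetilde C$, $C$, $D$, $D'$, and $Q$ consists of two linear ``arms'' joined at the source $\widetilde C$ and the sink $C$, with no oriented $3$-cycles at all apart from the degenerate ones. One then checks directly that the only homotopy band is the central one $\omega = \beta_1^{-1}\cdots\beta_s^{-1}\alpha_r\cdots\alpha_1$ (and its inverse and rotations), since any closed walk must traverse each arm exactly once in opposite orientations; hence $\mathfrak B$ is a single element and there are finitely many homotopy bands.

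For the converse I would argue contrapositively: suppose $\Lambda$ is not hereditary, so at least one of $r_2, s_2$ is positive (we are in one of the cases 2--6). Then $Q$ contains an oriented $3$-cycle $C_0$ adjacent along an arrow to the rest of the quiver; concretely, there is an $r$-cycle or $s$-cycle sharing exactly one vertex with the central homotopy band $\omega$. Label the arrows of $C_0$ as $\gamma, \gamma', \gamma''$ with $\gamma\gamma', \gamma'\gamma'', \gamma''\gamma \in I$. The point is that we can splice a loop around $C_0$ into $\omega$: passing through the shared vertex, one can insert a detour of the form $(\text{enter }C_0)\,(\text{go around using a direct arrow and an inverse arrow})\,(\text{exit})$, and because the relations have length $2$ this detour is compatible as a homotopy string. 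Iterating this insertion $k$ times produces an infinite family $\{\omega_k\}_{k\ge 1}$ of pairwise non-equivalent homotopy bands (they have strictly increasing length, hence cannot be rotations or inverses of one another). Therefore $\mathfrak B$ is infinite.

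The main obstacle is the converse direction: one must exhibit the detours carefully and verify (a) that the resulting closed walk is genuinely a homotopy band --- in particular that $\deg(\omega_k) = 0$, that the endpoint homotopy-letter condition in the definition of a homotopy band is met, and that $\omega_k$ is not a proper power --- and (b) that distinct $k$ give inequivalent bands. Part (a) is where the length-$2$ gentleness relations and the local structure at a type-$B$, type-$F$, or type-$A$ vertex adjacent to a $3$-cycle must be used delicately; the cleanest approach is to first reduce, via Theorem \ref{derivertekvivalent}, to the normal form of Figure \ref{normalform} and then treat the vertex where $\gamma_1$ attaches to the central band, inserting the $3$-cycle loop $\gamma_2^{-1}\gamma_1$ (or its analogue) repeatedly. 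Part (b) is then immediate from the length count, since each insertion strictly increases $l(\omega_k)$ while the walk remains primitive.
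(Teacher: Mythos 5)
The hereditary direction of your argument is fine and agrees with the paper. The gap is in the converse. Your construction rests on inserting, at a vertex shared by the central band and an oriented $3$-cycle $C_0$, a closed detour that goes ``around $C_0$ using a direct arrow and an inverse arrow'', iterated $k$ times. No such detour exists: since $C_0$ is cyclically oriented, the only reduced closed walks based at one of its vertices are the full loops such as $\gamma_1\gamma_2\alpha_1$ or its inverse, and because all compositions of consecutive arrows of $C_0$ lie in $I$, such a loop consists of three direct (respectively three inverse) homotopy letters; the piece ``$\gamma_2^{-1}\gamma_1$'' you propose is not even composable as a path, let alone a closed loop. Hence every insertion changes $\deg$ by $\pm 3$ (and the alternative of replacing the passage $\alpha_1$ by the long way $\gamma_2^{-1}\gamma_1^{-1}$ changes it by $-3$ as well), so the resulting closed walks $\omega_k$ have $\deg(\omega_k)\neq 0$ and are not homotopy bands. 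You flag $\deg(\omega_k)=0$ as a point ``to verify'', but for the construction as described it genuinely fails, and it cannot in general be repaired by balancing insertions at different $3$-cycles: in case 3 of Table \ref{parameterverdier} with $r_2=1$ (Figure \ref{Qtype3}) there is only one $3$-cycle and the central band passes through it only once. (A minor further slip: an r-cycle shares an arrow, hence two vertices, with the central band, not one.)

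The paper's proof avoids this by not iterating local insertions at all. It writes down, in the two cases $r=r_2=1$ and $r>1$, a single explicit band $\omega$ that winds around the big cycle twice, once in each direction, passing the $3$-cycle through $\gamma_1\gamma_2$ on one pass and through $\gamma_2^{-1}\gamma_1^{-1}$ on the return pass, so that the direct and inverse homotopy letters balance and $\deg(\omega)=0$; the infinite family is then obtained as $\omega_n=\omega\cdot(\omega_c)^n$, appending powers of the central homotopy band $\omega_c$ (which itself has degree $0$), with inequivalence read off from the lengths. So the infinitude comes from extra central windings attached to one fixed degree-zero ``detour band'', not from repeated $3$-cycle insertions; to complete your argument you would have to replace your splicing step by such a degree-balanced construction and then check the composability at the junction of $\omega$ with $\omega_c^n$ and the primitivity of $\omega_n$, as the paper does.
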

\begin{proof}
  It is clear that for the hereditary case, the central homotopy band
  is the only homotopy band.  Assume now that $\Lambda$ is not
  hereditary, that is, we have $r_2 > 0$ (see Table
  \ref{parameterverdier}).  We can then construct one class consisting
  of infinitely many homotopy bands for the case when $r=r_2=1$, and
  one class consisting of infinitely many homotopy bands for the case
  when $r>1$.

  First, let $r=r_2=1$.  Then the homotopy string \[\omega =
  \beta_1^{-1}\cdots
  \beta_s^{-1}\gamma_2^{-1}\gamma_1^{-1}\alpha_1^{-1}\beta_s \cdots
  \beta_1\gamma_1\gamma_2\alpha_1\] is a homotopy band. Let 
  $\omega_c$ denote the
  central homotopy band.  Then the homotopy string
$
\omega_n = \omega \cdot (\omega_c)^n
$
is a homotopy band for all positive integers $n$.

Now let $r>1$.  Then the homotopy string \[ \omega = \beta_1^{-1}
\cdots \beta_s^{-1} \alpha_r \cdots \alpha_2
\gamma_2^{-1}\gamma_1^{-1}\alpha_1^{-1}\cdots \alpha_r^{-1}\beta_s
\cdots \beta_1 \gamma_1 \gamma_2 \alpha_1
\]
is a homotopy band. Again, let $\omega_c$ denote the central homotopy
band. Then the homotopy string
$
\omega_n = \omega \cdot (\omega_c)^n
$
is a homotopy band for all positive integers $n$. Thus, it is clear
that any non-hereditary cluster-tilted algebra of type $\ant$ will
have infinitely many homotopy bands associated with itself.
\end{proof}

Note that for most quivers in $\mathcal Q_n^{\star}$, there are also
ways of constructing classes of infinitely many homotopy bands other
than in the proof.

\subsection{Summary}
\label{summary}
In the following theorem we give a full overview of all the
AR-components of $\kbp$.  Note that we always assume that $r_2 > 0$
(see Table \ref{parameterverdier}).
\begin{theorem}\label{finalthm}
  Let $\mathfrak C$ be the set of central homotopy strings associated
  with $\Lambda$, and $\mathfrak B$ the set of homotopy bands
  associated with $\Lambda$.  The AR-quiver of $\kbp$
  consists of:
  \begin{itemize}
    \item A class of homogeneous tubes, parametrized by $\mathfrak B
      \times k \times \mathbb Z$.
    \item A class of s-components. If $s_2 = 0$, we get a class of
      tubes of rank $s_1$ parametrized by $\mathbb Z$.  If $s_2 >
      0$, we get $s_2$ components of type $\zainf$ with $\tau^{s} =
      [s_2]$.
    \item  $r_2$ components of type $\zainf$ with $\tau^{r} =
      [r_2]$.
    \item A class of $\zainfinf$-components containing all the stalk
      complexes corresponding to a vertex of type $C$, $D$ or $D'$,
      parametrized by $\mathbb Z$.
    \item A class of $\zainfinf$ components parametrized by $\mathfrak
      C \times \mathbb Z$.
    \end{itemize}
\end{theorem}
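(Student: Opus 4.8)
The plan is to assemble the theorem from the structural results already established, taking as the starting point the classification of indecomposables of $\kbp$ (\cite{bekkert}): every indecomposable object is a string complex $\omega[m]$ with $\omega\in\mathfrak W$ or a band complex $Y_{m,\omega,\mu}$ with $\omega\in\mathfrak B$, so it suffices to sort all of these into AR-components and to check that the resulting five families are pairwise disjoint and carry the stated parametrizations.

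First I would dispose of the band complexes. By Theorem~\ref{bobinskimain}(i) together with the description of the almost split sequences in $\mathcal C$, the component of $Y_{m,\omega,V_n(\lambda)}$ consists precisely of the complexes $Y_{m,\omega,V_j(\lambda)}$, $j\ge 1$, and is a homogeneous tube; since these triangles keep $\omega$ and $m$ fixed and $\mathfrak B$ is a set of representatives up to inversion and rotation, the tubes are distinct and parametrized by $\mathfrak B\times k\times\mathbb Z$. Moreover, by Theorem~\ref{bobinskimain}, a component containing a band complex contains only band complexes and a component containing a string complex contains only string complexes, so no band tube meets any other component; in particular every remaining component consists entirely of string complexes.

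Next I would locate the string complexes. By Lemma~\ref{reduksjonstyper} every string $\omega$ is carried, by finitely many admissible reductions, to a string $\bar\omega$ of type (i), (ii) or (iii); and each admissible reduction replaces $\omega$ by $\omega^+$ or $\omega_-$ (Lemma~\ref{admredlemma}), which by Figure~\ref{omeganeighbours} lies on an irreducible map to or from $\omega$, so $\omega[m]$ lies in the same component as $\bar\omega[\bar m]$. It thus suffices to locate the components of the three reduced types. The type-(i) string complexes are, up to the isomorphisms $X_{m,1_x}=X_{m,1_x^{-1}}$ and $\alpha_j[m]\cong\alpha_j^{-1}[m+1]$, exactly the ones appearing on the edges of the r- and s-components of Proposition~\ref{randproposisjon} and its mirror; by the corollary identifying those components they are tubes of rank $s_1$ when $s_2=0$ and of type $\zainf$ with $\tau^{s}=[s_2]$ when $s_2>0$, and symmetrically for $r$, where $r_2>0$ always. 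The relation $\tau^{s}=[s_2]$ forces the degrees of a fixed edge string inside one $\zainf$-component to exhaust a single coset of $s_2\mathbb Z$, so there are exactly $s_2$ such components when $s_2>0$ and a $\mathbb Z$-indexed family of rank-$s_1$ tubes when $s_2=0$; likewise one gets exactly $r_2$ components of type $\zainf$ with $\tau^{r}=[r_2]$. For the type-(ii) and type-(iii) string complexes, Corollary~\ref{nocentrals} and the remark after it show they lie in no characteristic component; as a component is not of type $\zainfinf$ precisely when it contains an almost split triangle with a single middle term (Section~\ref{sec:components}), these complexes lie in $\zainfinf$-components. Lemma~\ref{lem:centralhomstr} and its following comment then produce a $\zainfinf$-component for each element of $\mathfrak C\times\mathbb Z$, and Lemma~\ref{lem:uglycomp} shows that, for each fixed shift, all type-(iii) stalk complexes lie in a single $\zainfinf$-component, with distinct shifts giving distinct components.

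Finally I would check that the five families are pairwise disjoint, which is where the argument needs the most care. Band tubes are disjoint from the rest by the second paragraph; the r- and s-components are characteristic while the $\zainfinf$-families are not; an r-component and an s-component cannot coincide, since by Corollary~\ref{allcharacteristicstrings} their string complexes are built over incompatible edge strings using $Q$-walks of different types; and a type-(iii) $\zainfinf$-component is distinguished from a central one because, by the remark after Lemma~\ref{lem:centralhomstr}, a component containing a central homotopy string has that string as its unique strictly shortest homotopy string, whereas a type-(iii) component contains the non-central length-$0$ strings $1_C$, $1_D$, $1_{D'}$. Combining these observations with the parametrizations above yields the theorem. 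I expect the main obstacle to be precisely this disjointness-and-counting bookkeeping — in particular extracting the exact numbers $r_2$ and $s_2$ of $\zainf$-components from the periodicities $\tau^{r}=[r_2]$ and $\tau^{s}=[s_2]$ — rather than any further homological input.
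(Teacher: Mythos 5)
Your proposal is correct and follows essentially the same route as the paper: the paper's own proof of Theorem \ref{finalthm} is precisely this assembly, citing Lemma \ref{reduksjonstyper} to reduce every homotopy string to one of the three types, and then Corollary \ref{allcomp}, Lemma \ref{lem:centralhomstr} and Lemma \ref{lem:uglycomp} (together with Theorem \ref{bobinskimain}(i) for the band tubes and Proposition \ref{randproposisjon} for the r- and s-components) to identify the resulting components. Your extra bookkeeping on disjointness and on extracting the counts $r_2$ and $s_2$ makes explicit what the paper leaves to the earlier statements, but introduces no new idea.
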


\begin{proof}
  By Lemma \ref{reduksjonstyper}, any homotopy string admissibly
  reduces to one of three types.  Hence, by Corollary \ref{allcomp},
  Lemma \ref{lem:centralhomstr} and Lemma \ref{lem:uglycomp}, the list
  in the theorem gives all AR-components.
\end{proof}

\section{Example} \label{eksempler} 

In this section, we revisit Example \ref{ex:hovedeks}. The following
notation will be used: The arrows of the quivers are denoted by small
letters (e.g. $a$, $b$), and for an arrow $a$ the inverse is denoted
by $\overline a$.

Recall that $\Lambda = kQ/I$ is a cluster-tilted algebra of type
$\widetilde{A}_{15}$, where $Q$ is the quiver in Figure \ref{16kogger}
and \[ I =\left\langle ih, gi, hg, ed, fe, df, ba, cb, ac, ts, ut, su,
  qp, rq, pr \right\rangle.\] The parameters of $Q$ are $r_1 = 2$, $r_2
= 3$, $s_1 = 4$ and $s_2 = 2$.  We will now give part of the
AR-structure of $\kbp$.

\begin{figure}[ht]
\[
\scalebox{.75}{\begin{tikzpicture}
  \node (1) at (90:3cm) [yvertex] {$1$};
  \node (2) at (122.7:3cm) [yvertex] {$2$};
  \node (4) at (155.4:3cm) [yvertex] {$4$};
  \node (6) at (188.1:3cm) [yvertex] {$6$};
  \node (8) at (220.8:3cm) [yvertex] {$8$};
  \node (9) at (253.5:3cm) [yvertex] {$9$};
  \node (10) at (286.2:3cm) [yvertex] {$10$};
  \node (11) at (318.9:3cm) [yvertex] {$11$};
  \node (12) at (351.6:3cm) [yvertex] {$12$};
  \node (13) at (24.3:3cm) [yvertex] {$13$};
   \node (14) at (57.3:3cm) [yvertex] {$14$};
  
  \node (3) at (106.3:4.5cm) [yvertex] {$3$};
  \node (5) at (138:4.5cm) [yvertex] {$5$};
  \node (7) at (171.7:4.5cm) [yvertex] {$7$};
  \node (16) at (73.6:4.5cm) [yvertex] {$16$};
  \node (15) at (41:4.5cm) [yvertex] {$15$};
  \draw [->] (1)--(2);
  \draw [->] (2)--(3);
  \draw [->] (3)--(1);
  \draw [->] (2)--(4);
  \draw [->] (4)--(5);
  \draw [->] (5)--(2);
  \draw [->] (4)--(6);
  \draw [->] (6)--(7);
  \draw [->] (7)--(4);
  \draw [->] (6)--(8);
  \draw [->] (8)--(9);
  \draw [->] (10)--(9);
  \draw [->] (11)--(10);
  \draw [->] (12)--(11);          
  \draw [->] (13)--(12);
  \draw [->] (13)--(15);
  \draw [->] (15)--(14);
  \draw [->] (14)--(13);
  \draw [->] (14)--(16);
  \draw [->] (16)--(1);
  \draw [->] (1)--(14);
  \node [below] at (104.3:2.9cm) {$a$};
  \node [below] at (135:2.9cm) {$d$};
  \node [right] at (171.7:2.9cm) {$g$};
  \node [right] at (201.4:2.8cm) {$j$};
  \node [above] at (238:2.8cm) {$k$};
  \node [above] at (269:2.9cm) {$l$};
  \node [above] at (302:2.8cm) {$m$};
  \node [left] at (338:2.9cm) {$n$};
  \node [left] at (8:2.9cm) {$o$};
  \node [below] at (44:2.8cm) {$p$};
  \node [below] at (75:2.9cm) {$s$};
  \node [left] at (95:3.8cm) {$c$};
  \node [left] at (112:3.8cm) {$b$};
  \node [above] at (133:3.8cm) {$f$};
  \node [left] at (145:3.6cm) {$e$};
  \node [above] at (167:3.8cm) {$i$};
  \node [below] at (177:3.8cm) {$h$};
  \node [right] at (34:3.6cm) {$q$};
  \node [above] at (46:3.8cm) {$r$};
  \node [right] at (68:3.8cm) {$t$};
  \node [left] at (80:3.8cm) {$u$};
\end{tikzpicture}}
\]
\caption{The quiver $Q$.}\label{16kogger}
\end{figure}

Recall from Section \ref{sec:bastian} that the 16 vertices can be
divided into disjoint sets as follows: $A = \{3,5,7\}$, $A' =
\{15,16\}$, $B = \{2,4\}$, $B' = \{14\}$, $C = \{9\}$, $D = \{6,8\}$,
$D' = \{10,11,12,13\}$ and $F = \{1\}$.

The steps of the clockwise r-walk starting in vertex $7$ are \[ [ei, bf,
\overline{k}lmnopsc, \overline{j}, h, ei, bf, \ldots]. \] The steps of
the clockwise r-walk starting in vertex $5$ consists of the same
steps as for $7$, but deleting the first step.

By Theorem \ref{finalthm}, we know that there are two classes of
characteristic components containing string complexes, or more
precisely, one class of s-components and one of
r-components. Moreover, since neither
of $r_2$ and $s_2$ are zero, we know that both classes consists of
$\zainf$-components. For an r-component, the edge is given by
Proposition \ref{randproposisjon}. We look at the component including
the stalk complex $1_7[0]$.

For any complex on the edge of this component, the upper right
diagonal of the complex is given by Corollary
\ref{allcharacteristicstrings}. In particular, the upper right
diagonal of $1_7[0]$ is $(1_7[0], ei[-1], bfei[-2], \ldots)$.  Figure
\ref{komponent1} shows the three lower rows of the AR-component. Note
that this is the same component as in Figure \ref{edge1}.

\begin{figure}
\[
\resizebox{\linewidth}{!}{
\begin{tikzpicture}[scale=1.3]
  \node (start) at (0,0) [tvertex] {$1_7[0]$};
  \node (ei) at (1,1) [tvertex] {$ei[-1]$};
  \node (rand2) at (2,0) [tvertex] {$1_5[-1]$};
  \node (bf) at (3,1) [tvertex] {$bf[-2]$};
  \node (rand3) at (4,0) [tvertex] {$1_3[-2]$};
  \node (Klmnopsc) at (5,1) [tvertex] {$\overline{k}lmnopsc[-2]$};
  \node (K) at (6,0) [tvertex] {$\overline{k}[-2]$};
  \node (JK) at (7,1) [tvertex] {$\overline{j}\overline{k}[-2]$};
  \node (J) at (8,0) [tvertex] {$\overline{j}[-2]$};
  \node (hJ) at (9,1) [tvertex] {$h\overline{j}[-3]$};
  \node (slutt) at (10,0) [tvertex] {$1_7[-3]$};
  \node (eihJ2) at (0,2) [tvertex] {$eih\overline{J}[-1]$};
  \node (bfei) at (2,2) [tvertex] {$bfei[-2]$};
  \node (Klmnopscbf) at (4,2) [tvertex] {$\overline{k}lmnopscbf[-2]$};
  \node (JKlmnopsc) at (6,2) [tvertex] {$\overline{j} \overline{k}
    lmnopsc[-2]$};
  \node (hJK) at (8,2) [tvertex] {$h\overline{j}\overline{k}[-3]$};
  \node (eihJ) at (10,2) [tvertex] {$eih\overline{j}[-4]$};
  \draw [->] (-0.6,0.6)--(start);
  \draw [->] (start)--(ei);
  \draw [->] (ei)--(rand2);
  \draw [->] (rand2)--(bf);
  \draw [->] (bf)--(rand3);
  \draw [->] (rand3)--(Klmnopsc);
  \draw [->] (Klmnopsc)--(K);
  \draw [->] (K)--(JK);
  \draw [->] (JK)--(J);
  \draw [->] (J)--(hJ);
  \draw [->] (hJ)--(slutt);
  \draw [->] (slutt)--(10.6,0.6);
  \draw [->] (-0.6,1.4)--(eihJ2);
  \draw [->] (eihJ2)--(ei);
  \draw [->] (ei)--(bfei);
  \draw [->] (bfei)--(bf);
  \draw [->] (bf)--(Klmnopscbf);
  \draw [->] (Klmnopscbf)--(Klmnopsc);
  \draw [->] (Klmnopsc)--(JKlmnopsc);
  \draw [->] (JKlmnopsc)--(JK);
  \draw [->] (JK)--(hJK);
  \draw [->] (hJK)--(hJ);
  \draw [->] (hJ)--(eihJ);
  \draw [->] (eihJ)--(10.6,1.4);
  \draw [->] (-0.6,2.6)--(eihJ2);
  \draw [->] (eihJ2)--(0.9,2.9);
  \draw [->] (1.1,2.9)--(bfei);
  \draw [->] (bfei)--(2.9,2.9);
  \draw [->] (3.1,2.9)--(Klmnopscbf);
  \draw [->] (Klmnopscbf)--(4.9,2.9);
  \draw [->] (5.1,2.9)--(JKlmnopsc);
  \draw [->] (JKlmnopsc)--(6.9,2.9);
  \draw [->] (7.1,2.9)--(hJK);
  \draw [->] (hJK)--(8.9,2.9);
  \draw [->] (9.1,2.9)--(eihJ);
  \draw [->] (eihJ)--(10.6,2.6);
  \node [above] at (1,3) [tvertex] {$\vdots$};
  \node [above] at (3,3) [tvertex] {$\vdots$};
  \node [above] at (5,3) [tvertex] {$\vdots$};
  \node [above] at (7,3) [tvertex] {$\vdots$};
  \node [above] at (9,3) [tvertex] {$\vdots$};
  \node [right] at (10.6,1) [tvertex] {$\cdots$};
  \node [left] at (-0.6,1) [tvertex] {$\cdots$};
\end{tikzpicture}}
\] \caption{The lower rows of the AR-component containing $1_7[0]$.}\label{komponent1}
\end{figure}

Next, we give the $\zainfinf$-components, up to shift, including the
trivial homotopy strings corresponding to vertices of type $C$, $D$,
and $D'$.  This is the special $\zainfinf$-component described in
Lemma \ref{lem:uglycomp}.  The component is given in Figure
\ref{uglycomp}.

Further, by Tables \ref{omegaplus}--\ref{omegaminustrivial}, we get
that the upper right diagonals starting in the stalk complexes
$1_8^{-1}[0]$, $1_6^{-1}[0]$ and $1_{13}[0]$ consists of subsequent
steps of a counter-clockwise s-walk; the lower left diagonals
starting in $1_9^{-1}[0]$, $1_8^{-1}[0]$ and $1_6^{-1}[0]$ consists of
subsequent steps of a clockwise s-walk; the lower right diagonals
starting in $1_6^{-1}[0]$, $1_{10}[0]$, $1_{11}[0]$, $1_{12}[0]$ and
$1_{13}[0]$ are inverse steps of a clockwise r-walk; and finally
that the upper left diagonals starting in $1_9^{-1}[0]$, $1_{10}[0]$,
$1_{11}[0]$, $1_{12}[0]$ and $1_{13}[0]$ are inverse steps of a
counter-clockwise r-walk.  Using this, we have a complete
description of the AR-component.  In Figure \ref{spesialkomp}, a
part containing the stalk complexes is shown.

Some examples of homotopy bands associated with $\Lambda$ are
$\overline{s}\overline{t}\overline{u}\overline{s}\overline{p}\overline{o}\overline{n}\overline{m}\overline{l}kjgdacba$
and $\overline{s}\overline{t}\overline{u}cba$.

\begin{figure}[hbt]
\[
\resizebox{\linewidth}{!}{
\begin{tikzpicture}[scale=1.3]
  \node (9) at (0,0) [tvertex] {$1_9^{-1}[0]$};
  \node (10) at (1,1) [tvertex] {$1_{10}[0]$};
  \node (11) at (2,2) [tvertex] {$1_{11}[0]$};
  \node (12) at (3,3) [tvertex] {$1_{12}[0]$};
  \node (13) at (4,4) [tvertex] {$1_{13}[0]$};
  \node (8) at (1,-1) [tvertex] {$1_8^{-1}[0]$};
  \node (6) at (2,-2) [tvertex] {$1_6^{-1}[0]$};
  \draw [->] (9)--(10);
  \draw [->] (10)--(11);
  \draw [->] (11)--(12);
  \draw [->] (12)--(13);
  \draw [->] (9)--(8);
  \draw [->] (8)--(6);
  \node (Lk) at (2,0) [tvertex] {$\overline l k[0]$};
  \node (MLk) at (3,1) [tvertex] {$\overline{l}\overline{m} k [0]$};
  \node (NMLk) at (4,2) [tvertex]
  {$\overline{n}\overline{m}\overline{l}k[0]$};
  \node (ONMLk) at (5,3) [tvertex]
  {$\overline{o}\overline{n}\overline{m}\overline{l}k[0]$};
  \node (Lkj) at (3,-1) [tvertex] {$\overline{l}kj[0]$};
  \node (MLkj) at (4,0) [tvertex] {$\overline{m}\overline{l}kj[0]$};
  \node (NMLkj) at (5,1) [tvertex]
  {$\overline{n}\overline{m}\overline{l}kj[0]$};
  \node (ONMLkj) at (6,2) [tvertex]
  {$\overline{o}\overline{n}\overline{m}\overline{l}kj[0]$};
  \draw [->] (10)--(Lk);
  \draw [->] (11)--(MLk);
  \draw [->] (12)--(NMLk);
  \draw [->] (13)--(ONMLk);
  \draw [->] (Lk)--(Lkj);
  \draw [->] (MLk)--(MLkj);
  \draw [->] (NMLk)--(NMLkj);
  \draw [->] (ONMLk)--(ONMLkj);
  \draw [->] (8)--(Lk);
  \draw [->] (Lk)--(MLk);
  \draw [->] (MLk)--(NMLk);
  \draw [->] (NMLk)--(ONMLk);
  \draw [->] (6)--(Lkj);
  \draw [->] (Lkj)--(MLkj);
  \draw [->] (MLkj)--(NMLkj);
  \draw [->] (NMLkj)--(ONMLkj);
  \node (lmnopsc) at (-1,1) [tvertex] {$lmnopsc[0]$};
  \node (mnopsc) at (0,2) [tvertex] {$mnopsc[0]$};
  \node (nopsc) at (1,3) [tvertex] {$nopsc[0]$};
  \node (opsc) at (2,4) [tvertex] {$opsc[0]$};
  \node (psc) at (3,5) [tvertex] {$psc[0]$};
  \draw [->] (lmnopsc)--(9);
  \draw [->] (mnopsc)--(10);
  \draw [->] (nopsc)--(11);
  \draw [->] (opsc)--(12);
  \draw [->] (psc)--(13);
  \draw [->] (lmnopsc)--(mnopsc);
  \draw [->] (mnopsc)--(nopsc);
  \draw [->] (nopsc)--(opsc);
  \draw [->] (opsc)--(psc);
  \node (H) at (3,-3) [tvertex] {$\overline h[0]$};
  \node (LkjH) at (4,-2) [tvertex] {$\overline{l}kj\overline{h}[0]$};
  \node (MLkjH) at (5,-1) [tvertex] {$\overline{m}\overline{l}kj\overline{h}[0]$};
  \node (NMLkjH) at (6,0) [tvertex] {$\overline{n}\overline{m}\overline{l}kj\overline{h}[0]$};
  \node (ONMLkjH) at (7,1) [tvertex]
  {$\overline{o}\overline{n}\overline{m}\overline{l}kj\overline{h}[0]$};
  \draw [->] (6)--(H);
  \draw [->] (Lkj)--(LkjH);
  \draw [->] (MLkj)--(MLkjH);
  \draw [->] (NMLkj)--(NMLkjH);
  \draw [->] (ONMLkj)--(ONMLkjH);
  \draw [->] (H)--(LkjH);
  \draw [->] (LkjH)--(MLkjH);
  \draw [->] (MLkjH)--(NMLkjH);
  \draw [->] (NMLkjH)--(ONMLkjH);
  \node (UADGJKlmnopsc) at (-2,0) [tvertex]
  {$\overline{u}\overline{a}\overline{d}\overline{g}\overline{j}\overline{k}lmnopsc[1]$};
  \node (UADGJK) at (-1,-1) [tvertex]
  {$\overline{u}\overline{a}\overline{d}\overline{g}\overline{j}\overline{k}[1]$};
  \node (UADGJ) at (0,-2) [tvertex]
  {$\overline{u}\overline{a}\overline{d}\overline{g}\overline{j}[1]$};
  \node (UADG) at (1,-3) [tvertex]
  {$\overline{u}\overline{a}\overline{d}\overline{g}[1]$};
  \node (UADGH) at (2,-4) [tvertex]
  {$\overline{u}\overline{a}\overline{d}\overline{g}\overline{h}[1]$};
  \draw [->] (UADGJKlmnopsc)--(lmnopsc);
  \draw [->] (UADGJK)--(9);
  \draw [->] (UADGJ)--(8);
  \draw [->] (UADG)--(6);
  \draw [->] (UADGH)--(H);
  \draw [->] (UADGJKlmnopsc)--(UADGJK);
  \draw [->] (UADGJK)--(UADGJ);
  \draw [->] (UADGJ)--(UADG);
  \draw [->] (UADG)--(UADGH);
  \node (qpsc) at (4,6) [tvertex] {$qpsc[-1]$};
  \node (q) at (5,5) [tvertex] {$q[-1]$};
  \node (qONMLk) at (6,4) [tvertex]
  {$q\overline{o}\overline{n}\overline{m}\overline{l}k[-1]$};
  \node (qONMLkj) at (7,3) [tvertex]
  {$q\overline{o}\overline{n}\overline{m}\overline{l}kj[-1]$};
  \draw [->] (psc)--(qpsc);
  \draw [->] (13)--(q);
  \draw [->] (ONMLk)--(qONMLk);
  \draw [->] (ONMLkj)--(qONMLkj);
  \draw [->] (ONMLkjH)--(8,2);
  \draw [->] (qpsc)--(q);
  \draw [->] (q)--(qONMLk);
  \draw [->] (qONMLk)--(qONMLkj);
  \draw [->] (qONMLkj)--(8,2);
  \node (trq) at (6,6) [tvertex] {$trq[-2]$};
  \draw [->] (q)--(trq);
  \node (HIE) at (4,-4) [tvertex]
  {$\overline{h}\overline{i}\overline{e}[0]$};
  \draw [->] (H)--(HIE);
  \node (LkjHIE) at (5,-3) [tvertex]
  {$\overline{l}kj\overline{h}\overline{i}\overline{e}[0]$};
  \draw [->] (LkjH)--(LkjHIE);
  \draw [->] (HIE)--(LkjHIE);
  \node (mnopscbf) at (-1,3) [tvertex] {$mnopscbf[0]$};
  \draw [->] (mnopscbf)--(mnopsc);
  \draw [->] (-1.9,1.9)--(lmnopsc);
  \draw [->] (-1.9,2.1)--(mnopscbf);
  \draw [->] (-2.9,0.9)--(UADGJKlmnopsc);
  \draw [->] (0.1,3.9)--(nopsc);
  \draw [->] (mnopscbf)--(-0.1,3.9);
  \draw [->] (1.1,4.9)--(opsc);
  \draw [->] (2.1,5.9)--(psc);
  \draw [->] (3.1,6.9)--(qpsc);
  \draw [->] (qpsc)--(4.9,6.9);
  \draw [->] (5.1,6.9)--(trq);
  \draw [->] (trq)--(6.9,5.1);
  \draw [->] (qONMLk)--(6.9,4.9);
  \draw [->] (qONMLkj)--(7.9,3.9);
  \draw [->] (ONMLkjH)--(7.9,0.1);
  \draw [->] (NMLkjH)--(6.9,-0.9);
  \draw [->] (MLkjH)--(5.9,-1.9);
  \draw [->] (LkjHIE)--(5.9,-2.1);
  \draw [->] (3.1,-4.9)--(HIE);
  \draw [->] (UADGH)--(2.9,-4.9);
  \draw [->] (1.1,-4.9)--(UADGH);
  \draw [->] (0.1,-3.9)--(UADG);
  \draw [->] (-0.9,-2.9)--(UADGJ);
  \draw [->] (-1.9,-1.9)--(UADGJK);
  \draw [->] (-2.9,-0.9)--(UADGJKlmnopsc);
  \draw [->] (-1.9,3.9)--(mnopscbf);
  \draw [->] (HIE)--(4.9,-4.9);
  \draw [->] (LkjHIE)--(5.9,-3.9);
  \draw [->] (trq)--(6.9,6.9);
\end{tikzpicture}}
\] \caption{The special \zainfinf-component of $\kbp$.}\label{spesialkomp}\end{figure}

\appendix
\section{Assignments of $S$ and $T$}\label{SandT}
Recall Definition \ref{gentlealternativ} of a gentle algebra.  We fix
the functions $S,T:Q_1 \rightarrow \{-1,1\}$ for a quiver $Q$ in
$\mathcal Q_n^{\star}$. For the arrows $\alpha_i$, where $1 < i \leq
r$, we set $S\alpha_i = -1$ and $T\alpha_i = 1$. Similarly, for the
arrows $\beta_j$, we set $S\beta_j = -1$ and $T\beta_j = 1$ for $1
\leq j < s$. For any 3-cycle containing neither $\alpha_1$ nor
$\beta_s$, we assign values of $S$ and $T$ as shown in Figure
\ref{standard}, where the arrow marked $a$ is either $\alpha_i$ or
$\beta_j$ for some $1 < i \leq r$ or $1\leq j <s$ (note that
$Sa$ and $Ta$ are already taken care of by the above assignments). For
the arrow $\alpha_1$, we set $S\alpha_1 = 1 = T\alpha_1$, and for the
arrow $\beta_s$, we set $S\beta_s = -1 = T\beta_s$. In the case where
$\alpha_1$, respectively $\beta_s$, is part of a 3-cycle, the values
of $S$ and $T$ for the remaining arrows of the 3-cycle are shown in
Figures \ref{alpha1} and \ref{betas}, respectively.

\begin{figure}[h]
  \centering
  \begin{subfigure}[b]{0.4\textwidth}
    \centering
    \resizebox{!}{1.85cm}{
    \begin{tikzpicture}
      \node (1) at (-1,0) [nvertex] {};
      \node (2) at (1,0) [nvertex] {};
      \node (3) at (0,1.47) [nvertex] {};
      \draw [->] (1)--(2);
      \draw [->] (2)--(3);
      \draw [->] (3)--(1);
      \node[above] at (0,0) {\small $a$};
      \node[below] at (-1.2,0) {\scriptsize $Sa=-1$};
      \node[below] at (1,0) {\scriptsize $1=Ta$};
      \node[left] at (0.6,0.6) {\small $b$};
      \node[right] at (0.8,0.3) {\scriptsize $1=Sb$};
      \node[right] at (0.2,1.2) {\scriptsize $1=Tb$};
      \node[right] at (-0.6,0.6) {\small $c$};
      \node[left] at (-0.2,1.2) {\scriptsize $Sc = 1$};
      \node[left] at (-0.8,0.3) {\scriptsize $Tc = -1$};
\end{tikzpicture}}
    \caption{}
    \label{standard}
  \end{subfigure}
  \begin{subfigure}[b]{0.4\textwidth}
    \centering
    \resizebox{!}{1.85cm}{
    \begin{tikzpicture}
      \node (1) at (-1,0) [nvertex] {};
      \node (2) at (1,0) [nvertex] {};
      \node (3) at (0,1.47) [nvertex] {};
      \draw [->] (1)--(2);
      \draw [->] (2)--(3);
      \draw [->] (3)--(1);
      \node[above] at (0,0) {\small $\alpha_1$};
      \node[below] at (-1.2,0) {\scriptsize $S\alpha_1=1$};
      \node[below] at (1,0) {\scriptsize $1=T\alpha_1$};
      \node[left] at (0.6,0.6) {\small $\gamma_2$};
      \node[right] at (0.8,0.3) {\scriptsize $1=S\gamma_2$};
      \node[right] at (0.2,1.2) {\scriptsize $1=T\gamma_2$};
      \node[right] at (-0.6,0.6) {\small $\gamma_1$};
      \node[left] at (-0.2,1.2) {\scriptsize $S\gamma_1 = 1$};
      \node[left] at (-0.8,0.3) {\scriptsize $T\gamma_1 = 1$};
\end{tikzpicture}}
    \caption{}
    \label{alpha1}
  \end{subfigure}
  \begin{subfigure}[b]{0.4\textwidth}
    \centering
    \resizebox{!}{1.85cm}{
    \begin{tikzpicture}
      \node (1) at (-1,0) [nvertex] {};
      \node (2) at (1,0) [nvertex] {};
      \node (3) at (0,1.47) [nvertex] {};
      \draw [->] (1)--(2);
      \draw [->] (2)--(3);
      \draw [->] (3)--(1);
      \node[above] at (0,0) {\small $\beta_s$};
      \node[below] at (-1.2,0) {\scriptsize $S\beta_s=-1$};
      \node[below] at (1,0) {\scriptsize $-1=T\beta_s$};
      \node[right] at (0.5,0.7) {\small $\delta_{2s}$};
      \node[right] at (0.8,0.3) {\scriptsize $-1=S\delta_{2s}$};
      \node[right] at (0.2,1.2) {\scriptsize $1=T\delta_{2s}$};
      \node[left] at (-0.5,0.7) {\small $\delta_{2s-1}$};
      \node[left] at (-0.2,1.2) {\scriptsize $S\delta_{2s-1} = 1$};
      \node[left] at (-0.8,0.3) {\scriptsize $T\delta_{2s-1} = -1$};
\end{tikzpicture}}
    \caption{}
    \label{betas}
  \end{subfigure}
  \caption{Assignment of $S$ and $T$ to the arrows of a
    3-cycle.}\label{ST}
\end{figure}

\begin{lemma}\label{stringfunctions}
If $\Lambda \cong kQ/I$ is a cluster-tilted algebra of type
$\ant$, then the above assignments of $S$ and $T$ to the quiver $Q$ in $\mathcal
Q_n^{\star}$ satisfies the conditions given in Definition \ref{gentlealternativ}.
\end{lemma}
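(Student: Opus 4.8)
The plan is to verify directly that the functions $S,T\colon Q_1\to\{-1,1\}$ fixed in Appendix \ref{SandT} satisfy conditions (a)--(d) of Definition \ref{gentlealternativ}, by a finite case analysis over the vertices and $3$-cycles of a quiver $Q$ in $\mathcal{Q}_n^{\star}$. Two observations make the bookkeeping manageable. First, as is visible from the shape of $Q$ in Figure \ref{normalform}, at every vertex at most two arrows start and at most two end, so conditions (a) and (b) only need to be checked at a vertex where two arrows start, respectively two end. Second, since $I$ is generated by the length-two subpaths of the $3$-cycles of $Q$, for arrows $\alpha,\beta$ with $s\alpha=t\beta$ we have $\alpha\beta\in I$ exactly when $\alpha$ and $\beta$ are consecutive arrows of one $3$-cycle, and $\alpha\beta\notin I$ otherwise. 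So the verification of (c) and (d) splits into an ``inside a $3$-cycle'' part and a ``gluing'' part.

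For the in-cycle part, I would fix a $3$-cycle $v_1\xrightarrow{a}v_2\xrightarrow{b}v_3\xrightarrow{c}v_1$. Its three length-two subpaths $ba$, $cb$, $ac$ all lie in $I$, so (d) requires precisely $Ta=Sb$, $Tb=Sc$ and $Tc=Sa$. If the cycle contains neither $\alpha_1$ nor $\beta_s$, the values are those of Figure \ref{standard} (with $a$ the arrow $\alpha_i$ or $\beta_j$), namely $(Sa,Ta,Sb,Tb,Sc,Tc)=(-1,1,1,1,1,-1)$, so the three equalities are immediate; if the cycle contains $\alpha_1$ or $\beta_s$, one reads the same three equalities off Figure \ref{alpha1} or Figure \ref{betas}. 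In the same step I would record, for each of $v_1,v_2,v_3$, the sign that (a)/(b) forces on any further arrow incident there: at $v_1$ the outgoing $a$ and incoming $c$ have $Sa,Tc\in\{-1\}$ (generically), at $v_2$ one has $Ta=Sb=1$, at $v_3$ one has $Tb=Sc=1$, with the analogous data in the two exceptional cycles. This both disposes of (a)/(b)/(c)/(d) at vertices of type $A$ and $A'$ (valency-two apexes of a single cycle) and sets up the next step.

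The remaining task is the gluing at the vertices where two $3$-cycles, or a $3$-cycle and a path arrow among $\alpha_{r_2+1},\dots,\alpha_r$, $\beta_{s_2+1},\dots,\beta_s$, meet, together with the interior vertices of those path arrows: by Section \ref{sec:bastian} these are exactly the vertices of type $B$, $B'$, $C$, $\widetilde C$, $D$, $D'$, $F$ and $F'$. For each such vertex type I would list its incident arrows with their directions from Figure \ref{normalform}, read off their $S,T$ from the rules of Appendix \ref{SandT}, check (a)/(b) for the (at most one) pair of arrows with the same orientation at the vertex, and check (c) for each incoming--outgoing pair that is not a subpath of a $3$-cycle. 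For instance, at $F$ the two incoming arrows are the ``$c$'' of the first r-cycle and the ``$c$'' of the first s-cycle, with $T$-values $1$ and $-1$, giving (b); the outgoing arrows $\alpha_1$, $\beta_1$ have $S$-values $1$ and $-1$, giving (a); and (c) relates $S\alpha_1=1$ to $-T$ of the incoming ``$c$'' of the s-cycle, which is $-(-1)=1$. Running through the finitely many vertex types in this way completes the check.

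The hard part is not any single verification --- each reduces to a one-line comparison of two signs --- but the volume and precision of the incidence bookkeeping: one must correctly read off from Figure \ref{normalform}, in every parameter regime of Table \ref{parameterverdier} (including the degenerate ones, such as $r_2=1$), which arrows are incident to each vertex and in which direction, and in particular keep straight which of $\gamma_{2i-1},\gamma_{2i}$ (respectively $\delta_{2j-1},\delta_{2j}$) plays the role of $b$ and which the role of $c$ in Figure \ref{standard}, so that the special signs imposed on $\alpha_1$, $\beta_s$ and their cycle-partners in Figures \ref{alpha1} and \ref{betas} mesh with the generic rule at the vertices $F$, $F'$ and $C$. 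Once this incidence data is tabulated the lemma follows, and what it records is exactly that the choices of $S$ and $T$ in Appendix \ref{SandT} were made \emph{consistently} across all these incidences.
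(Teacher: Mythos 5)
Your proposal is correct and is exactly the argument the paper has in mind: the paper dismisses the proof as a straightforward direct verification, and your finite case analysis (checking (d) inside each $3$-cycle via Figures \ref{standard}, \ref{alpha1}, \ref{betas}, then checking (a)--(c) at the gluing vertices of types $B$, $B'$, $C$, $\widetilde C$, $D$, $D'$, $F$, $F'$, using that $I$ consists precisely of the length-two subpaths of $3$-cycles) is that verification spelled out. Your sample computations at $F$ and the sign data you record at the cycle vertices are accurate, so nothing is missing.
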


The proof is straight-forward. Note that this assignment of $S$ and
$T$ is not unique. A different assignment of the functions will still
give the same final result, but to give examples and technical
results in an unequivocally manner, we need to fix explicitly given
functions.

\section{The longest antipath $\theta_{x,\varepsilon}$}
An \emph{antipath} in a gentle algebra is either a trivial homotopy
string, or a direct homotopy string $\rho$ such that for any two
consecutive arrows $\alpha$ and $\beta$ in $\rho$, we have that their
composition is a relation.  Bobi\'nski defines, for each vertex $x \in
Q_0$ and each $\varepsilon \in \{-1,1\}$, the set
$\Theta_{x,\varepsilon}$ consisting of all antipaths $\theta$ such
that $t\theta = x$ and $T\theta = \varepsilon$.  Moreover, he defines
$\theta_{x,\varepsilon}$ to be the maximal element of
$\Theta_{x,\varepsilon}$ if such element exists; otherwise, he defines
$\theta_{x,\varepsilon} = \emptyset$.

We will now consider the possible values of $\theta_{x,\varepsilon}$
when $Q$ is a quiver in $\mathcal Q_n^{\star}$.  The cases where
$\theta_{x,\varepsilon}$ is not the empty string are the following:
\begin{itemize}
\item $\theta_{A_i,-1} = 1_{A_i}$ for $1 \leq i \leq r_2$,
\item $\theta_{C,1}=\alpha_r$ when $r_1 > 0$,
\item $\theta_{C,-1}=\beta_s$ when $s_1 > 0$,
\item $\theta_{D_i,1}=\alpha_{i+r_2}$ for $1 \leq i \leq r_1-1$,
\item $\theta_{D_i,-1}=1_{D_i}$ for $0 \leq i \leq r_1-1$ and $r_2 >
  0$,
\item $\theta_{D_i,-1}=1_{D_i}$ for $1 \leq i \leq r_1-1$ and $r_2 = 0$,
\item $\theta_{D'_j,1}=\beta_{j+s_2}$ for $1 \leq j \leq s_1-1$,
\item $\theta_{D'_j,-1}=1_{D'_i}$ for $0 \leq j \leq s_1-1$.
\end{itemize}
For the rest of the $\theta_{x,\varepsilon}$'s, the set
$\Theta_{x,\varepsilon}$ is infinite with no maximal element. This is
the case when there exists an arrow $\alpha$ which is part of a
3-cycle and such that $t\alpha = x$ and $T\alpha = \varepsilon$.

\section{Proof of Proposition
  \ref{tabellproposisjon}} \label{bobinskiappendix} 

We now give a proof of Proposition \ref{tabellproposisjon} in section
\ref{almostsplittriangels}. In this proof we will use the algorithm
presented by Bobi\'{n}ski in ~\cite[Section 6]{bob}, which we will now
recall.  Let $\omega=\alpha_l \cdots \alpha_1$ be a homotopy string
with homotopy partition $\omega = \sigma_L \cdots \sigma_1$.  Note
that our convention of labeling the letters and homotopy letters is
opposite of the convention used in \cite{bob}.

We now state Bobi\'nski's algorithm for finding $\omega^+$. If
$\alpha_l$ is a direct letter, let $\rho(\omega)$ denote the maximal
integer $i \in [1,l]$ such that the $i$ last letters of $\omega$, that
is $\alpha_l \cdots \alpha_{l-i+1}$, is an antipath.  If $\alpha_l$ is
an inverse letter or $\omega$ is a trivial homotopy string, we let
$\rho(\omega) = 0$.  Next, we define the substring $\omega'$ of
$\omega$ to be
\[
\omega' = 
\begin{cases}
  \omega & \text{if $\rho(\omega)= 0$} \\
  1_{s\omega}^{S\omega} & \text{if $\rho(\omega) = l$}\\
  \alpha_{l-\rho(\omega)}\cdots \alpha_1 & \text{if $0<\rho(\omega)<l$}\\
\end{cases}
\]
Note that $\omega'$ is obtained by removing the longest possible
antipath from the end of $\omega$, and that $\rho(\omega)$ denotes the
number of letters removed.

Let $\sigma$ denote the maximal path of $Q$ with no subpath in $I$
such that $\sigma\cdot \omega$ is defined as composition of homotopy
strings.  Now, we have 6 cases for computing $\omega^+$, as listed
below:

\[
\omega^+ = \left\{ 
{\renewcommand{\arraystretch}{1.4}%
\begin{array}{lll}
\theta^{-1}_{t\sigma,-T\sigma}\sigma \omega & l(\sigma) > 0 & (1) \\
\theta^{-1}_{t\omega',-T\omega'}\omega' & l(\sigma) = 0\text{, }~ l(\theta_{t\omega',
  -T\omega'}) > 0\text{, and }~ l(\omega') > 0 & (2) \\
({}^{[1]}\theta_{t\omega',-T\omega'})^{-1} & l(\sigma) = 0\text{, }~ l(\theta_{t\omega',
  -T\omega'}) > 0\text{, and }~ l(\omega') = 0 &  (3) \\
{}^{[1]}\omega' & l(\sigma) = 0\text{, }~ l(\theta_{t\omega',
  -T\omega'}) = 0{, }~ l(\omega') > 0\text{,} & \\ 
& \text{and } \alpha_{l(\omega')}(\omega') \text{ is an inverse arrow} & (4) \\
\omega' & l(\sigma) = 0\text{, }~ l(\theta_{t\omega',
  -T\omega'}) = 0{, }~ l(\omega') > 0\text{,} & \\
& \text{and } \alpha_{l(\omega')}(\omega') \text{ is an arrow} & (5)\\
\emptyset & l(\sigma) = 0\text{, }~ l(\theta_{t\omega',
  -T\omega'}) = 0\text{, and }~ l(\omega') = 0 & (6)
\end{array}}\right.
\]

Finally, for an integer $m$, we compute
\[
m'(\omega) = \left\{ \begin{array}{ll}
    l(\theta_{t\sigma,-T\sigma}) - 1 & l(\sigma) > 0 \\
    l(\theta_{t\omega',-T\omega'}) + \rho(\omega) -1 & l(\sigma) = 0 
    \end{array}\right.
\]
and this completely describes how we find the pair
$(m'(\omega),\omega^+)$ from the pair $(m, \omega)$.  This concludes
Bobi\'nski's algorithm. We will now state again Proposition
\ref{tabellproposisjon} and then use the above algorithm to give the
proof.

\begin{proposition}Let $\Lambda$ be a cluster-tilted algebra of type
  $\ant$, and let $\omega[m]$ be a string complex in $\kbp$. The
  middle term $\omega^+[m+m'(\omega)]$ in the AR-triangle starting in
  $\omega$ is given by the entries in Tables \ref{omegaplus} and
  \ref{omegaplustrivial}. The middle term $\omega_-[m-m'(\omega_-)]$
  in the AR-triangle ending in $\omega$ is given by the entries in
  Tables \ref{omegaminus} and \ref{omegaminustrivial}.
\end{proposition}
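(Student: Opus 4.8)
The plan is to prove the proposition by a direct, exhaustive run of Bobiński's algorithm recalled above, organised as a case distinction on the last letter $\alpha_l(\omega)$ when $\omega$ is non-trivial and non-empty, and on the type of the vertex $x$ when $\omega=1_x^{\pm1}$. By Lemma~\ref{omegaminuslemma} we have $\omega_+=((\omega^{-1})^+)^{-1}$ and $\omega^-=((\omega^{-1})_-)^{-1}$, so it suffices to establish the entries for $\omega^+$ (Tables~\ref{omegaplus} and~\ref{omegaplustrivial}) together with the entries for $\omega_-$ (Tables~\ref{omegaminus} and~\ref{omegaminustrivial}); the $\omega_+$- and $\omega^-$-entries then follow by applying the first two tables to the inverted strings and inverting the output.

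Two features special to a quiver $Q$ in $\mathcal Q_n^\star$ (as in Figure~\ref{normalform}, see Section~\ref{sec:bastian}) turn the algorithm into something completely explicit. First, every relation of $\Lambda$ is the composition of two arrows inside a $3$-cycle; consequently a relation-avoiding path of $Q$ can traverse at most a bounded initial segment of any $3$-cycle before being forced to leave it, so the maximal relation-avoiding path $\sigma$ that can be pre-composed with $\omega$ (case $(1)$ of the algorithm) is short and is obtained by tracing $Q$ from $t(\omega)$ in the direction dictated by the $S$-value of $\alpha_l(\omega)$ — this is precisely how the steps $cw\_r$, $ccw\_s$, $cw\_s$, $ccw\_r$ and their prefixes were defined, which accounts for all table entries of the form ``$cw\_r(t\omega)\cdot\omega$'', ``$ccw\_s(t\omega)\cdot\omega$'', and so on. Second, the longest antipaths $\theta_{x,\varepsilon}$ of $\Lambda$ are non-empty only in the finitely many cases enumerated in the preceding appendix, and there they have length $0$ or $1$; this is exactly what makes cases $(2)$--$(6)$ of the algorithm collapse either to the reductions of Definition~\ref{reduction1} or to $\emptyset$, and, after substituting $l(\theta_{t\sigma,-T\sigma})-1$ or $l(\theta_{t\omega',-T\omega'})+\rho(\omega)-1$, it produces the values $-1$, $0$, $\phi(r_1)$, $\phi(s_1)$ in the $m'(\omega)$-column. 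With $S,T$ fixed as in Appendix~\ref{SandT}, the trivial-string Table~\ref{omegaplustrivial} is handled the same way, the only additional bookkeeping being to record which trivial string $1_{s\omega}^{\varepsilon}$ the reduction lands on.

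For the $\omega_-$-tables there is no separate algorithm to invoke, so the plan is to verify each entry indirectly. From the mesh structure of Figure~\ref{omeganeighbours}, $\omega_-$ is, up to a degree shift that the same computation pins down, the unique homotopy string $\nu$ with $\nu^+=\omega$ (uniqueness because $\nu\mapsto\nu^+$ is injective on any $\zainf$-, $\zainfinf$- or $\mathbb{Z}A_n$-component). Hence, for each row of Table~\ref{omegaminus} (resp.\ Table~\ref{omegaminustrivial}), one takes the string $\nu$ proposed as $\omega_-$, applies the already-verified $\omega^+$-algorithm to $\nu$, and checks that the output is $\omega$ with compatible degree; this simultaneously determines $m'(\omega_-)$, and, via the sentence preceding the proposition, the integer $m''$. (Equivalently one can run the mirror-image version of the algorithm, reading the antipath and the extension path from the initial letter of $\omega$ instead of from the last one, or pass to the opposite algebra, which interchanges the clockwise and counter-clockwise walks; both routes yield the same tables.)

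I expect the main obstacle to be the sheer volume and the delicacy of the case analysis rather than any single idea. The genuinely delicate points are: (i) computing $\rho(\omega)$ correctly when $\alpha_l(\omega)$ lies on a $3$-cycle, so that the antipath suffix removed is neither too long nor too short — this is controlled exactly by the explicit $\theta$'s; (ii) in the rows whose output is a ``reduction'', identifying Bobiński's ${}^{[1]}\omega'$ with the relevant clockwise or counter-clockwise r- or s-reduction of Definition~\ref{reduction1}, i.e.\ checking that removing the longest antipath suffix and then the leading homotopy letter coincides with deleting the last step of the corresponding $Q$-walk; and (iii) in the trivial-string rows for $x$ of type $C$ and $D_0'$, tracking the parameter-dependent branches ($r_1=0$ versus $r_1>0$, $s_1=0$ versus $s_1>0$, $s_2=0$), which is the source of the function $\phi$ and of the special $m'$-values in Table~\ref{omegaplustrivial}. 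None of these is conceptually hard once the explicit data of the two preceding appendices are in hand; carrying them out for every letter type and every vertex type is the content of this appendix.
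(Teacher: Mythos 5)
Your plan for the $\omega^+$ half of the statement is essentially the paper's own proof: fix $S,T$ and the antipaths $\theta_{x,\varepsilon}$ as in the appendices, run Bobi\'nski's algorithm case by case on the last letter of $\omega$ (respectively on the vertex type for trivial strings), observe that the maximal composable path $\sigma$ is exactly a step or prefix of the appropriate $Q$-walk and that the relevant $\theta$'s have length $0$ or $1$, and read off $\omega^+$ and $m'(\omega)$; the reduction of $\omega_+,\omega^-$ to $\omega^+,\omega_-$ via Lemma \ref{omegaminuslemma} is also how the tables are organized. On this half there is nothing to object to beyond the (acknowledged) volume of cases.

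The step I would not accept as written is your derivation of the $\omega_-$ entries. You verify that the string $\nu$ proposed in Tables \ref{omegaminus} and \ref{omegaminustrivial} satisfies $\nu^+=\omega$ and then conclude $\nu=\omega_-$ because ``$\nu\mapsto\nu^+$ is injective on any $\zainf$-, $\zainfinf$- or $\mathbb{Z}A_n$-component''. That injectivity is precisely what needs proof: a priori the other predecessor $\omega^-$ of $\omega$ in the mesh could also have $(\omega^-)^+=\omega$, and the shape of the component (Riedtmann's theorem, the only component-theoretic input available here) does not decide which predecessor carries the label $\omega_-$; moreover the classification of the components of $\kbp$ for this $\Lambda$ is established later \emph{using} this proposition, so invoking it here is circular. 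The paper closes the gap by arguing in the opposite direction: for each row it enumerates, from the already-verified Tables \ref{omegaplus} and \ref{omegaplustrivial}, \emph{all} strings $\widetilde\omega$ whose $\widetilde\omega^{\,+}$ can end in the prescribed last letter of $\omega$, checks that in every such case $\widetilde\omega^{\,+}$ is obtained by prepending a step of the relevant $Q$-walk, and concludes that every $\widetilde\omega$ with $\widetilde\omega^{\,+}=\omega$ coincides with the predicted reduction of $\omega$; since $\omega_-$ is such a $\widetilde\omega$, the entry follows. Your check supplies only the existence half of this argument; to make it a proof you must add this fiber enumeration (which your verified $\omega^+$ tables already make routine), or else actually develop the mirror-image/opposite-algebra algorithm for $\omega_-$ that you mention only in passing.
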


\begin{proof}
  The proof of this proposition can be done by direct calculations, and
  thus we only include part of the calculations.  We include below the
  calculations for $\omega^+$ when $\alpha_l(\omega)=\alpha_i^{-1}$
  for $1\leq i \leq r$. There are then the following cases to
  consider:
\begin{itemize}
\item If $2\leq i\leq r_2+1$, then $\sigma=\gamma_{2i-2}$ so we are in
  the first case of the algorithm. Now
  $\theta_{t\sigma,-T\sigma}=\theta_{A_{i-1,-1}}=1_{A_i}$, thus
  $\omega^+=\gamma_{2i-2}\omega=cw\_r(t\omega)\omega$ and
  $m'(\omega)=-1$.
\item If $r_2+2\leq i\leq r$, then it is clear that $\sigma=\emptyset$
  and  $\rho(\omega)=0$ so $\omega'=\omega$. Now
  $\theta_{t\omega',-T\omega'}=\theta_{D_{i-r_2-1},1}=\alpha_{i-1}$ so
  we are in case $(2)$ of the algorithm, and
  $\omega^+=\alpha_{r_2+j-1}^{-1}\omega=cw\_r(t\omega)\omega$ and
  $m'(\omega)=0$.
\item If $i=1$ and $r_2 = 0$, then $\alpha_l(\omega) = \alpha_1^{-1}$
  and $\sigma = \beta_s \cdots \beta_1$.  Hence we are in case $(1)$
  of the algorithm, and $\theta_{t\sigma, -T\sigma} = \alpha_r$.  Then
  $\omega^+ = \alpha_r^{-1}\beta_s \cdots \beta_1 \omega =
  cw\_r(t\omega)\omega$ and $m'(\omega)=0$.
\item If $i=1$ and $r_2 > 0$, then $\alpha_l(\omega) = \alpha_1^{-1}$
  and
  \[
  \sigma = \begin{cases}
    \beta_s \cdots \beta_1 & r_1 > 0 \\
    \gamma_{2r_2} \beta_s \cdots \beta_1 & r_1 = 0 ~.
  \end{cases}
  \]
  In both cases, we are in case $(1)$ of the algorithm, and
  \[
  \theta_{t\sigma, -T\sigma} = \begin{cases}
    \alpha_r & r_1 > 0 \\
    1_{A_{r_2}} & r_1 = 0 ~.
  \end{cases}
  \]
  In both cases, we get $\omega^+ = cw\_r(t\omega)\omega$, with
  $m'(\omega) = 0$ if $r_1 > 0$ and $m'(\omega) = -1$ if $r_1 = 0$.
\end{itemize} 
The calculations for Tables \ref{omegaplus} and \ref{omegaplustrivial}
can be done similarly.

For the proof of Table \ref{omegaminus} and \ref{omegaminustrivial},
the following procedure is applied: For every entry of the table, let
$\omega$ be a homotopy string satisfying the given description of the
entry and let $\widehat{\omega}$ be the homotopy string arising by
performing the operation from the corresponding entry in the third
column (i.e. $\widehat{\omega} = \omega_-$).  Next, consider all
possible $\widetilde{\omega}$ such that $\widetilde{\omega}^+ =
\omega$ (we use Tables \ref{omegaplus} and \ref{omegaplustrivial} for
this).  Then, for each such $\widetilde{\omega}$, verify that
$\widetilde{\omega}=\widehat{\omega}$.  In this proof, we shall only
do this for the entry $\alpha_l(\omega) = \gamma_{2i}$ in Table
\ref{omegaminus}. The rest is left to the reader.

Assume that $\omega$ is a homotopy string with $\alpha_l(\omega) =
\gamma_{2i}$ for some $1 \leq i \leq r_2$. We examine Tables
\ref{omegaplus} and \ref{omegaplustrivial} to find
$\widetilde{\omega}$ such that $\widetilde{\omega}^+$ can be a
homotopy string ending with such an arrow.  The possibilities for
$\widetilde{\omega}$ are:
\begin{itemize}
\item $\alpha_l(\widetilde{\omega}) = \alpha_i$ for $1 \leq i \leq
  r_2$
\item $\alpha_l(\widetilde{\omega}) = \alpha_i^{-1}$ for $2 \leq i
  \leq r_2 + 1$
\item $\alpha_l(\widetilde{\omega}) = \gamma_{2i} $ for $2 \leq i \leq
  r_2$ or $i = 1$ and $r_1 = 0$
\item $\alpha_l(\widetilde{\omega}) = \delta_{2i-1}$ for $1 \leq i
  \leq s_2$ and $r_1 = 0$
\item $\alpha_l(\widetilde{\omega}) = \delta_{2i}^{-1}$ for $1 \leq i
  \leq s_2$ and $r_1 = 0$
\end{itemize}
It is easy to see that those are all homotopy strings such that
$\widetilde{\omega}^{+}$ is
$cw\_r(t\widetilde{\omega})\widetilde{\omega}$.  We have that
$\widehat{\omega}$ is the clockwise r-reduction of $\omega$, an since
this operation undoes adding a step of a clockwise r-walk, we will
always get $\widetilde{\omega} = \widehat{\omega}$.
\end{proof}

\bibliographystyle{plain}

\end{document}